\begin{document}

\newtheorem{theorem}{Theorem}[section]
\newtheorem{definition}[theorem]{Definition}
\newtheorem{example}[theorem]{Example}
\newtheorem{proposition}[theorem]{Proposition}
\newtheorem{lemma}[theorem]{Lemma}
\newtheorem{corollary}[theorem]{Corollary}
\newtheorem{remark}[theorem]{Remark}
\renewcommand{\theequation}{\thesection.\arabic{equation}}
\newcommand{\sezione}[1]{\section{#1}\setcounter{equation}{0}}
\def\vs{\vskip0cm\noindent}
\newcommand{\red}{\textcolor{red}}
\newcommand{\cvd}{$\hfill \sqcap \hskip-6.5pt \sqcup$} 
\newcommand{\nor}{\Arrowvert}
\newcommand{\dl}{\delta_\l}
\newcommand{\utl}{\tilde{u}_\l}
\newcommand{\vtl}{\tilde{v}_\l}
\def\R{{\rm I\mskip -3.5mu R}}
\def\N{{\rm I\mskip -3.5mu N}}
\def\Z{{\rm I\mskip -3.5mu Z}}
\def\i{i^{*}}
\def\e{\varepsilon}
\def\di12{\mathcal{D}^{1,2}(\R^n)}
\def\ra{{\rightarrow}}
\def\na{\nabla}
\def\d{\delta}
\def\g{\gamma}
\def\D{\Delta}
\def\l{{\lambda}}
\def\L{{\Lambda}}
\def\a{{\alpha}}
\def\b{{\beta}}
\def\s{{\sigma}}
\def\t{{\tau}}
\def\o{\omega}
\def\il{_{i,\l}}
\def\pe{p_{\e}}
\def\kl{_{k,\l}}
\def\0l{_{0,\l}}
\def\1l{_{1,\l}}
\def\2l{_{2,\l}}
\def\3l{_{3,\l}}
\def\4l{_{4,\l}}
\def\ul{u_\l}
\def\pel{\Phi_{\e,\l}}
\def\rl{r_\l}
\def\yt{\tilde y}
\def\at{\tilde a}
\def\rt{\tilde r}
\def\gl{\gamma_\l}
\def\xl{x_\l}
\def\kx{K(x)}
\def\Il{I_\l}
\def\ue{u^\e}
\def\uedy{u^\e_{\d,y}}
\def\pedy{\phi ^\e_{\d,y}}
\def\tedy{T ^\e_{\d,y}}
\def\wl{w_\l}
\def\zl{z_\l}
\def\xil{\xi_{\l}}
\def\de{\partial}
\def\star{\frac {2n}{n+2}}
\def\crit{\frac {2n}{n-2}}
\def\sexp{\frac {ns}{n+2s}}
\def\fe{f_{\e}}
\def\fo{f_0}
\def\Om{\Omega}
\def\lra{\longrightarrow}
\def\ra{\rightarrow}
\def\mE{\mathcal{E}}

\title[On a generalized Toda System]
 {On a general $SU(3)$ Toda System}
 \thanks{The first two authors are supported by PRIN-2009-WRJ3W7 grant and
the third author is supported by NSERC of Canada}

\author[Gladiali]{Francesca  Gladiali}
\address{Francesca Gladiali, Dipartimento Polcoming, Universit\`a  di Sassari  - Via Piandanna 4, 07100 Sassari - Italy.}
\email{fgladiali@uniss.it}
\author[Grossi]{Massimo Grossi}
\address{Massimo Grossi, Dipartimento di Matematica, Universit\`a di Roma
La Sapienza, P.le A. Moro 2 - 00185 Roma- Italy.}
\email{massimo.grossi@uniroma1.it}
\author[Wei]{Juncheng Wei}
\address{Juncheng Wei, Department of Mathematics, University of British Columbia, Vancouver, BC V6T 1Z2, Canada, and Department of Mathematics, Chinese University of Hong Kong, Shatin, NT, Hong Kong}
\email{jcwei@math.ubc.ca}

\begin{abstract}
We study the  following generalized $SU(3)$ Toda System
$$
\left\{\begin{array}{ll}
-\Delta u=2e^u+\mu e^v & \hbox{ in }\R^2\\
-\Delta v=2e^v+\mu e^u & \hbox{ in }\R^2\\
\int_{\R^2}e^u<+\infty,\ \int_{\R^2}e^v<+\infty
\end{array}\right.
$$
where $\mu>-2$. We prove the existence of radial solutions bifurcating from the radial solution $(\log \frac{64}{(2+\mu) (8+|x|^2)^2}, \log \frac{64}{ (2+\mu) (8+|x|^2)^2})$ at the values $\mu=\mu_n=2\frac{2-n-n^2}{2+n+n^2},\ n\in\N $.
\end{abstract}

\maketitle

\sezione{Introduction and statement of main results}
Let us consider the following  generalized Toda system,
\begin{equation}\label{1}
\left\{\begin{array}{ll}
-\Delta u=2e^u+\mu e^v & \hbox{ in }\R^2\\
-\Delta v=2e^v+\mu e^u & \hbox{ in }\R^2\\
\int_{\R^2}e^u<+\infty,\ \int_{\R^2}e^v<+\infty
\end{array}\right.
\end{equation}
with $\mu>-2$. This system is the limiting equation for the so-called Gudnason model in non-abelian Chern-Simons theory
\begin{equation}\label{1.2n}
\left\{\begin{array}{l} \Delta U= \alpha^2 (e^{U+V}+ e^{U-V})(e^{U+V}+ e^{U-V} -2) +\alpha \beta (e^{U+V}- e^{U-V})^2 + 4\pi \sum_{j=1}^n \delta_{p_j} (x),\\
\Delta V= \alpha \beta (e^{U+V}-e^{U-V})(e^{U+V} + e^{U-V} -2) +\beta^2 (e^{2U+2V}- e^{2U-2V}) +4\pi \sum_{j=1}^n \delta_{p_j} (x)
\end{array}
\right.
\end{equation}
where $x \in \R^2, \alpha, \beta>0$ and $n$ is an nonnegative integer.  In fact, as in \cite{ALW} and \cite{CI}, we rescale (\ref{1.2n}) as $ U(x)= \tilde{U} (\epsilon x) +2\log \epsilon, V(x)= \tilde{V} (\epsilon x)$.  Letting $\epsilon \to 0$ we obtain the following system
\begin{equation}
\label{1.2m}
\left\{\begin{array}{l}
-\Delta \tilde{U}=2 \alpha^2 (e^{\tilde{U}+\tilde{V}}+ e^{\tilde{U}-\tilde{V}})+  4n\pi \delta_0 \\
-\Delta  \tilde{V}=2 \alpha \beta (e^{\tilde{U}+\tilde{V}}-e^{\tilde{U}-\tilde{V}})+ 4n \pi \delta_0
\end{array}
\right.
\end{equation}
If $n=0$, then by suitable linear transformation (\ref{1.2m}) is equivalent to (\ref{1}) with $\mu= 2 \frac{\alpha^2-\alpha\beta}{ \alpha^2+\alpha \beta}$. Clearly it holds that $\mu>-2$. 
We refer to \cite{HLTY} for the background on Gudnason model and the references therein. \\
This problem is the natural extension, in the case of systems, of the known Liouville equation
\begin{equation}\label{i1}
\left\{\begin{array}{ll}
-\Delta u=e^u & \hbox{ in }\R^2\\
\int_{\R^2}e^u<+\infty.
\end{array}\right.
\end{equation}
All solutions of \eqref{i1} have been completely classified in \cite{CL1}.\\
Contrary to the case of \eqref{i1}, it is not available a classification result for solutions of the problem \eqref{1} and the structure of the space of the solutions is far from being fully understood. \\
{ Note that if $u=v$ then system \eqref{1} reduces to the equation
\begin{equation}\nonumber
-\Delta u=(2+\mu)e^u \quad\hbox{ in }\R^2
\end{equation}
which admits, according to \cite{CL1}, the $3$-parameter family of solutions
\begin{equation}\label{3}
U_{\mu,\delta,y}(x)=\log\frac{64\delta}{(2+\mu)\left(8\delta+|x-y|^2\right)^2},
\end{equation}
with $\delta>0$ and $y\in\R^2$ and so \eqref{1} always admits the solution $(u,v)=(U_{\mu,\delta,y}, U_{\mu,\delta,y})$ for any $\mu>-2$, $\delta>0$ and $y\in\R^2$.}\\
A first general result to \eqref{1} is given in \cite{CK} where the authors consider the problem
\begin{equation}\label{i2}
\left\{\begin{array}{ll}
-\Delta u=\g_{11}e^u+\g_{12}e^v & \hbox{ in }\R^2\\
-\Delta v=\g_{21}e^u+\g_{22}e^v & \hbox{ in }\R^2\\
\int_{\R^2}e^u<+\infty,\ \int_{\R^2}e^v<+\infty.
\end{array}\right.
\end{equation}
Under some assumption on the coefficients $\g_{ij}$ including $\g_{ij}\ge0$, $$\g_{11}+\g_{12}=1$$ and $$\g_{21}+\g_{22}=1$$ the authors prove that all solutions to \eqref{i2} are radially symmetric and of the type as in \eqref{3}.\\
Always in the case where $\g_{ij}\ge0$, in \cite{CSW} some milder assumption on $\g_{ij}$ are given in order to obtain the radial symmetry of the solutions. A complete classification of radial solutions to (\ref{1}) in the case of $\mu>0$ can be found in \cite{LZ1, LZ2}.\\
On the other hand, if some of $\g_{ij}$ are negative,  we will see that the system can have {both radial and }nonradial solutions.\\
In this setting, the most studied case is that of the Cartan matrix, where $\g_{ij}$, $i,j=1,2$ are given by
\begin{equation}\label{i3}
\begin{pmatrix}
2&-1\\
-1&2
\end{pmatrix}.
\end{equation}
In this case Jost and Wang (see \cite{JW2}) provided the following beautiful classification result,\\
{\bf Theorem} (see \cite{JW2}).\\
{\em All solutions of the problem
\begin{equation}\label{i4}
\left\{\begin{array}{ll}
-\Delta u=2e^u-e^v & \hbox{ in }\R^2\\
-\Delta v=2e^v-e^u & \hbox{ in }\R^2\\
\int_{\R^2}e^u<+\infty,\ \int_{\R^2}e^v<+\infty
\end{array}\right.
\end{equation}
are given by (in complex notation)
\begin{equation}\label{i5}
\left\{\begin{array}{ll}
u(z)=\log\frac{4\left(a_1^2a_2^2+a_1^2|2z+c|^2+a_2^2|z^2+2bz+bc-d|^2\right)}
{\left(a_1^2+a_2^2|z+b|^2+|z^2+cz+d|^2\right)^2}\\
v(z)=\log\frac{16a_1^2a_2^2\left(a_1^2+a_2^2|z+b|^2+|z^2+cz+d|^2\right)}
{\left(a_1^2a_2^2+a_1^2|2z+c|^2+a_2^2|z^2+2bz+bc-d|^2\right)^2}
\end{array}\right.
\end{equation}
for some real constants $a_1,a_2>0$ and $b,c,d\in\mathbb{C}$. (See also \cite{WZZ}.) }\\[.4cm]
\noindent This shows that when some of $\g_{ij}'s$ are negative system \eqref{i2} can have {both radial and} nonradial solutions.\\
This result was extended when in \eqref{i4} appear singular sources (see\cite{LWY}).\\
The purpose of this paper is to investigate more general cases than those of the Cartan matrix and to understand for which matrices $\g_{ij}$ can be expected results of existence of solutions. So we will consider system \eqref{1} with $\mu<0$.\\
Note that variational methods seem difficult to apply because of the lack of compactness of the domain. Even the choice of a suitable functional space is quite unclear. For this reason we use the {\em bifurcation theory}.\\
{Let us denote by
\begin{equation}\label{4}
U_\mu(x)=U_{\mu,1,O}(x)=\log\frac{64}{(2+\mu)\left(8+|x|^2\right)^2}.
\end{equation}}
A first step in our proof is the {\em linearization }of the system \eqref{1} at the trivial solution $u=v=
U_\mu$. We have the following
\begin{lemma}\label{6}
Let us suppose that $(w_1,w_2)\ne(0,0)$ is a solution to
\begin{equation}\label{5}
\left\{\begin{array}{ll}
-\Delta w_1=2e^{U_\mu} w_1+\mu e^{U_\mu} w_2 & \hbox{ in }\R^2\\
-\Delta w_2=2e^{U_\mu} w_2+\mu e^{U_\mu} w_1 & \hbox{ in }\R^2
\end{array}\right.
\end{equation}
Set
\begin{equation}\label{a1}
\mu_n=2\frac{2-n-n^2}{2+n+n^2}\quad n\in\N \cup\{0\},\end{equation}
We have the following alternative,\\
$i)$ If $\mu>-2$ but $\mu\ne\mu_n$ then all the bounded solutions to \eqref{5} are given, in radial coordinates, by
\begin{equation}\label{a2}
(w_1,w_2)=\left(a_1\frac{rcos\theta}{8+r^2}+a_2\frac{r\sin\theta}{8+r^2}+a_3\frac{8-r^2}{8+r^2},a_1\frac{rcos\theta}{8+r^2}+a_2\frac{r\sin\theta}{8+r^2}+a_3\frac{8-r^2}{8+r^2}\right)
\end{equation}
for some real constants $a_1,a_2,a_3$.\\
$ii)$ if $\mu=\mu_n$ then all the bounded solutions to \eqref{5} are given, in radial coordinates, by
\begin{eqnarray}\label{a3}
&&(w_1,w_2)=\left(
\sum\limits_{m=0}^n \left[A_m\cos(m\theta)+B_m\sin(m\theta)\right]P^m_n\left(\frac{8-r^2}{8+r^2}\right)+a_1\frac{rcos\theta}{8+r^2}\right.\nonumber\\
&&\,\,\,\,\,\,\,+a_2\frac{r\sin\theta}{8+r^2}+a_3\frac{8-r^2}{8+r^2} \,\,,\,\,
-\sum\limits_{m=0}^n \left[A_m\cos(m\theta)+B_m\sin(m\theta)\right]P^m_n\left(\frac{8-r^2}{8+r^2}\right)\nonumber\\
&&\left.\,\,\,\,\,\,\,+a_1\frac{rcos\theta}{8+r^2}+a_2\frac{r\sin\theta}{8+r^2}+a_3\frac{8-r^2}{8+r^2}\right)
\end{eqnarray}
for some real constants $a_1,a_2,a_3$, $A_m$, $m=0,\dots,n$ and $B_m$, $m=1,\dots,n$, where
\begin{equation}\label{a4}
P_n^m(r)=(-1)^m(1-r^2)^\frac m2 \frac {d^m}{dr^m}P_n(r)
\end{equation}
and $P_n(r)$ are the classical Legendre polynomials defined as
\begin{equation}\label{a5}
P_n(r)=\frac1{2^nn!} \frac {d^n}{dr^n}(r^2-1)^n.
\end{equation}
\end{lemma}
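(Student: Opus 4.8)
The plan is to diagonalize the system \eqref{5} and reduce everything to the spectral analysis of a single scalar equation. Setting $w_\pm:=w_1\pm w_2$ and adding, respectively subtracting, the two equations in \eqref{5} gives the decoupled problems
\[
-\Delta w_+=(2+\mu)e^{U_\mu}w_+,\qquad -\Delta w_-=(2-\mu)e^{U_\mu}w_-\qquad\text{in }\R^2.
\]
Since $e^{U_\mu}=\frac{64}{(2+\mu)(8+|x|^2)^2}$, the coefficient of the first equation is exactly $V(x):=\frac{64}{(8+|x|^2)^2}$, independently of $\mu$, while that of the second one is $\nu V(x)$ with $\nu:=\frac{2-\mu}{2+\mu}$. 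By elliptic regularity a bounded solution is smooth, so one may expand it in Fourier series in the angular variable $\theta$; since the full solution is bounded, each Fourier coefficient is a bounded solution of the corresponding radial ODE, and it suffices to classify, for $c\in\R$ and each integer $m\ge0$, the solutions $\phi=\phi(r)$ of $-\phi''-\frac1r\phi'+\frac{m^2}{r^2}\phi=cV(r)\phi$ on $(0,\infty)$ that are bounded at $r=0$ and as $r\to+\infty$.

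The main step is to treat this radial problem through the change of variable $t=\frac{8-r^2}{8+r^2}\in(-1,1]$, which is the stereographic projection of $\R^2\cup\{\infty\}$ onto a round sphere (under it $V\,dx^2$ is a round metric and $V^{-1}\Delta_{\R^2}$ becomes a constant multiple of $\Delta_{S^2}$). Writing $\phi(r)=y(t)$, a computation turns the ODE into the associated Legendre equation
\[
(1-t^2)y''-2ty'+\Bigl(\Lambda-\tfrac{m^2}{1-t^2}\Bigr)y=0,\qquad \Lambda=2c.
\]
By the classical theory of this equation, the solution bounded at $t=1$ (the constraint coming from boundedness of $\phi$ at $r=0$) stays bounded at $t=-1$ (i.e.\ as $r\to+\infty$) if and only if $\Lambda=n(n+1)$ for some integer $n\ge m$, in which case it is a multiple of $P^m_n(t)$; for every other value of $\Lambda$ the only bounded solution is $\phi\equiv0$. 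Applied to $w_+$ (where $c=1$, hence $\Lambda=2=1\cdot2$, so $n=1$ and only $m=0,1$ survive) this shows that $w_+$ always lies in the three–dimensional space spanned by $P_1^0(t)=\frac{8-r^2}{8+r^2}$ and $P_1^1(t)\cos\theta$, $P_1^1(t)\sin\theta$, that is by $\frac{8-r^2}{8+r^2}$, $\frac{r\cos\theta}{8+r^2}$, $\frac{r\sin\theta}{8+r^2}$ — the kernel of the linearized Liouville operator. Applied to $w_-$ (where $c=\nu$, hence $\Lambda=\frac{2(2-\mu)}{2+\mu}$) it gives: if $\Lambda\ne n(n+1)$ for all $n$, equivalently $\mu\ne\mu_n$, then $w_-\equiv0$; whereas if $\mu=\mu_n$ then $\Lambda=n(n+1)$ and $w_-$ is an arbitrary $n$-th degree spherical harmonic, $w_-=\sum_{m=0}^n[A_m\cos(m\theta)+B_m\sin(m\theta)]\,P_n^m\bigl(\frac{8-r^2}{8+r^2}\bigr)$ (note $\mu_n+2=\frac{8}{2+n+n^2}>0$, so all these $\mu_n$ are admissible).

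To conclude, one recovers $w_1=\tfrac12(w_++w_-)$ and $w_2=\tfrac12(w_+-w_-)$ and relabels the constants, obtaining \eqref{a2} when $\mu\ne\mu_n$ and \eqref{a3} when $\mu=\mu_n$. The diagonalization and this final reconstruction are routine; I expect the real work to be in the middle paragraph, namely (a) verifying that the change of variable $t=\frac{8-r^2}{8+r^2}$ indeed produces the associated Legendre operator with parameter $\Lambda=2c$ — either by the direct, somewhat lengthy computation or via the conformal/stereographic description — and (b) establishing the dichotomy for the associated Legendre equation, i.e.\ that boundedness at both endpoints forces the quantization $\Lambda=n(n+1)$ and singles out $P^m_n$, together with the bookkeeping that for $\Lambda=n(n+1)$ exactly the modes $m=0,\dots,n$ contribute, which is why the sum in \eqref{a3} terminates at $m=n$.
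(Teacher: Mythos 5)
Your proposal is correct and follows essentially the same route as the paper: decompose into $w_1+w_2$ and $w_1-w_2$, observe that the sum always solves the linearized Liouville equation (giving the three-dimensional kernel for $n=1$), and reduce the difference to the scalar eigenvalue problem $-\Delta z=\frac{2-\mu}{2+\mu}\frac{64}{(8+|x|^2)^2}z$, whose quantization $\frac{2-\mu}{2+\mu}=\frac{n(n+1)}{2}$ yields exactly the values $\mu_n$ and the associated Legendre eigenfunctions. The only difference is that the paper simply cites the spectral classification from \cite{GG} (Theorem 11.1), whereas you sketch its proof via the substitution $t=\frac{8-|x|^2}{8+|x|^2}$ and the classical theory of the associated Legendre equation, which is a legitimate (and self-contained) way to obtain the same dichotomy.
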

\noindent From the previous lemma we see that the linearized operator \eqref{5} has two types of degeneracies. The first one, which holds for every $\mu$, is due to the invariance of the problem \eqref{1} for dilations and translations, while the second one appears only at the values $\mu_n$ defined in \eqref{a1}.\\
In Section \ref{s3} we will show that this second type of degeneracy is due to the existence of other solutions  of \eqref{1} which bifurcate from the solution $(U_{\mu},U_{\mu})$ for $\mu=\mu_n$ and $n\in \N$.\\
This is our main result,
\begin{theorem}\label{i6}
Let $n\in\N$ and $\mu_n$ be as defined in \eqref{a1}. The points $(\mu_n,U_{\mu_{n}},U_{\mu_{n}})$ are radial  bifurcation points for the curve of solutions $(\mu,U_{\mu},U_{\mu})$, i.e. for $\e>0$ small enough there exist radial solutions $(u_\e,v_\e)$ to \eqref{1} which satisfy
\begin{equation}\label{a6}
\left\{\begin{array}{ll}
u_\e(|x|)=U_{\mu_\e}+\e P_n\left(\frac{8-|x|^2}{8+|x|^2}\right)+\e(z_{1,\e}+z_{2,\e})\\
v_\e(|x|)=U_{\mu\e}+\e P_n\left(\frac{8-|x|^2}{8+|x|^2}\right)+\e(z_{1,\e}-z_{2,\e})
\end{array}\right.
\end{equation}
 with $\mu_\e$ such that $\mu_0=\mu_n$ and some functions $(z_{1,\e},z_{2,\e})$ uniformly bounded in $L^{\infty}(\R^2)$, with $z_{i,0}=0$ for $i=1,2$.
\end{theorem}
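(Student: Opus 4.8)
The plan is to reformulate \eqref{1} as an elliptic system on the round sphere $S^2$, where the linearisation studied in Lemma~\ref{6} becomes a Schr\"odinger-type operator with explicitly known spectrum, and then to carry out a Lyapunov--Schmidt reduction (equivalently, apply the Crandall--Rabinowitz theorem) using $\mu$ as bifurcation parameter. Since $e^{U_\mu}=\frac1{2+\mu}\cdot\frac{64}{(8+|x|^2)^2}$ and $\frac{64}{(8+|x|^2)^2}$ is the conformal factor of a round metric $g$ on $S^2$ --- of Gauss curvature $\tfrac12$ and area $8\pi$ --- obtained by stereographic projection, the change of unknowns $u=U_\mu+w_1$, $v=U_\mu+w_2$ transforms \eqref{1} into
\begin{equation}\label{sphere}
-\Delta_g w_i=\frac{2e^{w_i}+\mu\,e^{w_j}}{2+\mu}-1\quad\text{on }S^2,\qquad\{i,j\}=\{1,2\},
\end{equation}
which we abbreviate $F(\mu,w_1,w_2)=0$, with $(w_1,w_2)=(0,0)$ the trivial solution for every $\mu>-2$; moreover, in the class $u-U_\mu,\,v-U_\mu\in L^\infty(\R^2)$ --- the only one relevant near the trivial branch --- $C^{2,\alpha}(S^2)$ solutions of \eqref{sphere} correspond bijectively to finite-mass solutions of \eqref{1}. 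I would work throughout in the subspace of axially symmetric functions, so that the solutions produced are radial.

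Passing to the $\mathbb{Z}_2$-adapted variables $s=w_1+w_2$, $d=w_1-w_2$ decouples the linearisation $D_{(w_1,w_2)}F[\mu,0,0]$ into $-\Delta_g-1$ (acting on $s$) and $-\Delta_g-\tfrac{2-\mu}{2+\mu}$ (acting on $d$). Since the axially symmetric eigenvalues of $-\Delta_g$ are $\tfrac{k(k+1)}2$, $k\ge0$, with eigenfunctions $P_k\!\big(\tfrac{8-r^2}{8+r^2}\big)$, the first operator has the $\mu$-independent one-dimensional kernel spanned by $\psi:=\tfrac{8-r^2}{8+r^2}$ (the dilation mode, tangent at $\delta=1$ to the family $(U_{\mu,\delta},U_{\mu,\delta})$), while the second is invertible except at $\mu=\mu_n$, $n\in\N$, where it acquires the one-dimensional kernel spanned by $\phi_n:=P_n\!\big(\tfrac{8-r^2}{8+r^2}\big)$ --- this is exactly Lemma~\ref{6}, read on $S^2$. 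Thus at $\mu=\mu_n$ the full (axially symmetric) linearised operator has the two-dimensional kernel spanned by $(\psi,\psi)$ and $(\phi_n,-\phi_n)$, and the crossing is transversal: the eigenvalue $\lambda_n(\mu):=\tfrac{n(n+1)}2-\tfrac{2-\mu}{2+\mu}$ satisfies $\lambda_n(\mu_n)=0$ and $\lambda_n'(\mu_n)=\tfrac{4}{(2+\mu_n)^2}\neq0$; equivalently, $\partial_\mu D_{(w_1,w_2)}F[\mu_n,0,0](\phi_n,-\phi_n)=\tfrac{4}{(2+\mu_n)^2}(\phi_n,-\phi_n)$ does not lie in the range of $D_{(w_1,w_2)}F[\mu_n,0,0]$.

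The remaining, and genuinely delicate, point is to separate the $\mu$-independent dilation direction $(\psi,\psi)$ --- which stays in the kernel for all $\mu$ and therefore does not unfold --- from the truly bifurcating direction $(\phi_n,-\phi_n)$. I would write $w=(w_1,w_2)=t\,(\psi,\psi)+\sigma\,(\phi_n,-\phi_n)+\omega$ with $\omega$ in the $L^2(S^2)$-orthogonal complement $R$ of the kernel, solve $\Pi_R F(\mu,w)=0$ for $\omega=\omega(\mu,t,\sigma)$ by the implicit function theorem (the restriction of $D_{(w_1,w_2)}F[\mu_n,0,0]$ to $R$ is an isomorphism onto $R$), and then fix the scaling degeneracy by a normalisation transverse to the scaling orbit --- prescribing the scale of the bubble, equivalently the additive constant in the logarithmic expansion of $u,v$ at infinity --- so that the $(\psi,\psi)$-component of the bifurcation equation is absorbed by the dilation parameter. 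What is then left is the single scalar bifurcation equation in the direction $(\phi_n,-\phi_n)$, whose solvability for $\mu$ near $\mu_n$ is guaranteed by the transversality just computed; this is precisely the content of the Crandall--Rabinowitz theorem. One thereby obtains, for $\e>0$ small, a branch $(\mu(\e),w_1(\e),w_2(\e))$ of solutions of \eqref{sphere} with $\mu(0)=\mu_n$ and $(w_1(\e),w_2(\e))=\e\,(\phi_n,-\phi_n)+o(\e)$, the remainder lying in $R$ and vanishing at $\e=0$; elliptic regularity on $S^2$ makes these solutions smooth. Pulling the branch back to $\R^2$ through the stereographic projection --- so that $\phi_n\mapsto P_n\!\big(\tfrac{8-|x|^2}{8+|x|^2}\big)$ and $U_{\mu(\e)}=U_{\mu_\e}$ --- and writing the symmetric and antisymmetric parts of the remainder as $\e(z_{1,\e}\pm z_{2,\e})$, one gets radial solutions of \eqref{1} of the form \eqref{a6} (with $\pm\,\e P_n$ in the two components dictated by the antisymmetric kernel $(\phi_n,-\phi_n)$), where $\mu_\e\to\mu_n$ and $z_{1,\e},z_{2,\e}$ are uniformly bounded in $C^{2,\alpha}(S^2)\hookrightarrow L^\infty(\R^2)$ with $z_{i,0}=0$. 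I expect this scaling degeneracy to be the main obstacle: since the trivial branch is dilation-degenerate for \emph{every} $\mu$, a naive implicit-function/bifurcation argument cannot work, and one has to set up a functional framework --- the transfer to $S^2$ together with the slice/Lyapunov--Schmidt bookkeeping that factors out $(\psi,\psi)$ --- in which a clean one-parameter bifurcation survives; once this is in place, the rest reduces to the spectral input of Lemma~\ref{6} and the one-line transversality identity.
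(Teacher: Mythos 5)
Your setup agrees with the paper's up to a change of picture: the sum/difference variables $s=w_1+w_2$, $d=w_1-w_2$ are exactly the paper's $\tilde\phi,\psi$ of \eqref{n3}, the spectral facts on the sphere of curvature $\tfrac12$ reproduce Lemma \ref{6}, and your transversality computation $\lambda_n'(\mu_n)=4/(2+\mu_n)^2\neq0$ is the content of Lemma \ref{lem3.7}. Working in $C^{2,\alpha}(S^2)$ rather than in the weighted spaces $X_\a$, $Y_\a$ on $\R^2$ is essentially presentational (one must still check that bounded perturbations correspond to functions regular at the north pole).

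The gap is in the one step you yourself single out as the main obstacle. Saying that the $(\psi,\psi)$-component of the bifurcation equation ``is absorbed by the dilation parameter'' does not work: the infinitesimal generator of the dilation action at the trivial solution is precisely $(\psi,\psi)$, i.e.\ it lies in the kernel of the linearization, so the derivative of that component of the reduced equation with respect to the dilation parameter vanishes at $(\mu_n,0,0)$ and the implicit function theorem cannot be invoked to solve for $\delta$. Equally, restricting the \emph{unknown} to a slice transverse to the orbit does not shrink the \emph{target}: after the Lyapunov--Schmidt reduction you still face two scalar equations, and the vanishing of the $(\psi,\psi)$-component must be proved, not gauged away. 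This is exactly what the paper does: it projects the first equation onto $K$, which produces a Lagrange multiplier $L_\e$ (Theorem \ref{t3.2}, system \eqref{c1}), and then devotes all of Section \ref{s4} to showing $L_\e=0$, using the logarithmic expansion at infinity (Proposition \ref{c2-bis}), the mass quantization $\int_{\R^2}e^{u_\e}=\int_{\R^2}e^{v_\e}=\tfrac{8\pi}{2+\mu_\e}$ (Proposition \ref{c4}) and a Pohozaev identity (Proposition \ref{c6}, Corollary \ref{cor9}), which together reduce to $0=L_\e\int_0^\infty\frac{8-r^2}{(8+r^2)^3}(u_\e'+v_\e')r^2\,dr$ with a nonvanishing limit of the integral as $\e\to0$. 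In your spherical picture the corresponding missing ingredient would be a Kazdan--Warner-type identity tested against the first spherical harmonic $\frac{8-|x|^2}{8+|x|^2}$ (or an appeal to the variational structure of the symmetric Liouville system, which you do not invoke). Without one of these, the branch you construct solves only the projected equation, i.e.\ \eqref{1} with the extra source term $64L_\e\frac{8-|x|^2}{(8+|x|^2)^3}$, and the theorem is not proved.
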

\begin{remark}
When $n\neq 1$ since $P_n(t)\neq t$ then $u_\e(|x|)\neq U_{\mu,\d,O}$ and this provides a second radial solution to the problem.
\end{remark}
\begin{remark}
If $n=1$ in Theorem \ref{i6} then $\mu_1=0$ and our system decouples in two Liouville equations. On the other hand, in this case, $P_1(t)=t$ and the solutions in \eqref{a6} coincide with $U_{\mu,\d,O}$. If $n=2$ then $\mu_2=-1$ and we fall in case of the Cartan matrix. Of course in these cases we get much weaker information of the complete classification results of \cite{CL1} and \cite{JW2}. In fact our results are significative for $n>2$.
\end{remark}
\noindent From Lemma \ref{6} we conjecture that our system admits $2n+4$ families of  solutions bifurcating from $\mu=\mu_n$ as in the case $n=2$. This conjecture is supported by the following perturbation result from $\mu_2=-1$:
\begin{theorem}
\label{thm2}
Let $ (u_{a_1, a_2}, v_{a_1, a_2})$ be the radial solution of the Toda system (\ref{i4}) given by (\ref{i5}) with $b=0, c=0, d=0$. Then there exists $\epsilon >0$ such that for any $ \mu \in (-1-\epsilon, -1+\epsilon)$ there exists a radial solution  $ (u_{\epsilon}, v_{\epsilon})$ to \eqref{1} which satisfies
\begin{equation}
u_\epsilon= u_{a_1, a_2} + \epsilon z_1, \ v_\epsilon= v_{a_1, a_2} +\epsilon z_2
\end{equation}
with some radial smooth and logrithmatically growing functions $ z_1$ and $z_2$.
\end{theorem}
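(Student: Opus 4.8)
\medskip
\noindent\textit{A plan for proving Theorem~\ref{thm2}.}\quad The plan is a Lyapunov--Schmidt reduction along the explicit family \eqref{i5}, using $\epsilon:=\mu+1$ as the bifurcation parameter. Fix $a_1,a_2>0$, put $(u_0,v_0):=(u_{a_1,a_2},v_{a_1,a_2})$, write $\mu=-1+\epsilon$, and look for a radial solution of \eqref{1} in the form $u=u_0+\phi_1$, $v=v_0+\phi_2$. Since $(u_0,v_0)$ solves the Cartan system \eqref{i4}, subtracting the two systems shows that $\phi=(\phi_1,\phi_2)$ must solve
\[
-\Delta\phi \;=\; A\,E_0\,\phi \;+\; N(\phi)\;+\;\epsilon\begin{pmatrix} e^{v_0}\,e^{\phi_2}\\ e^{u_0}\,e^{\phi_1}\end{pmatrix},
\]
where $A=\left(\begin{smallmatrix} 2&-1\\ -1&2\end{smallmatrix}\right)$ is the Cartan matrix, $E_0=\mathrm{diag}(e^{u_0},e^{v_0})$, and $N(\phi)=O(|\phi|^2)$ collects the quadratic and higher order terms. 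Because $u_0,v_0\sim-4\log r$ as $r\to\infty$, I would look for radial $\phi$ in a weighted H\"older (or weighted Sobolev) space that still admits functions growing like $\log r$; in such a space the linear operator $L:=-\Delta\,\mathrm{Id}-A\,E_0$, i.e.\ the linearization of \eqref{i4} at $(u_0,v_0)$, is Fredholm of index zero, since the potential $A E_0$ decays like $r^{-4}$.

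The first main point is to pin down $\ker L$. Differentiating in $a_1$ and $a_2$ the identity that $(u_{a_1,a_2},v_{a_1,a_2})$ solves \eqref{i4} produces two radial, bounded elements $\partial_{a_1}(u_0,v_0)$ and $\partial_{a_2}(u_0,v_0)$ of $\ker L$ (the dilation field $\bigl(r\,\partial_r u_0+2,\ r\,\partial_r v_0+2\bigr)$ being a linear combination of them), and a direct ODE analysis --- analogous to the one behind Lemma~\ref{6}, but performed at $(u_0,v_0)$ rather than at $(U_{\mu_2},U_{\mu_2})$ --- shows these span the whole radial kernel, so $\dim\ker L=2$. The formal adjoint is $L^{*}=-\Delta\,\mathrm{Id}-E_0A$, and, $A$ being symmetric and $E_0$ diagonal, one has $L^{*}=A^{-1}L\,A$; hence $\Psi\in\ker L^{*}$ iff $A\Psi\in\ker L$, so $\operatorname{coker}L\cong\ker L^{*}=A^{-1}\ker L$ is again a concrete $2$-dimensional space. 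Imposing $\phi\perp\ker L$, the projection of the displayed equation onto $\operatorname{Range}L$ has invertible linearization at $(\phi,\epsilon)=(0,0)$, so the implicit function theorem produces a unique small $\phi=\phi(a_1,a_2,\epsilon)$ with $\phi(a_1,a_2,0)=0$ and $\|\phi\|=O(|\epsilon|)$.

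It remains to solve the $2$-dimensional bifurcation equation $\mathcal B(a_1,a_2,\epsilon)=0$, the projection of the displayed equation onto $\operatorname{coker}L$. Since every $(u_{a_1,a_2},v_{a_1,a_2})$ solves \eqref{1} for $\mu=-1$, we have $\mathcal B(\,\cdot\,,\,\cdot\,,0)\equiv0$; differentiating in $\epsilon$ at $\epsilon=0$ and using that $\operatorname{Range}L$ is the kernel of the projection onto $\operatorname{coker}L$, one finds that $\partial_\epsilon\mathcal B|_{\epsilon=0}$ is the projection onto $\operatorname{coker}L$ of the $\mu$-derivative source $(e^{v_0},e^{u_0})$. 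By $\operatorname{coker}L=A^{-1}\ker L$, this vanishes precisely when $A^{-1}(e^{v_0},e^{u_0})^{\mathsf T}$ is $L^{2}$-orthogonal to $\partial_{a_1}(u_0,v_0)$ and to $\partial_{a_2}(u_0,v_0)$; I would check this orthogonality through a Pohozaev / integration-by-parts identity written out from the explicit $u_0,v_0$ and the explicit kernel fields, the boundary terms at infinity --- finite constants, because of the $\log r$ growth --- cancelling. Once the first-order obstruction is gone, one closes the argument by a further finite-dimensional argument for the reduced map in $(a_1,a_2)$, where the scaling invariance $u(x)\mapsto u(\lambda x)+2\log\lambda$, $v(x)\mapsto v(\lambda x)+2\log\lambda$ of \eqref{1} removes one dimension; this yields $u_\epsilon=u_{a_1,a_2}+\epsilon z_1$, $v_\epsilon=v_{a_1,a_2}+\epsilon z_2$ with $z_i:=\phi_i/\epsilon$. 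The functions $z_1,z_2$ are smooth and grow logarithmically, because the linear problem $L\dot\Phi=(e^{v_0},e^{u_0})$ has no bounded radial solution: solving it shifts the total masses $\int_{\R^2}e^u$, $\int_{\R^2}e^v$, hence the logarithmic rate of $u,v$ at infinity.

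The step I expect to be the main obstacle is exactly this last verification: that the $\mu$-derivative source $(e^{v_0},e^{u_0})$ lies in $\operatorname{Range}L$, i.e.\ that the finite-dimensional Lyapunov--Schmidt obstruction vanishes. This is the point where the arithmetic value $\mu_2=-1$ --- which makes $A$ the Cartan matrix, so that the rigid classification \eqref{i5} is at our disposal --- is used in an essential way, and the delicate part of the computation is controlling the boundary contributions at infinity in the orthogonality identity, given that both $(u_0,v_0)$ and the corrector $\dot\Phi$ grow like $\log r$. A secondary technical issue is choosing a weighted functional framework in which $L$ is Fredholm while still admitting logarithmically growing solutions.
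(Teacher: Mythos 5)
Your plan and the paper's proof diverge fundamentally, and the divergence exposes a genuine gap in your argument. The paper performs no Lyapunov--Schmidt reduction at all: it writes $(u,v)=(U_0,V_0)+(\phi,\psi)$, places the perturbation in the radial weighted classes $X_{\alpha,r}$, $Y_{\alpha,r}$ with the norms \eqref{norm12} (which admit logarithmic growth), invokes solvability of the linearized Cartan--Toda operator in that radial class with the a priori bound $\|\phi\|_*+\|\psi\|_*\le C(\|h_1\|_{**}+\|h_2\|_{**})$ (Lemma \ref{lemma2}, quoted from Lemma 2.3 of \cite{ALW}), and concludes by the contraction mapping theorem. No bifurcation equation appears, and the reason is precisely the feature you identify but then misuse: in a space of radial functions allowed to grow like $\log r$, the operator $L=-\Delta\,\mathrm{Id}-AE_0$ (your notation) is \emph{onto} the weighted decaying right-hand sides --- the variation-of-parameters construction behind Lemma \ref{lem3.2}, Corollary \ref{lem3.3} and Lemma \ref{N1} produces a (log-growing) regular solution for every admissible radial source --- while its radial kernel is the two-dimensional span of $\partial_{a_1}(U_0,V_0)$, $\partial_{a_2}(U_0,V_0)$. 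So in the functional setting you yourself choose the index is $+2$, not $0$ as you assert, the cokernel is trivial, and the two-dimensional ``obstruction'' you propose to kill with a Pohozaev identity does not exist.

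If instead you insist on working with bounded perturbations, where $L$ genuinely has index zero and cokernel $A^{-1}\ker L$, then the solvability condition you leave unverified --- that $A^{-1}(e^{v_0},e^{u_0})^{\mathsf T}$ be $L^2$-orthogonal to $\partial_{a_1}(U_0,V_0)$ and $\partial_{a_2}(U_0,V_0)$ --- is exactly the condition for the first-order corrector to be \emph{bounded}; your own closing observation (the corrector must grow logarithmically because the source shifts the masses $\int_{\R^2}e^{u}$, $\int_{\R^2}e^{v}$) says this condition fails. Under that reading the first-order bifurcation equation is not solvable and the plan collapses; under the other reading the reduction, the adjoint computation $L^{*}=A^{-1}LA$, and the unspecified ``further finite-dimensional argument in $(a_1,a_2)$'' are all superfluous. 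The repair is the paper's route: set the problem up from the start in $Y_{\alpha,r}$ with the $\|\cdot\|_{*}$ norm of \eqref{norm12}, use the linear solvability of Lemma \ref{lemma2} (restricting to a complement of the explicit two-dimensional kernel if needed), and close with the implicit function theorem; the logarithmic growth of $z_1,z_2$ in the statement of Theorem \ref{thm2} is exactly the price paid for surjectivity of the linearized operator.
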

\noindent
\begin{remark}
Theorem \ref{thm2} provides two families of radial solutions  to \eqref{5} such that $ u \not = v$. Even when $a_1=a_2$ the two types of solutions constructed in Theorems \ref{i6} and \ref{thm2} are different. An open question is whether or not non-radial solutions can be perturbed.
\end{remark}

\noindent The proof of Theorem \ref{i6} uses the bifurcation  result of \cite{CRJFA} (see Section \ref{s3} for the exact statement). One of the main hypothesis of this result is that the kernel of the linearized operator has to be one-dimensional. From Lemma \ref{6} we know that the linearized operator has instead a very rich kernel. Then, to overcame this problem, we consider only the case of radial solutions. In this way the kernel is reduced to be two-dimensional at the values $\mu=\mu_n$ defined in \eqref{a1}. \\
As said before, the degeneracy responsable of the bifurcation is that  generated by the radial eigenfunctions $P_n\left(\frac{8-|x|^2}{8+|x|^2}\right)$ and by the dilation invariance of the system. We solve this second problem projecting on a suitable subspace which excludes the presence of the function $\frac{8-|x|^2}{8+|x|^2}$ (the eigenfunction given by the scaling invariance of the problem).\\
There is another reason why we consider only the radial case: in fact the presence of the radial solution $ P_n\left(\frac{8-|x|^2}{8+|x|^2}\right)$ in the kernel of the linearized operator, for $\mu=\mu_n$, does not allow us to say that the solutions that we could find using other topological methods (in a nonradial setting) are nonradial.

We want to emphasize that this is a bifurcation for an operator which is strongly degenerate. The case of the bifurcation in a degenerate setting (radial degeneracy) has been considered in \cite{GGN}. In that paper the authors rule out the degeneracy of the corresponding operator considering a nondegenerate approximating problem in expanding balls and showing that the bifurcating branches of solutions converge to branches of bifurcating solutions of the original problem. Here, however, we prefer to use the projection.\\
Of course when we apply the bifurcation theory to find a solution, a Lagrange multiplier appears in the system (see Theorem  \ref{3.10}).\\
The final step of the proof  is to show that the Lagrange multiplier is $zero$. A crucial point is to prove the following {\em mass quantizaton} of the solutions $u_\e$, $v_\e$ (given in \eqref{a6}), i.e. that it holds
$$\int_{\R^2}e^{u_\e}=\int_{\R^2}e^{v_\e}=\frac{8\pi}{2+\mu_\e}.$$ 
The choice of the functional spaces plays a very important role here; we will consider some suitable weighted subspaces of $W^{2,2}\left(\R^2\right)$ already used in \cite{CI}. Moreover, to overcame some problems given by the presence of the Lagrange multiplier, we look for solutions which belong to $L^{\infty}(\R^2)$ also. (This choice is suggested by the form of the eigenfunctions given in Lemma \ref{6}). The interested reader can find all the definitions of the spaces and their main properties in Section \ref{s3}. \\
The paper is organized as follows: In Section \ref{s2} we prove Lemma \ref{6}. In Section \ref{s3} we solve system \eqref{1} up to a Lagrange multiplier and in Section \ref{s4} we prove Theorem \ref{i6} showing that the Lagrange multiplier is $zero$. Finally in Section \ref{s5} we use a perturbation approach to prove Theorem \ref{thm2}.

\sezione{The linearized operator}\label{s2}
\noindent In this Section we consider the linearized system \eqref{5} and we prove Lemma \ref{6}.
\begin{proof}[Proof of Lemma \ref{6}]
Let $w_1,w_2$ be solutions to  \eqref{5} and set $z=w_1-w_2$. We have the following alternatives:\\
$i) z\equiv0$. In this case \eqref{5} reduces to the equation
\begin{equation}\label{7}
-\Delta w_1=(2+\mu)e^{U_\mu}w_1
\end{equation}
which admits, in radial coordinates, the solutions
\begin{equation}\label{8}
w_1=w_2=\left.\sum\limits_{i=1}^2a_i\frac{\partial U_{\mu,\delta,y}}{\partial y_i}+a_3\frac{\partial U_{\mu,\delta,y}}{\partial\delta}\right|_{\delta=1,y=O}
\!\!\!\!\!\!\!\!\!\!\!\!\!\!\!\!=a_1\frac{rcos\theta}{8+r^2}+a_2\frac{r\sin\theta}{8+r^2}+a_3\frac{8-r^2}{8+r^2}
\end{equation}
for some real constants $a_1,a_2,a_3$.\\
$ii) z\not\equiv0$. Normalizing $z$ with its $||.||_\infty$ we get the $z$ is a solution to
\begin{equation}\label{9}
\left\{\begin{array}{ll}
-\Delta z=(2-\mu)e^{U_\mu}z\hbox{ in }\R^2 \\
||z||_\infty=1
\end{array}\right.
\end{equation}
which is equivalent to
\begin{equation}\label{10}
-\Delta z=\frac{2-\mu}{2+\mu}\frac{64}{\left(8+|x|^2\right)^2} z\hbox{ in }\R^2
\end{equation}
The eigenvalues of \eqref{10} (and the relative $L^{\infty}$-eigenfunctions) were computed in \cite{GG}, Theorem 11.1. So we know that \eqref{10} has a nontrivial bounded solution if and only if
\begin{equation}\label{11}
\frac{2-\mu}{2+\mu}=\frac{n(n+1)}2\quad n\in\N\cup\{0\},
\end{equation}
and
\begin{equation}\label{12}
  z(r,\theta)=\sum\limits_{m=0}^n \left[A_m\cos(m\theta)+B_m\sin(m\theta)\right]P^m_n\left(\frac{8-r^2}{8+r^2}\right), 
\end{equation}
where $P^m_n$ are the associated Legendre polynomials as defined in \eqref{a4}-\eqref{a5}. Finally each eigenvalue $c_n=\frac{n(n+1)}2$ has multiplicity $2n + 1$. From \eqref{11} we deduce that
\begin{equation}\label{13}
\mu=\mu_n=2\frac{2-n-n^2}{2+n+n^2}\quad n\in\N\cup\{0\}.
\end{equation}
Let us compute $w_1$ and $w_2$. Summing the equations in \eqref{5} we get that
\begin{equation}\label{14}
-\D(w_1+w_2)=(2+\mu)e^{U_\mu}(w_1+w_2)
\end{equation}
which implies
\begin{equation}\label{15}
w_1+w_2=\left.\sum\limits_{i=1}^2a_i\frac{\partial U_{\mu,\delta,y}}{\partial y_i} +a_3\frac{\partial U_{\mu,\delta,y}}{\partial\delta}\right|_{\delta=1,y=O}
\!\!\!\!\!\!\!\!\!\!\!\!\!\!\!\!=a_1\frac{rcos\theta}{8+r^2}+a_2\frac{r\sin\theta}{8+r^2}+a_3\frac{8-r^2}{8+r^2}.
\end{equation}
Since by \eqref{12} we derive that
\begin{equation}\label{16}
w_1-w_2= \sum\limits_{m=0}^n \left[A_m\cos(m\theta)+B_m\sin(m\theta)\right]P^m_n\left(\frac{8-r^2}{8+r^2}\right),
\end{equation}
by \eqref{15} and \eqref{16} we get \eqref{a3}.
 \end{proof}
\begin{remark}
{As said in the proof of Lemma \ref{6} the linearized operator has, for every $\mu\neq \mu_n$, a $3-$dimensional  kernel. For $\mu=\mu_n$ we have to add the $2n+1$ solutions of equation \eqref{10}
whose one radial and $2n$ nonradial. This implies that the Morse index of the solution $(U_\mu,U_\mu)$, in a suitable space, changes in $\mu=\mu_n$ by $2n+1$. The fact that the change of the Morse index in $\mu=\mu_n$ is odd is a good indication to obtain the bifurcation.}\\
\end{remark}

\sezione{An existence result}\label{s3}
\noindent From here on, all the functions that we consider will be radial, i.e. such that $w(x)=w(|x|)$ for any $x\in \R^2$.\\
Applying Lemma \ref{6} we have that \eqref{5} has, for every $\mu>-2$ and $\mu\ne\mu_n$, the solution (up to a constant)
\begin{equation}\label{n1}
(w_1,w_2)=\left(\frac{8-|x|^2}{8+|x|^2},\frac{8-|x|^2}{8+|x|^2}\right),
\end{equation}
and for  $\mu=\mu_n$ and $n\in \N$,
\begin{equation}\label{n2}
(w_1,w_2)=\left(A P_n\left(\frac{8-|x|^2}{8+|x|^2}\right)+a\frac{8-|x|^2}{8+
|x|^2},-A P_n\left(\frac{8-|x|^2}{8+|x|^2}\right) +a\frac{8-|x|^2}{8+|x|^2},\right)
\end{equation}
for some real constants $a$ and $A$, where $P_n=P_n^0$ is the $n-$th Legendre Polynomial, as defined in \eqref{a5}.\\
To obtain the bifurcation result we will use the transversality condition of Crandall and Rabinowitz (see Theorem 1.7 in \cite{CRJFA}). Although this result  is very well known, we report it for completeness. \\
Before stating it, let us introduce some notations. Let $G=G(t,x):\R\times X\rightarrow Y$ be an operator from the Banach spaces $X$ and $Y$. \\
We will denote by $G'_x$ the Fr\'echet derivative of $G$ with respect to $x$ and by $G''_{t,x}$ the Fr\'echet derivative of $G'_x$ with respect to $t$.
\begin{theorem}[\cite{CRJFA}, Theorem 1.7]\label{CR}
Let $X,Y$ be Banach spaces, $V$ a neighborhood of $0$
in $X$ and
$F:(-2,2)\times V\to Y$
have the properties:
\begin{itemize}
\item[a)]$F(t,0)=0$ for $|t|<2$,
\item[b)]The partial derivatives $F'_t$, $F'_x$ and $F''_{t,x}$ exist and are continuous,
\item[c)]$Ker(F'_x(0,0))$ and $Y\setminus R(F'_x(0,0))$ are one-dimensional,
\item[d)]$F''_{t,x}(0,0)w_0\notin R(F'_x(0,0))$, where $Ker(F'_x(0,0))=span\{w_0\}$.
\end{itemize}
If $Z$ is any complement of $Ker(F'_x(0,0))$ in $X$, then there is a neighborhood $U$
of $(0,0)$ in $\R\times X$, an interval $(-\e_0, \e_0)$, and continuous functions
$\eta:(-\e_0, \e_0)\to \R$, $z:(-\e_0, \e_0)\to Z$ such that $\eta(0)=0$, $z(0)= 0$ and
$$F^{-1}(0)\cap U=\{(\eta(\e),\e w_0+\e z(\e))\, :\,|\e|<\e_0\}\cup \{(t,0)\,:\,(t,0)\in U\}.$$
\end{theorem}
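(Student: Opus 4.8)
The final statement is the classical bifurcation-from-a-simple-eigenvalue theorem of Crandall and Rabinowitz, so the plan is to carry out a Lyapunov--Schmidt reduction and then apply the implicit function theorem twice. Write $L:=F'_x(0,0)$. By hypothesis (c) the kernel $\ker L=\mathrm{span}\{w_0\}$ is one-dimensional, hence (being finite dimensional) topologically complemented, so I may fix the given decomposition $X=\mathrm{span}\{w_0\}\oplus Z$; likewise $R(L)$ has codimension one and is closed, so $Y=R(L)\oplus\mathrm{span}\{y_0\}$ for some $y_0\notin R(L)$, and I let $P$ and $Q=I-P$ be the associated bounded projections onto $R(L)$ and $\mathrm{span}\{y_0\}$. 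Since $F'_t,F'_x$ are continuous, $F$ is of class $C^1$, and $L|_Z:Z\to R(L)$ is a bounded bijection, hence an isomorphism by the open mapping theorem.

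First I would split $F(t,x)=0$ into its range and cokernel parts. Writing $x=s\,w_0+z$ with $s\in\R$, $z\in Z$, the range equation is $P\,F(t,s w_0+z)=0$; its $z$-derivative at $(0,0,0)$ equals $P\,L|_Z=L|_Z$, an isomorphism, so the implicit function theorem yields a $C^1$ map $z=z(t,s)\in Z$ with $z(0,0)=0$ solving it locally and uniquely. Because $F(t,0)=0$, uniqueness forces $z(t,0)=0$ for all small $t$. Differentiating the range equation in $s$ at $(0,0)$ gives $P\,L[w_0+\partial_s z(0,0)]=0$; as $Lw_0=0$ and $L|_Z$ is injective, this yields the key identity $\partial_s z(0,0)=0$.

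Substituting $z(t,s)$ into the remaining equation defines the scalar bifurcation function $g$ through $Q\,F(t,s w_0+z(t,s))=g(t,s)\,y_0$, and the zeros of $F$ near the origin correspond to the trivial branch $s=0$ together with the zeros of $g$. Using $F(t,0)=0$ I would factor $g(t,s)=s\,h(t,s)$ with $h$ continuous (an integral of $\partial_s g$ along the segment to $0$), so that the nontrivial solutions solve $h(t,s)=0$. Differentiating $g(t,s)y_0=QF(t,sw_0+z)$ in $s$ at the origin gives $\partial_s g(0,0)\,y_0=Q\,L[w_0+\partial_s z(0,0)]=0$, hence $h(0,0)=\partial_s g(0,0)=0$, consistent with bifurcation. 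The transversality derivative is obtained by differentiating $h(t,0)\,y_0=Q\,F'_x(t,0)[w_0+\partial_s z(t,0)]$ in $t$: the term carrying $\partial_t\partial_s z$ is hit by $Q\,L=0$ and vanishes, and since $\partial_s z(0,0)=0$ the whole expression collapses to $\partial_t h(0,0)\,y_0=Q\,F''_{t,x}(0,0)w_0$, which is nonzero precisely by hypothesis (d), i.e. $F''_{t,x}(0,0)w_0\notin R(L)$. A final application of the implicit function theorem to $h(t,s)=0$ then solves $t=\eta(\e)$ with $\eta(0)=0$; writing $z(\eta(\e),\e)=\e\,z(\e)$ (legitimate because $z(\cdot,0)\equiv0$, with $z(0)=\partial_s z(0,0)=0$) produces the branch $(\eta(\e),\e w_0+\e z(\e))$, and the uniqueness in both implicit-function steps gives the claimed local description of $F^{-1}(0)$.

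The main obstacle is not the algebra of the reduction but the regularity bookkeeping that makes $\partial_t h(0,0)$ meaningful: the map $z(t,s)$ delivered by the implicit function theorem is only $C^1$, so a priori the mixed derivative $\partial_t\partial_s z$ need not exist. The point, which I would write out most carefully, is that one never needs it. Along the trivial fibre $s=0$ the quantity $\partial_s z(t,0)$ is given explicitly by $-(P\,F'_x(t,0)|_Z)^{-1}P\,F'_x(t,0)w_0$, and the continuity of $F''_{t,x}$ makes $t\mapsto F'_x(t,0)$ of class $C^1$, whence $t\mapsto \partial_s z(t,0)$ is $C^1$ and $h(t,0)$ is differentiable at $0$; moreover whatever that derivative is, it enters $\partial_t h(0,0)$ only through $Q\,L$ and is annihilated. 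This is exactly why hypothesis (b) demands $F''_{t,x}$ but not $F''_{x,x}$.
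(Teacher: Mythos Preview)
The paper does not prove this statement at all: Theorem~\ref{CR} is quoted verbatim from Crandall--Rabinowitz \cite{CRJFA} and used as a black box, so there is no ``paper's own proof'' to compare against. Your proposal is the standard Lyapunov--Schmidt reduction that underlies the original result, and the outline is correct; in particular your handling of the regularity issue (that only $F''_{t,x}$, not $F''_{x,x}$, is assumed, and that the potentially problematic mixed derivative of $z$ enters only through $QL=0$) is exactly the point that makes the hypotheses minimal.

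One small gap worth tightening: to apply the implicit function theorem to $h(t,s)=0$ you need $h$ to be $C^1$ in a neighborhood of $(0,0)$, not merely that $\partial_t h(0,0)$ exists. Your integral representation $h(t,s)=\int_0^1 \partial_s g(t,\tau s)\,d\tau$ together with the $C^1$ dependence of $F'_x$ on $t$ (from the continuity of $F''_{t,x}$) and the $C^1$ dependence of $z$ on $(t,s)$ does give continuity of $\partial_t h$ near the origin, but you should state this explicitly rather than only computing the value at $(0,0)$.
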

\noindent To apply the bifurcation result we have to {\em rule out} the degeneracy due to the invariance for dilations of problem \eqref{1}. To do this we ask that
the functions are orthogonal, in some sense, to the eigenfunction $(\frac{8-|x|^2}{8+|x|^2},\frac{8-|x|^2}{8+|x|^2})$. \\[.1cm]
As we did in the study of the linearized operator in Section 2,  we consider the sum and difference of the solutions $u$ and $v$ of \eqref{1}. Let 
\begin{equation}\label{n3}
\left\{\begin{array}{ll}
\tilde \phi=u+v\\
\psi=u-v.
\end{array}\right.
\end{equation}
Then problem \eqref{1} is equivalent to
\begin{equation}\label{3.1}
\left\{\begin{array}{ll}
-\Delta \tilde\phi=(2+\mu) \left(e^{\frac{\tilde \phi+\psi}2}+ e^{\frac{\tilde \phi-\psi}2}\right) & \hbox{ in }\R^2\\
-\Delta \psi=(2-\mu) \left(e^{\frac{\tilde \phi+\psi}2}- e^{\frac{\tilde \phi-\psi}2}\right) & \hbox{ in }\R^2\\
\int_{\R^2}e^{\frac{\tilde \phi+\psi}2}<+\infty,\ \int_{\R^2}e^{\frac{\tilde \phi-\psi}2}<+\infty
\end{array}\right.
\end{equation}
and $(\mu,2U_{\mu},0)$ is a curve of radial solutions of \eqref{3.1}.\\
First we shift these solutions in the origin, i.e. we let 
\begin{equation}\label{n4}
\tilde\phi=2U_{\mu}+\phi.
\end{equation}
 Then, finding solutions for \eqref{3.1} is equivalent to find solutions for the problem
\begin{equation}\label{3.2}
\left\{\begin{array}{ll}
-\Delta\phi=(2+\mu)e^{U_{\mu}} \left(e^{\frac{ \phi+\psi}2}+ e^{\frac{ \phi-\psi}2}-2\right) & \hbox{ in }\R^2\\
-\Delta \psi=(2-\mu)e^{U_{\mu}} \left(e^{\frac{ \phi+\psi}2}- e^{\frac{ \phi-\psi}2}\right) & \hbox{ in }\R^2\\
\int_{\R^2}e^{U_{\mu}}e^{\frac{ \phi+\psi}2}<+\infty,\ \int_{\R^2}e^{U_{\mu}}e^{\frac{ \phi-\psi}2}<+\infty
\end{array}\right.
\end{equation}
and $(\mu,0,0)$ is a solution of \eqref{3.2} for every value of $\mu$. Moreover $u=U_\mu+\frac 12(\phi+\psi)$ and $v=U_\mu+\frac 12(\phi-\psi)$ are solutions of \eqref{1}.\\
Let us introduce the spaces in which we apply Theorem \ref{CR}. Let $\a\in(0,1)$ be fixed, and
\begin{equation}\label{3.3}
X_{\a}=\left\{u\in L^{2}_{loc}(\R^2)\,|\, \int_{\R^2}(1+|x|^{2+\a})u^2\, dx<+\infty\right\},
\end{equation}
equipped with the inner product $(u,v)_{X_\a}=\int_{\R^2}(1+|x|^{2+\a})uv\, dx$,
\begin{equation}\label{3.4}
Y_{\a}=\left\{u\in W^{2,2}_{loc}(\R^2)\,|\, \nor \Delta u\nor^2_{X_\a}+\nor\frac u{(1+|x|^{1+\frac\a2})}\nor^2_{L^2(\R^2)}<+\infty\right\},
\end{equation}
equipped with the inner product
$$(u,v)_{Y_\a}=(\Delta u,\Delta v)_{X_\a}+\int_{\R^2}\frac {uv}{(1+|x|^{2+\a})}\, dx.$$
The spaces $X_\a$ and $Y_\a$  were introduced by Chae and Imanuvilov in \cite{CI} to find Non-Topological Multivortex Solutions for a nonlinear problem with exponential nonlinearity in $\R^2$.\\
We recall some properties of the spaces $X_\a$ and $Y_\a$ that we need in the sequel.
\begin{proposition}[\cite{CI}]\label{CIa-properties}
Let $\a\in(0,1)$, then
\begin{itemize}
\item[i)] $Y_{\a}\subset C^0_{loc}(\R^2)$ and $X_\a\subset L^1(\R^2)$.
\item[ii)] There exists $C_1>0$ such that for any $g\in Y_\a$
\begin{equation}\label{3.6}
|g(x)|\leq C_1 \nor g\nor_{Y_\a}\left( \log^+|x|+1\right)\hbox{ for any }x\in \R^2
\end{equation}
where $log^+|x|=\max\{0,\log|x|\}$.
\item[iii)] If $g\in Y_{\a}$ then $\Delta g\in X_{\a}$. 
\item[iv)] If $\Delta g=0$ we get that $g=c$ for some $c\in\R$.
\end{itemize}
\end{proposition}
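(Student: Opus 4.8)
Since the four assertions are of quite different natures, the plan is to dispatch (iii) and (i) first as embedding facts, then prove the Liouville-type rigidity (iv), and finally the growth estimate (ii), which is the only substantial point and which will rely on (iv).

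Assertion (iii) is immediate, since $\nor\Delta g\nor_{X_\a}^2$ is literally one of the two summands defining $\nor g\nor_{Y_\a}^2$. For $X_\a\subset L^1(\R^2)$ I would factor $|u|=\big[(1+|x|^{2+\a})^{1/2}|u|\big]\,(1+|x|^{2+\a})^{-1/2}$ and apply Cauchy--Schwarz; the tail integral $\int_{\R^2}(1+|x|^{2+\a})^{-1}dx=2\pi\int_0^\infty r(1+r^{2+\a})^{-1}dr$ is finite precisely because $\a>0$, yielding $\nor u\nor_{L^1}\le C(\a)\nor u\nor_{X_\a}$. For $Y_\a\subset C^0_{loc}(\R^2)$ I would use the two-dimensional embedding $W^{2,2}(\R^2)\hookrightarrow C^0$: since $(1+|\xi|^2)\hat g\in L^2$ and $(1+|\xi|^2)^{-1}\in L^2(\R^2)$, Cauchy--Schwarz gives $\hat g\in L^1$, so $g$ has a bounded continuous representative; cutting off against a test function localizes this to $W^{2,2}_{loc}\hookrightarrow C^0_{loc}$.

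For (iv), if $\Delta g=0$ then by Weyl's lemma $g$ is smooth and harmonic, and because $g\in W^{2,2}_{loc}$ it is regular at the origin, so it expands as $g=a_0+\sum_{k\ge1}r^k(c_k\cos k\theta+d_k\sin k\theta)$. The requirement $\int_{\R^2}g^2(1+|y|^{2+\a})^{-1}dy<\infty$ then forces every $k\ge1$ coefficient to vanish, since the corresponding radial integral behaves like $\int_1^\infty r^{2k-1-\a}dr$, which diverges whenever $2k\ge\a$ --- and this holds for all $k\ge1$ because $\a<1$. Hence $g\equiv a_0$. Note that (iv) is thus logically prior to (ii): it is proved directly here and will be invoked below to identify the harmonic part of $g$ as a constant.

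The growth estimate (ii) is the main obstacle. Setting $f:=\Delta g\in X_\a$, I would represent $g$ through the logarithmic potential $N[f](x):=\frac1{2\pi}\int_{\R^2}\log|x-y|\,f(y)\,dy$. A preliminary weighted Cauchy--Schwarz bound, $\int_{\R^2}(1+\log^+|y|)|f(y)|\,dy\le C(\a)\nor f\nor_{X_\a}$ (again using $\a>0$), shows this integral converges absolutely and gives $\Delta N[f]=f$ in the distributional sense. The core is the pointwise bound $|N[f](x)|\le C(\a)\nor f\nor_{X_\a}(1+\log^+|x|)$, which for $|x|\ge2$ I would obtain by splitting $\R^2$ into $\{|y|\le|x|/2\}$, $\{|x|/2<|y|<2|x|\}$ and $\{|y|\ge2|x|\}$: on the first region $|x-y|\approx|x|$ so $|\log|x-y||\le C(1+\log|x|)$ and the contribution is $\le C(1+\log|x|)\nor f\nor_{L^1}$; on the third region $|x-y|\approx|y|$ and the contribution is controlled by the weighted $L^1$ norm above, uniformly in $x$; the middle annulus, carrying the singularity $y=x$, I would treat by Cauchy--Schwarz against the weight, using $1+|y|^{2+\a}\approx|x|^{2+\a}$ there together with $\int_{|z|\le3|x|}(\log|z|)^2dz\le C|x|^2(\log|x|)^2$, producing a contribution $\le C\nor f\nor_{X_\a}\,|x|^{-\a/2}\log|x|$, hence bounded. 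The range $|x|\le2$ is an estimate of the same kind over a fixed bounded set. Finally $h:=g-N[f]$ is harmonic by the identity above, so (iv) forces $h\equiv c$ for a constant $c$; integrating $(g-N[f])^2(1+|y|^{2+\a})^{-1}$ over $B_1$ gives $c^2\le C\nor g\nor_{Y_\a}^2$, and combined with $\nor f\nor_{X_\a}\le\nor g\nor_{Y_\a}$ this yields $|g(x)|\le C(\a)\nor g\nor_{Y_\a}(1+\log^+|x|)$, which is exactly \eqref{3.6}. I expect (ii) to be the only genuinely delicate step, the decay exponent $\a>0$ and the bound $\a<1$ entering through the convergence of the weight integrals and through the rigidity (iv) respectively.
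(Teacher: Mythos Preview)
The paper does not prove this proposition at all; it is quoted from Chae--Imanuvilov \cite{CI} and stated without argument, so there is no ``paper's approach'' to compare against. Your proof is correct and self-contained, and the strategy of first establishing the Liouville rigidity (iv) and then using it to pin down the harmonic remainder in (ii) is exactly the natural one.

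Two small points. First, the weight in the definition of $Y_\a$ is $(1+|x|^{1+\a/2})^{-2}$ rather than $(1+|x|^{2+\a})^{-1}$; since the two are comparable as $|x|\to\infty$ your divergence computation for the $r^k$ harmonics in (iv) is unaffected, but you should use the correct expression. Second, in (ii) you apply (iv) to $h=g-N[f]$, which requires $h\in Y_\a$; this is true but deserves one line: the logarithmic growth bound on $N[f]$ gives $\int_{\R^2}N[f]^2(1+|x|^{1+\a/2})^{-2}\,dx<\infty$, and $\Delta N[f]=f\in X_\a$, so $N[f]\in Y_\a$ and hence $h\in Y_\a$. With these clarifications the argument is complete.
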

Let us introduce the function
\begin{equation}\label{3.5}
M(f)(x):=-\frac 1{2\pi}\int_{\R^2}\log|x-y|f(y)\,dy.
\end{equation}
Let us recall some properties of $M$,
\begin{proposition}[\cite{CI}]\label{CIb-properties}
We have that,
\begin{itemize}
\item[i)] $M:X_{\a}\rightarrow Y_{\a}$.
\item[ii)] $-\Delta M(f)=f$ in $\R^2$.
\item[iii)] $M\in  L(X_{\a},Y_{\a})$ and
\begin{equation}\label{3.5-bis}
|M(g)(x)|\leq C_2 \nor g\nor_{X_\a}\left( \log^+|x|+1\right)\hbox{ for any }x\in \R^2,
\end{equation}
\end{itemize}
where  $C_2$ depends only on $\a$.
\end{proposition}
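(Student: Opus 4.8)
The plan is to view $M$ as the convolution of $f$ with the fundamental solution $-\frac{1}{2\pi}\log|x|$ of $-\Delta$ in $\R^2$. Linearity is then immediate, so the proposition reduces to three things: the pointwise bound \eqref{3.5-bis}, the identity $-\Delta M(f)=f$, and reading off membership in $Y_\a$ together with continuity. As a preliminary I would sharpen Proposition \ref{CIa-properties}(i): applying Cauchy--Schwarz against the weight $1+|y|^{2+\a}$ gives
\begin{equation}\nonumber
\nor f\nor_{L^1(\R^2)}+\int_{\R^2}\log(1+|y|)\,|f(y)|\,dy\le C_\a\,\nor f\nor_{X_\a},
\end{equation}
the point being that $\int_{\R^2}\big(1+\log(1+|y|)\big)^2\big(1+|y|^{2+\a}\big)^{-1}\,dy<+\infty$ since $2+\a>2$; throughout, $C_\a$ denotes a constant depending only on $\a$.

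To prove \eqref{3.5-bis} I would split the kernel. From $|M(f)(x)|\le\frac1{2\pi}\int_{\R^2}\big|\log|x-y|\big|\,|f(y)|\,dy$, on the region $\{|x-y|<1\}$ I would use Cauchy--Schwarz with $\int_{\{|z|<1\}}(\log|z|)^2\,dz<+\infty$ and the trivial bound $\int_{\{|x-y|<1\}}|f(y)|^2\,dy\le\nor f\nor_{X_\a}^2$ (the weight being $\ge1$), while on $\{|x-y|\ge1\}$ I would use $\log|x-y|\le\log2+\log^+|x|+\log(1+|y|)$ together with the preliminary estimate to bound that part by $C_\a\big(1+\log^+|x|\big)\nor f\nor_{X_\a}$. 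Summing the two contributions gives \eqref{3.5-bis} with $C_2=C_2(\a)$, and in particular shows that the integral defining $M(f)$ converges absolutely at every point.

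For the identity $-\Delta M(f)=f$, I would test against $\varphi\in C^\infty_c(\R^2)$ and apply Fubini (legitimate because, by the previous step, the double integral converges absolutely, uniformly for $x$ in the support of $\varphi$, and $f\in L^1\cap L^2_{loc}$), then invoke $-\Delta\big(-\frac1{2\pi}\log|\cdot|\big)=\delta_0$ to obtain $\int_{\R^2}M(f)(-\Delta\varphi)=\int_{\R^2}f\varphi$, i.e. $-\Delta M(f)=f$ in $\mathcal D'(\R^2)$. Since $f\in L^2_{loc}$, interior Calder\'on--Zygmund estimates then upgrade $M(f)$ to $W^{2,2}_{loc}(\R^2)$. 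To conclude, $\Delta M(f)=-f\in X_\a$ with $\nor\Delta M(f)\nor_{X_\a}=\nor f\nor_{X_\a}$; and squaring \eqref{3.5-bis} and integrating against $\big(1+|x|^{1+\a/2}\big)^{-2}$ yields $\nor M(f)/(1+|x|^{1+\a/2})\nor_{L^2(\R^2)}\le C_\a\nor f\nor_{X_\a}$, because $\int_{\R^2}\big(\log^+|x|+1\big)^2\big(1+|x|^{1+\a/2}\big)^{-2}\,dx<+\infty$ (the integrand behaves like $(\log|x|)^2|x|^{-2-\a}$ at infinity and is bounded near the origin). Putting these together gives $M(f)\in Y_\a$ with $\nor M(f)\nor_{Y_\a}\le C_2\nor f\nor_{X_\a}$, which is exactly (i) and (iii).

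The one genuinely delicate point is the uniform pointwise estimate \eqref{3.5-bis}: one must absorb the logarithmic diagonal singularity of the kernel against data that are only $L^2_{loc}$, simultaneously tame the logarithmic growth of the kernel at infinity against merely the weighted-$L^1$ decay of $f$, and keep every constant depending on $\a$ alone. The other ingredients --- distributional identification of $\Delta M(f)$ and interior elliptic regularity --- are entirely standard Newtonian-potential theory.
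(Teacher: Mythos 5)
Your proof is correct, but note that the paper itself offers no argument for this statement: Proposition \ref{CIb-properties} is simply quoted from Chae--Imanuvilov \cite{CI}, where the relevant facts appear among the preliminary lemmas on the spaces $X_\a$, $Y_\a$ and the Newtonian potential. What you have supplied is therefore a self-contained reconstruction rather than an alternative to a proof in the paper. The reconstruction is sound: the weighted Cauchy--Schwarz estimate $\int(1+\log(1+|y|))|f|\le C_\a\nor f\nor_{X_\a}$ (valid because $\int(1+\log(1+|y|))^2(1+|y|^{2+\a})^{-1}\,dy<\infty$ for $\a>0$) is exactly the mechanism that converts the $X_\a$-norm into the weighted-$L^1$ control needed at infinity; the near-diagonal part is correctly absorbed using $\int_{\{|z|<1\}}(\log|z|)^2\,dz<\infty$ against the trivial bound $\int_{\{|x-y|<1\}}|f|^2\le\nor f\nor_{X_\a}^2$; and the elementary inequality $\log|x-y|\le\log 2+\log^+|x|+\log(1+|y|)$ on $\{|x-y|\ge 1\}$ yields \eqref{3.5-bis} with a constant depending only on $\a$. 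The passage from \eqref{3.5-bis} to the two pieces of the $Y_\a$-norm (the identity $\nor\Delta M(f)\nor_{X_\a}=\nor f\nor_{X_\a}$ and the convergence of $\int(\log^+|x|+1)^2(1+|x|^{1+\a/2})^{-2}\,dx$) is also correct, as is the distributional identification of $-\Delta M(f)$ followed by interior $W^{2,2}$ regularity, which is legitimate since the pointwise bound already places $M(f)$ in $L^\infty_{loc}$. This is essentially the same route taken in \cite{CI}; the only caveat is that a referee would expect you either to cite \cite{CI} as the paper does or to present this argument explicitly, since the paper leaves no intermediate option.
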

\noindent 
Since all eigenfunctions in \eqref{n2}  are bounded in the $L^{\infty}$-norm, it seems appropriate to use as a starting space the subset of functions of $Y_\a$
which are bounded in the $L^{\infty}$-norm. (Recall that functions in $Y_\a$ may have logaritmic growth at infinity.)
Then we define
\begin{equation}\label{nuovo-spazio-Y}
\widetilde Y_\a:=Y_\a\cap L^{\infty}(\R^2).
\end{equation}
$\widetilde Y_\a$ is a Banach space with the norm
$$\nor g\nor_{\widetilde Y_\a}=\max \{ \nor g\nor_{Y_\a}\, ,\,\nor g\nor_{\infty}\}$$
where $\nor g\nor_{\infty}$ denotes the usual norm in $L^{\infty}(\R^2)$.
\noindent Using the operator $M$ defined in \eqref{3.5},
we look for solutions of \eqref{3.2} as zeros of the operator  $\widetilde T:(-2,2)\times \widetilde Y_\a\times\widetilde Y_\a$
\begin{equation}\label{3.7}
\widetilde T(\mu,\phi,\psi)=\left(
\begin{array}{l}
\phi-M\left((2+\mu)e^{U_{\mu}} \left(e^{\frac{ \phi+\psi}2}+ e^{\frac{ \phi-\psi}2}-2\right)\right)\\
\psi-M\left((2-\mu)e^{U_{\mu}} \left(e^{\frac{ \phi+\psi}2}- e^{\frac{ \phi-\psi}2}\right)\right)
\end{array}
\right)
\end{equation}
Since $\phi$ and $\psi$ are bounded and $U_{\mu}\in X_\a$, by
Propositions \ref{CIa-properties} and \ref{CIb-properties}, the operator $\widetilde T(\mu,\phi,\psi)$ maps $(-2,2)\times  \widetilde Y_\a \times \widetilde Y_\a  \mapsto Y_\a\times Y_\a$.
Hence the zeros of $\widetilde T(\mu,\phi,\psi)$ 
 are bounded solutions of \eqref{3.2}.\\
Observe that $(\mu,0,0)$ is a curve of solutions of \eqref{3.2} in this space, i.e. $\widetilde T(\mu,0,0)=0$ for $|\mu|<2$.\\
Let us consider the linerized operator at $(\mu_n,0,0)$, i.e.
\begin{equation}\label{3.8}
\widetilde T'_{(\phi,\psi)}(\mu_n,0,0)\left(\begin{array}{l}
w_1\\
w_2
\end{array}\right)=\left(
\begin{array}{l}
w_1-M\left( (2+\mu_n)e^{U_{\mu_n}}w_1\right)\\
w_2-M\left((2-\mu_n)e^{U_{\mu_n}}w_2\right).
\end{array}
\right)
\end{equation}
As shown in the proof of Proposition \ref{6}, the solutions $w_1,w_2\in \widetilde Y_{\a}$ of
$$\widetilde T'_{(\phi,\psi)}(\mu_n,0,0)\left(\begin{array}{l}
w_1\\
w_2
\end{array}\right)=\left(\begin{array}{l} 0\\0\end{array}\right)$$
are given by
$$\left(\begin{array}{l}
w_1\\
w_2
\end{array}\right)=\left(\begin{array}{c}
a\frac{8-r^2}{8+r^2} \\
A P_n\left(a \frac{8-r^2}{8+r^2}\right)\end{array}\right)$$
for $a,A\in \R$.\\
Here we want to find a new radial solution of \eqref{3.2} which is due to the presence of the second component $w_2$ in the linearized equation (which appears only at the values $\mu=\mu_n$).
We are going to verify the assumption of the bifurcation result of Crandall and Rabinowitz, (Theorem \ref{CR}). To this end we define
\begin{equation}\label{K}
K:=\{g\in Y_\a\, :\, \int_{\R^2}(\Delta g) \frac{8-|x|^2}{8+|x|^2}\, dx=0\}.
\end{equation} 
From the definition of $Y_\a$ we get that $K$ is a linear closed subspace of $Y_{\a}$. We let $P_{K}$ be the projection of $Y_{\a}$ on $K$.
Finally we consider the operator $T(\mu,\phi,\psi)$  defined by
\begin{equation}\label{3.9}
 T(\mu,\phi,\psi)=\left(
\begin{array}{l}
 \phi -P_{K}\left(M\left((2+\mu)e^{U_{\mu}} \left(e^{\frac{ \phi+\psi}2}+ e^{\frac{ \phi-\psi}2}-2\right)\right)\right)\\
\psi-M\left((2-\mu)e^{U_{\mu}} \left(e^{\frac{ \phi+\psi}2}- e^{\frac{ \phi-\psi}2}\right)\right)
\end{array}
\right)
\end{equation}
Letting $
\widetilde K:=\left\{g\in \widetilde Y_\a\, :\, \int_{\R^2}(\Delta g) \frac{8-|x|^2}{8+|x|^2}\, dx=0\right\}\subset \widetilde Y_\a\cap K$, 
the operator  $T(\mu,\phi,\psi)$ maps $(-2,2)\times \widetilde K  \times \widetilde Y_\a $ into $K\times Y_{\a}$. \\
The zeros of the operator $T(\mu,\phi,\psi)$ then satisfy
\begin{equation}\label{3.10}
\left\{\begin{array}{ll}
-\Delta\phi=(2+\mu)e^{U_{\mu}} \left(e^{\frac{ \phi+\psi}2}+ e^{\frac{ \phi-\psi}2}-2\right)+64 L\frac{8-|x|^2}{(8+|x|^2)^3} & \hbox{ in }\R^2\\
-\Delta \psi=(2-\mu)e^{U_{\mu}} \left(e^{\frac{ \phi+\psi}2}- e^{\frac{ \phi-\psi}2}\right) & \hbox{ in }\R^2\\
\int_{\R^2}e^{U_{\mu}}e^{\frac{ \phi+\psi}2}<+\infty,\ \int_{\R^2}e^{U_{\mu}}e^{\frac{ \phi-\psi}2}<+\infty
\end{array}\right.
\end{equation}
where $L=L(\phi,\psi)\in \R$ and $\phi,\psi\in L^{\infty}$. Once we prove the existence of $\phi$ and $\psi$ verifying \eqref{3.10},
the final step will be to show that $L=0$ so that $\phi$ and $\psi$ are indeed solutions of \eqref{3.2}. This will be done in Section \ref{s4}.\\[.3cm]
To apply Theorem \ref{CR} to the operator  $T(\mu,\phi,\psi):(-2,2)\times \widetilde K  \times \widetilde Y_\a \to K\times Y_{\a}$ at the point $(\mu_n,0,0)$, we have to check hypotheses $a),\dots,d)$. 
This will be done in a series of technical lemmas.\\
Even if the statement and the proofs of these lemmas is quite long we prefer to include them in this section since this is the most delicate part of the proof.\\
We start proving the following
\begin{lemma}\label{lem3.1} We have that 
$ker\left(T'_{(\phi,\psi)}(\mu_n,0,0)\right)=span\left\{\left( \begin{array}{c} 0\\P_n(\frac{8-|x|^2}{8+|x|^2})\end{array}\right)\right\}$ and then it is one-dimensional.
\end{lemma}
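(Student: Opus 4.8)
The plan is to compute the kernel of the linearized operator
$$T'_{(\phi,\psi)}(\mu_n,0,0)\begin{pmatrix}w_1\\w_2\end{pmatrix}=\begin{pmatrix}w_1-P_K\big(M((2+\mu_n)e^{U_{\mu_n}}w_1)\big)\\ w_2-M((2-\mu_n)e^{U_{\mu_n}}w_2)\end{pmatrix}$$
directly, working inside the radial setting. The second component equation $w_2=M((2-\mu_n)e^{U_{\mu_n}}w_2)$ is, by Proposition \ref{CIb-properties}(ii), exactly the radial linearized equation \eqref{10}--\eqref{12} at the eigenvalue parameter $\mu=\mu_n$; since we are in the radial class, the solution space is one-dimensional, spanned by $P_n\big(\frac{8-|x|^2}{8+|x|^2}\big)$ (this is the $m=0$ case of \eqref{12}, the only radial associated Legendre polynomial $P_n^0=P_n$). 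Note $P_n\big(\frac{8-|x|^2}{8+|x|^2}\big)\in\widetilde Y_\a$ because it is bounded, smooth, and has $\Delta$ of the right decay.

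Next I would analyze the first component. The equation is $w_1=P_K\big(M((2+\mu_n)e^{U_{\mu_n}}w_1)\big)$. Applying $-\Delta$ and using that $-\Delta M(f)=f$ and that $\Delta P_K g$ differs from $\Delta g$ by a multiple of $\frac{8-|x|^2}{(8+|x|^2)^3}$ (the explicit form of the correction term appearing in \eqref{3.10}), one finds $-\Delta w_1=(2+\mu_n)e^{U_{\mu_n}}w_1 + c\,\frac{8-|x|^2}{(8+|x|^2)^3}$ for some constant $c$. Testing this identity against $\frac{8-|x|^2}{8+|x|^2}$ — which solves the homogeneous equation $-\Delta\varphi=(2+\mu_n)e^{U_{\mu_n}}\varphi$ — and integrating by parts shows the pairing of the right-hand side with $\varphi$ must vanish, which forces $c=0$ (since $\int_{\R^2}\frac{(8-|x|^2)^2}{(8+|x|^2)^4}\,dx\neq0$). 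Hence $w_1$ solves the genuine radial linearized equation $-\Delta w_1=(2+\mu_n)e^{U_{\mu_n}}w_1$, whose bounded radial solutions are, by Lemma \ref{6} (equation \eqref{8}), the multiples of $\frac{8-|x|^2}{8+|x|^2}$. But $w_1$ must lie in $\widetilde K$, i.e. $\int_{\R^2}(\Delta w_1)\frac{8-|x|^2}{8+|x|^2}\,dx=0$; computing this integral for $w_1=a\frac{8-|x|^2}{8+|x|^2}$ gives a nonzero multiple of $a$, so $a=0$ and $w_1\equiv0$.

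Combining the two steps: $w_1\equiv0$ and $w_2$ is an arbitrary multiple of $P_n\big(\frac{8-|x|^2}{8+|x|^2}\big)$, which is exactly the claimed one-dimensional kernel spanned by $\big(0,\,P_n(\tfrac{8-|x|^2}{8+|x|^2})\big)$.

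I expect the main technical obstacle to be the rigorous bookkeeping around the projection $P_K$: one must carefully justify that $P_K g - g$ is a scalar multiple of a fixed function (equivalently, identify the orthogonal complement of $K$ in $Y_\a$ under the $Y_\a$ inner product and check it is spanned by a function whose Laplacian is $\frac{8-|x|^2}{(8+|x|^2)^3}$ up to normalization), and that applying $-\Delta$ to the fixed-point identity is legitimate in $X_\a$ by Proposition \ref{CIa-properties}(iii). The rest — solving the scalar linear equations via the already-cited eigenvalue computation of \cite{GG} and the classification in Lemma \ref{6} — is essentially bookkeeping, as is the nonvanishing of the two explicit integrals, which are elementary.
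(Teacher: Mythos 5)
Your proposal is correct and follows the paper's overall strategy (decouple the kernel equation into the two scalar problems; the second component gives exactly the radial eigenfunction $P_n\bigl(\frac{8-|x|^2}{8+|x|^2}\bigr)$ via the spectral computation of \cite{GG}), but it handles the first component by a genuinely different mechanism. The paper solves the inhomogeneous radial ODE $-\Delta w_1-\frac{64}{(8+|x|^2)^2}w_1=64L\frac{8-|x|^2}{(8+|x|^2)^3}$ \emph{explicitly}: the general solution contains two terms with logarithmic growth, so boundedness ($w_1\in\widetilde Y_\a\subset L^\infty$) forces both the second homogeneous coefficient and $L$ to vanish, and then $w_1\in\widetilde K$ kills the remaining multiple of $\frac{8-|x|^2}{8+|x|^2}$. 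You instead obtain $L=0$ by duality, pairing the equation with $\varphi=\frac{8-|x|^2}{8+|x|^2}$ and integrating by parts. Your route is shorter and avoids the explicit quadrature, but be aware that the integration by parts is not free: $\int_{\R^2}(-\Delta w_1)\varphi\,dx=\int_{\R^2}w_1(-\Delta\varphi)\,dx$ requires controlling the boundary terms $Rw_1'(R)$ at infinity, since $\varphi\to-1$ and $w_1$ is only known to be bounded. This is precisely the content of the paper's Lemma \ref{lem3.6} (proved via a sequence $R_k\to+\infty$ with $R_kw_1'(R_k)\to0$), which the authors establish for use in Lemma \ref{lem3.5}; you should either invoke it or reproduce that argument, as the first term $\int(-\Delta w_1)\varphi$ already vanishes by the definition of $\widetilde K$ while the term $\int\frac{64(8-|x|^2)}{(8+|x|^2)^3}w_1$ needs Lemma \ref{lem3.6}. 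Your identification of $g-P_Kg$ with a multiple of $\varphi$ (so that $-\Delta(g-P_Kg)$ is a multiple of $\frac{8-|x|^2}{(8+|x|^2)^3}$) is exactly what the paper uses implicitly in writing \eqref{3.10} and \eqref{3.12-b}, so that step is consistent with the paper's own level of detail. The trade-off: the paper's explicit solution gives slightly more (it classifies all solutions, bounded or not, of the first equation), while your argument is cleaner and reuses the orthogonality machinery already needed elsewhere.
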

\begin{proof}
Let us consider the linearized operator of $T$ in $(\mu_n,0,0)$. We have that
\begin{equation}\nonumber
T'_{(\phi,\psi)}(\mu_n,0,0)\left(\begin{array}{l}
w_1\\
w_2
\end{array}\right)=\left(
\begin{array}{l}
w_1-P_{K}\left(M\left( (2+\mu_n)e^{U_{\mu_n}}w_1\right)\right)\\
w_2-M\left((2-\mu_n)e^{U_{\mu_n}}w_2\right)
\end{array}\right)
=\left(\begin{array}{l}0\\0\end{array}\right)
\end{equation}
if and only if $(w_1,w_2)\in  \widetilde K\times \widetilde Y_{\a} $ satisfies
\begin{equation}\label{3.12-b}
\left\{\begin{array}{ll}
-\Delta w_1-\frac{64}{(8+|x|^2)^2}w_1-64L\frac{8-|x|^2}{(8+|x|^2)^3}=0 & \hbox{ in }\R^2\\
-\Delta w_2-\frac{n(n+1)}2\frac{64}{(8+|x|^2)^2}w_2=0 & \hbox{ in }\R^2
\end{array}\right.
\end{equation}
for some $L=L(w_1)\in \R$. Observe that, in the second equation we used the definition of $\mu_n$.\\
The first equation in \eqref{3.12-b} can be explicitly solved and gives the solutions
\begin{eqnarray}\label{3.12-c}
&&w_1(|x|)=C_1\frac{8-|x|^2}{8+|x|^2}+C_2\frac{|x|^2\log |x|-8\log |x|-16}{8+|x|^2}+\nonumber\\
&&\frac23L\frac{2(|x|^2-8)\log |x|-(|x|^2-8)\log(8+|x|^2)-16}{8+|x|^2},\nonumber
\end{eqnarray}
which are bounded if and only if $C_2=L=0$. Hence $w_1(|x|)=C_1\frac{8-|x|^2}{8+|x|^2}$ and again $C_1=0$ because
$w_1\in \widetilde K$.\\
The second equation, as said in Lemma \ref{6}, has the nontrivial radial bounded solution  $P_n\left(\frac{8-|x|^2}{8+|x|^2}\right)$, so that
\begin{equation}\label{ker}
Ker\left(T'_{(\phi,\psi)}(\mu_n,0,0)\right)=span <\left(\begin{array}{c}
0\\
P_n\left(\frac{8-|x|^2}{8+|x|^2}\right)
\end{array}\right)>
\end{equation}
is one dimensional.
\end{proof}
\begin{lemma}\label{lem3.2}
Let $g\in X_\a$ be a radial function. Then the ordinary differential equation
\begin{equation}\label{ordinary}
-w''-\frac 1r w'-\frac{n(n+1)}2 \frac{64}{(8+r^2)^2}w=g(r) \quad \hbox{ in }(0,+\infty)
\end{equation}
has the solution $w$ given by
\begin{equation}\label{sol}
w(r)=-P_n\left(\frac{8-r^2}{8+r^2}\right)\left\{\int_0^r\frac 1{s\left(P_n\left(\frac{8-s^2}{8+s^2}\right)\right)^2} \int_0^s tP_n\left(\frac{8-t^2}{8+t^2}\right)g(t)\, dtds +C\right\}.
\end{equation}
\end{lemma}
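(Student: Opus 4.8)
The plan is to prove this by the classical variation‑of‑parameters (reduction of order) argument for the second order linear ODE \eqref{ordinary}, exploiting the fact that one homogeneous solution is explicitly available. First I would record that $w_1(r):=P_n\!\left(\frac{8-r^2}{8+r^2}\right)$ solves the equation obtained from \eqref{ordinary} by setting $g\equiv 0$: by the identity $\frac{2-\mu_n}{2+\mu_n}=\frac{n(n+1)}2$ in \eqref{11} and the $m=0$ radial eigenfunction found in the proof of Lemma \ref{6}, the function $P_n\!\left(\frac{8-|x|^2}{8+|x|^2}\right)$ satisfies $-\D w=\frac{n(n+1)}2\frac{64}{(8+|x|^2)^2}w$ in $\R^2$, which in radial coordinates is exactly $-w_1''-\frac1r w_1'-\frac{n(n+1)}2\frac{64}{(8+r^2)^2}w_1=0$. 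Multiplying \eqref{ordinary} by $r$ puts it in self‑adjoint form $-(rw')'-r\,\frac{n(n+1)}2\frac{64}{(8+r^2)^2}w=rg$; substituting $w=w_1 z$ the zeroth order terms in $z$ cancel because $w_1$ is a homogeneous solution, and using $(rw_1^2)'=w_1^2+2rw_1w_1'$ the equation collapses to $(rw_1^2 z')'=-rw_1 g$. Integrating once from $0$ to $s$, with the integration constant chosen so that $rw_1^2 z'\to 0$ as $r\to0^+$, gives $z'(s)=-\frac1{sw_1(s)^2}\int_0^s t\,w_1(t)g(t)\,dt$; a second integration from $0$ to $r$ with a free additive constant $C$ produces $z$, hence $w=w_1 z$ in the asserted form \eqref{sol}, and differentiating back shows $w$ solves \eqref{ordinary} on every subinterval of $(0,+\infty)$ on which $w_1$ does not vanish.

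Two convergence points then have to be dispatched. The inner integral is absolutely convergent: $w_1$ is bounded on $(0,+\infty)$, being a polynomial evaluated on $[-1,1]$, and $\int_0^{+\infty}t\,|g(t)|\,dt<+\infty$ since $X_\a\subset L^1(\R^2)$ by Proposition \ref{CIa-properties}. Near the origin the outer integrand is bounded, so the integral defining $z$ converges there: since $g\in L^2_{loc}(\R^2)$ by the definition of $X_\a$, Cauchy--Schwarz gives $\big|\int_0^s t\,w_1(t)g(t)\,dt\big|\le C\,s\,\big(\int_0^s t\,g(t)^2\,dt\big)^{1/2}=o(s)$ as $s\to0^+$, whence $\frac1{sw_1(s)^2}\int_0^s t\,w_1g\,dt=o(1)$ because $w_1(0)=P_n(1)=1\ne0$. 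At infinity $w_1(r)\to(-1)^n\ne0$ and the inner integral tends to a finite limit, so $z$, and therefore $w$, grows at most logarithmically, consistently with the space $Y_\a$.

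The delicate point, which I expect to be the main obstacle, is that $P_n$ has $n$ simple zeros in $(-1,1)$, so $w_1$ vanishes at $n$ interior points $0<r_1<\dots<r_n<+\infty$, where $\frac1{w_1(s)^2}$ has a non‑integrable singularity. One must verify that, after multiplication by the prefactor $P_n\!\left(\frac{8-r^2}{8+r^2}\right)$ in \eqref{sol}, these singularities are removable. Splitting the inner integral at $r_j$ as $\int_0^{r_j}+\int_{r_j}^s$: the first piece contributes to $z$ a term behaving like $\frac{c_j}{r_j\,w_1'(r_j)^2\,(r-r_j)}$ with $c_j=\int_0^{r_j}t\,w_1g\,dt$, whose product with $w_1(r)\sim w_1'(r_j)(r-r_j)$ stays finite, while the second piece contributes a bounded term; a local Taylor expansion near each $r_j$ — equivalently, uniqueness for the Cauchy problem for \eqref{ordinary}, whose coefficients are smooth on $(0,+\infty)$ — shows that $w$ extends across $r_j$ to a $C^2$ function solving \eqref{ordinary} there. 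With this understood, \eqref{sol} is a genuine solution of \eqref{ordinary} on all of $(0,+\infty)$.
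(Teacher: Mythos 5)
Your proposal is correct and follows essentially the same route as the paper: identify $P_n\left(\frac{8-r^2}{8+r^2}\right)$ as the bounded homogeneous solution, obtain \eqref{sol} by variation of constants (reduction of order in self-adjoint form), and use the simplicity of the zeros of $P_n$ to show that $w$ remains finite where the factor $1/P_n^2$ blows up. You supply more detail than the paper's brief argument --- in particular the convergence checks via $X_\a\subset L^1$ and $g\in L^2_{loc}$, and the extension across the zeros by uniqueness for the Cauchy problem --- but the substance is the same.
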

\begin{proof}
As explained in the proof of Lemma \ref{6} the homogeneous equation $-w''-\frac 1r w'-\frac{n(n+1)}2 \frac{64}{(8+r^2)^2}w=0$ has only the bounded solution $P_n\left(\frac{8-r^2}{8+r^2}\right)$. Then \eqref{sol} follows by a standard use of the variation of constants method. Let us show that $w$ is well defined at a zero of the Legendre function $P_n$.
Denote by $z_n\in (-1,1)$ a point where $P_n(z_n)=0$. Since the zeros of the Legendre function are {\em simple} we get $P_n(z)=\gamma_n(z-z_n)+O\left(|z-z_n|^2\right)$ in a neighborhood of $z_n$ and then it is not difficult to show that $\lim_{z\to z_n}w(z)<+\infty$. This ends the proof.
\end{proof}
\begin{lemma}\label{n5}
If $g\in X_\a$ then the function $w$ given in \eqref{sol} for $n=1$ satisfies the following asymptotic estimate
\begin{equation}\label{n6}
w(r)=-\left(\int_0^{+\infty}\frac{8-t^2}{8+t^2}tg(t)dt\right)\log r+O(1)\quad\hbox{as }r\rightarrow+\infty
\end{equation}
\end{lemma}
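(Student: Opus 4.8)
The plan is to substitute $n=1$ into \eqref{sol} and read off the logarithmic growth from the resulting iterated integral. For $n=1$ we have $P_1(t)=t$ by \eqref{a5}, so $P_1\!\left(\tfrac{8-r^2}{8+r^2}\right)=\tfrac{8-r^2}{8+r^2}=:\rho(r)$, the bounded homogeneous solution of \eqref{ordinary}. Setting $I(s):=\int_0^s t\,\rho(t)\,g(t)\,dt$, formula \eqref{sol} reads $w(r)=-\rho(r)\big(\int_0^r \frac{I(s)}{s\,\rho(s)^2}\,ds+C\big)$. The first observation is that, by Proposition \ref{CIa-properties}, $g\in X_\a\subset L^1(\R^2)$, hence $\int_0^\infty t|g(t)|\,dt<+\infty$; since $|\rho|\le 1$ this gives that $I(s)$ has a finite limit $I_\infty:=\int_0^\infty t\,\rho(t)\,g(t)\,dt=\int_0^\infty\frac{8-t^2}{8+t^2}\,t\,g(t)\,dt$ — exactly the coefficient in \eqref{n6} — together with the tail bound $|I(s)-I_\infty|\le \int_s^\infty t|g(t)|\,dt=:\tau(s)$. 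Because $\rho(r)\to -1$ and $\big(\rho(r)+1\big)\log r=\frac{16\log r}{8+r^2}=O(1)$, it suffices to show that $\int_0^r \frac{I(s)}{s\,\rho(s)^2}\,ds=I_\infty\log r+O(1)$ as $r\to+\infty$, and then collect signs.

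To prove this, fix $R_0>2\sqrt 2$. The integral over $(0,R_0)$ is a finite constant — the only subtlety is the apparent non-integrable singularity of $\frac{1}{s\,\rho(s)^2}$ at the zero $s=2\sqrt 2$ of $\rho$, but this is precisely the point already settled in the proof of Lemma \ref{lem3.2}, where the vanishing of the Legendre factor in front compensates it — so this piece is absorbed into $O(1)$. On $(R_0,r)$ one uses the elementary identity $\frac{1}{s\,\rho(s)^2}=\frac{(8+s^2)^2}{s(8-s^2)^2}=\frac{1}{s}+\frac{32s}{(8-s^2)^2}$. Since $I$ is bounded and $\frac{32s}{(8-s^2)^2}=O(s^{-3})$, the integral $\int_{R_0}^{\infty}\frac{32s}{(8-s^2)^2}I(s)\,ds$ converges and contributes $O(1)$; for the main term one writes $\int_{R_0}^r \frac{I(s)}{s}\,ds=I_\infty(\log r-\log R_0)+\int_{R_0}^r \frac{I(s)-I_\infty}{s}\,ds$, and the last integral is absolutely convergent on $(R_0,\infty)$ because, by Tonelli, $\int_{R_0}^{\infty}\frac{\tau(s)}{s}\,ds=\int_{R_0}^{\infty} t|g(t)|\log\frac{t}{R_0}\,dt$, which by Cauchy--Schwarz against the weight defining $X_\a$ (giving $\int_0^\infty t^{3+\a}g(t)^2\,dt<+\infty$) is bounded by $\big(\int_0^\infty t^{3+\a}g^2\,dt\big)^{1/2}\big(\int_{R_0}^\infty t^{-1-\a}(\log t)^2\,dt\big)^{1/2}<+\infty$, the last factor being finite precisely because $\a>0$.

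Collecting these estimates yields $\int_0^r \frac{I(s)}{s\,\rho(s)^2}\,ds=I_\infty\log r+O(1)$, and substituting back into $w(r)=-\rho(r)\big(\int_0^r \frac{I(s)}{s\,\rho(s)^2}\,ds+C\big)$, using $\rho(r)\to -1$ and $\big(\rho(r)+1\big)\log r=O(1)$, produces the asymptotic expansion \eqref{n6}. The only genuinely delicate point in the argument is the improper character of the representation \eqref{sol} at $s=2\sqrt 2$; since we only need the behaviour as $r\to+\infty$ this is circumvented by the splitting at $R_0$, so the real analytic input is the integrability statement $\int^{\infty} t|g(t)|\log t\,dt<+\infty$ for $g\in X_\a$, which is where positivity of the weight exponent $\a$ is used and which I expect to be the main (though mild) obstacle.
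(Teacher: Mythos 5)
Your proof follows essentially the same route as the paper's: both isolate the kernel expansion $\frac{(8+s^2)^2}{s(8-s^2)^2}=\frac1s+O\big(\frac1{(1+s)^3}\big)$, extract $\big(\int_0^{+\infty}\frac{8-t^2}{8+t^2}tg(t)\,dt\big)\log r$ from the $\frac1s$ part, and absorb everything else into $O(1)$ using $g\in X_\a$. The only methodological difference is in the treatment of the main remainder: the paper integrates by parts and uses $\int_0^r t\frac{8-t^2}{8+t^2}g(t)\,dt=\int_0^{+\infty}(\cdots)\,dt+O\big((1+r)^{-\a/2}\big)$, while you subtract the limit $I_\infty$ and control $\int_{R_0}^{\infty}\tau(s)\frac{ds}{s}$ by Tonelli plus Cauchy--Schwarz against the $X_\a$ weight. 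Both work, and yours makes the role of $\a>0$ more transparent. Your deferral to Lemma \ref{lem3.2} for the zero of $\rho$ at $s=2\sqrt2$ is acceptable, though note that for $R_0>2\sqrt2$ the bare integral $\int_0^{R_0}\frac{I(s)}{s\rho(s)^2}\,ds$ diverges whenever $I(2\sqrt2)\neq0$; the finite object is $\rho(r)$ times the suitably interpreted primitive, so the splitting at $R_0$ should really be phrased at the level of $w$ rather than of the integral.

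The one substantive issue is your final sentence. Your (correct) estimates give $\int_0^r\frac{I(s)}{s\rho(s)^2}\,ds=I_\infty\log r+O(1)$, and since $-\rho(r)\to+1$ (not $-1$) this yields $w(r)=+I_\infty\log r+O(1)$, which is the \emph{opposite} sign to \eqref{n6}; no amount of ``collecting signs'' turns it into \eqref{n6}. In fact the paper's own proof inserts an unexplained minus sign in the first line of \eqref{n7} (the kernel is $+\frac1s+O\big(\frac1{(1+s)^3}\big)$, not $-\frac1s+\cdots$), and that is precisely where the minus in \eqref{n6} comes from; your computation is the correct one. The discrepancy is harmless downstream, since Corollary \ref{lem3.3} only uses whether the coefficient of $\log r$ vanishes, but as written you assert a conclusion that your estimates do not deliver: either correct the sign in the statement you are proving or explain why it holds.
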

\begin{proof}
Note that since $g\in X_\a$ then the integral in \eqref{n6} is well defined.
Since $n=1$ the solution $w$ is given by
$$w(r)=-\frac{8-r^2}{8+r^2}\left(I(r)+C\right)$$
with
\begin{equation}\label{I(r)}
I(r)=\int_0^r\frac {(8+s^2)^2}{s(8-s^2)^2} \int_0^s t\frac{8-t^2}{8+t^2}g(t)\, dtds.
\end{equation}
Because $ \frac{8-r^2}{8+r^2}$ is bounded, we just consider the term $I(r)$ for $r$ large. Using that $ \frac {(8+s^2)^2}{s(8-s^2)^2}=\frac 1s+O\left(\frac 1{(1+s)^3}\right)$ for $s$ large enough we have that 
\begin{eqnarray}\label{n7}
&I(r)=& -\int_0^r \frac 1s\int_0^s t\frac{8-t^2}{8+t^2}g(t)\, dtds+\int_0^r O\left(\frac 1{(1+s)^3}\right) \int_0^s t\frac{8-t^2}{8+t^2}g(t)\, dtds\nonumber\\
&=&\hbox{(integrating by parts)}\nonumber\\
&=&-\log r\int_0^r t\frac{8-t^2}{8+t^2}g(t)\, dt+ \int_0^r t\log t \frac{8-t^2}{8+t^2}g(t)\, dt +\int_0^r O\left(\frac 1{(1+s)^3}\right) \int_0^s t\frac{8-t^2}{8+t^2}g(t)\, dtds\nonumber\\
&=&-\log r\int_0^r t\frac{8-t^2}{8+t^2}g(t)\, dt+I_1(r)+I_2(r)
\end{eqnarray}
Since $g\in X_\a$,
\begin{equation}\label{n8}
\int_0^r t\frac{8-t^2}{8+t^2}g(t)\, dt=\int_0^{+\infty} t\frac{8-t^2}{8+t^2}g(t)\, dt+O\left(\frac1{(1+r)^\frac\a2}\right)
\end{equation}
and
\begin{equation}\label{n9}
|I_1(r)|,|I_2(r)|=O(1).
\end{equation}
Finally by \eqref{n7}, \eqref{n8} and \eqref{n9} we have the claim.
\end{proof}
\begin{lemma}\label{n10}
If $g\in X_\a$ then the function $w$ given in \eqref{sol} for $n>1$ satisfies the following asymptotic estimate
\begin{equation}\label{n11}
w(r)=-\left(\int_0^{+\infty}P_n\left(\frac{8-t^2}{8+t^2}\right)tg(t)dt\right)\log r+O(1)\quad\hbox{as }r\rightarrow+\infty
\end{equation}
\end{lemma}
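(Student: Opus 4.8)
The plan is to imitate, step by step, the proof of Lemma~\ref{n5}; the only genuinely new feature is that for $n>1$ the function $r\mapsto P_n\!\left(\frac{8-r^2}{8+r^2}\right)$ has several zeros in $(0,+\infty)$ rather than just one. Set $\phi_n(r):=P_n\!\left(\frac{8-r^2}{8+r^2}\right)$, the bounded homogeneous solution of \eqref{ordinary}, and $h(s):=\int_0^s t\,P_n\!\left(\frac{8-t^2}{8+t^2}\right)g(t)\,dt$, so that by \eqref{sol} we have $w(r)=-\phi_n(r)\big\{\int_0^r\frac{h(s)}{s\,\phi_n(s)^2}\,ds+C\big\}$, read — exactly as in the proof of Lemma~\ref{lem3.2} — as the function obtained by continuing this expression across the finitely many points $0<r_1<\dots<r_k$ where $\phi_n$ vanishes (each coming from a simple zero $z_j\in(-1,1)$ of $P_n$ via $\frac{8-r_j^2}{8+r_j^2}=z_j$).

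Two elementary facts about the behaviour at $+\infty$ drive everything. First, since $\frac{8-r^2}{8+r^2}=-1+\frac{16}{8+r^2}$ and $P_n$ is a polynomial with $P_n(-1)=(-1)^n\neq 0$, one has $\phi_n(r)=P_n(-1)+O(r^{-2})$; in particular $\phi_n$ is bounded on $(0,+\infty)$, bounded away from $0$ for $r$ large, and
\[
\frac{1}{s\,\phi_n(s)^2}=\frac1s+O\!\left(\frac1{(1+s)^3}\right)\qquad\text{as }s\to+\infty .
\]
Second, $\phi_n$ being bounded and $g\in X_\a$ — hence $\int_s^{+\infty}t|g(t)|\,dt=O\big((1+s)^{-\a/2}\big)$ by Cauchy--Schwarz, cf. Proposition~\ref{CIa-properties} — the limit $h(+\infty)=\int_0^{+\infty}P_n\!\left(\frac{8-t^2}{8+t^2}\right)tg(t)\,dt$ exists and $h(s)=h(+\infty)+O\big((1+s)^{-\a/2}\big)$.

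Now fix $R>r_k$, so that $\phi_n$ does not vanish on $[R,+\infty)$; there the variation of constants formula (as in Lemma~\ref{lem3.2}, but based at $R$) represents $w$ as $w(r)=-\phi_n(r)\{J(r)+\widetilde C\}$, with $J(r)=\int_R^r\frac{h(s)}{s\,\phi_n(s)^2}\,ds$ and a suitable constant $\widetilde C$ (the part of \eqref{sol} coming from $[0,R]$ is absorbed into $\widetilde C$ by the continuation). Splitting $J$ via the displayed expansion, the remainder $\int_R^r O\big((1+s)^{-3}\big)h(s)\,ds$ is $O(1)$ since $h$ is bounded, while for the main part, using the decay of $h(s)-h(+\infty)$ (the analogue of \eqref{n7}--\eqref{n9}),
\[
J(r)=\int_R^r\frac{h(s)}{s}\,ds+O(1)=h(+\infty)\log r+\int_R^r\frac{h(s)-h(+\infty)}{s}\,ds+O(1)=h(+\infty)\log r+O(1),
\]
the last integral being $O(1)$ because $\int_R^{+\infty}\frac{|h(s)-h(+\infty)|}{s}\,ds\le C\int_R^{+\infty}s^{-1-\a/2}\,ds<+\infty$. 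Hence $J(r)+\widetilde C=h(+\infty)\log r+O(1)$, and multiplying by $-\phi_n(r)$, with $\phi_n(r)=P_n(-1)+O(r^{-2})$, gives \eqref{n11}.

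The one step I expect to require real care — and the only place where $n>1$ genuinely differs from $n=1$ — is giving the formula \eqref{sol} a meaning beyond the zeros of $\phi_n$: there the integrand $\frac{h(s)}{s\,\phi_n(s)^2}$ has a non-integrable singularity of order $(s-r_j)^{-2}$, so the iterated integral in \eqref{sol} must be interpreted through the gluing of Lemma~\ref{lem3.2}, and one has to be sure these finitely many, finitely located singularities do not affect the behaviour at infinity. Working on $[R,+\infty)$ with $R>r_k$ as above removes this difficulty, since there $\phi_n$ is bounded away from $0$ and the computation reduces cleanly to that of Lemma~\ref{n5}; and the simplicity of the zeros $z_j$ of $P_n$ — already exploited in Lemma~\ref{lem3.2} — is exactly what makes $w$ finite across each $r_j$, so that the constant $\widetilde C$ above is well defined.
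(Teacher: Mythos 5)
Your proof is correct and follows essentially the same route as the paper's: the paper's entire argument consists of establishing the expansion $\frac{1}{s\left(P_n\left(\frac{8-s^2}{8+s^2}\right)\right)^2}=\frac1s+O\left(\frac{1}{(1+s)^3}\right)$ (your first ``elementary fact'', which you derive equivalently from $P_n(-1)=(-1)^n\neq 0$ via Taylor expansion, where the paper differentiates $P_n^2$ composed with $\frac{8-s^2}{8+s^2}$) and then declaring that the proof of Lemma \ref{n5} can be repeated step by step; you actually carry out that repetition and, in addition, make precise the continuation of \eqref{sol} across the interior zeros of $P_n\left(\frac{8-r^2}{8+r^2}\right)$ by working on $[R,+\infty)$ with $R>r_k$, a point the paper leaves implicit. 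One cosmetic remark, which affects the paper's statement as much as your final line: since $-\phi_n(r)\to -P_n(-1)=-(-1)^n$, the coefficient of $\log r$ your computation actually produces is $-(-1)^n\int_0^{+\infty}P_n\left(\frac{8-t^2}{8+t^2}\right)tg(t)\,dt$, so \eqref{n11} holds literally only for even $n$ and with the opposite sign for odd $n$ (the same slip already occurs in \eqref{n6}--\eqref{n7} for $n=1$); this is harmless because Corollary \ref{lem3.3} only uses whether that integral vanishes.
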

\begin{proof}
We can repeat step by step the proof of lemma \ref{n5} once we have proved the following estimate,
\begin{equation}\label{comp-asint}
 \frac {1}{s\left(P_n\left(\frac{8-s^2}{8+s^2}\right)\right)^2}=\frac 1s+O\left(\frac 1{(1+s)^3}\right)\quad \hbox{ for }s\hbox{ large enough.}
\end{equation}
First observe that $\lim_{s\to +\infty}\left(P_n\left(\frac{8-s^2}{8+s^2}\right)\right)^2=\left(P_n(-1)\right)^2=1$ from the definition of the Legendre Polynomials. Then we have that
$$\frac {1}{s\left(P_n\left(\frac{8-s^2}{8+s^2}\right)\right)^2}-\frac 1s=\frac {1-\left(P_n\left(\frac{8-s^2}{8+s^2}\right)\right)^2}{s\left(P_n\left(\frac{8-s^2}{8+s^2}\right)\right)^2}$$
and, using that $P_n$ and $P'_n$ are both bounded in $(-1,1)$, 
$$\frac {1-\left(P_n\left(\frac{8-s^2}{8+s^2}\right)\right)^2}{s}=O\left(P_n\left(\frac{8-s^2}{8+s^2}\right)P'_n\left(\frac{8-s^2}{8+s^2}\right)\left(\frac{8-s^2}{8+s^2}\right)'\right)=O\left(\frac 1{(1+s)^3}\right)$$
for $s$ large enough. 
This proves \eqref{comp-asint} and completes the proof of the lemma.
\end{proof}
\begin{corollary}\label{lem3.3}
If $g\in X_\a$ is a radial function and $w$ is the solution to \eqref{ordinary} given in \eqref{sol}, we have that $w\in L^{\infty}_{loc}([0,+\infty))$ and
\begin{itemize}
\item[i)] if $ \int_{\R^2}P_n\left(\frac{8-|x|^2}{8+|x|^2}\right) g(|x|)\, dx=0$ then $w\in L^\infty([0,+\infty))$,
\item[ii)] if $ \int_{\R^2}P_n\left(\frac{8-|x|^2}{8+|x|^2}\right) g(|x|)\, dx\ne0$ then $w$ has a logarithmic growth at infinity.
\end{itemize}
\end{corollary}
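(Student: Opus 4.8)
The plan is to read off the two claims directly from the asymptotic expansions already established in Lemmas \ref{n5} and \ref{n10}, together with the local boundedness statement built into Lemma \ref{lem3.2}. First I would recall that Lemma \ref{lem3.2} already shows $w$ is well defined and finite at every point of $[0,+\infty)$: the only delicate points are the zeros $z_n\in(-1,1)$ of the Legendre polynomial, and there the simplicity of the zeros forces $P_n\!\left(\tfrac{8-r^2}{8+r^2}\right)=\gamma_n(z-z_n)+O(|z-z_n|^2)$, so the inner integral $\int_0^s tP_n(\cdots)g(t)\,dt$ also vanishes linearly in $(z-z_n)$ and the ratio in \eqref{sol} stays bounded. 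Combined with continuity of $w$ on any compact subinterval of $[0,+\infty)$ (which follows since $g\in X_\a$ is $L^2_{loc}$ and the kernels in \eqref{sol} are continuous away from the finitely many zeros), this gives $w\in L^{\infty}_{loc}([0,+\infty))$.

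Next I would invoke the asymptotic formula, using Lemma \ref{n5} when $n=1$ and Lemma \ref{n10} when $n>1$ (the case $n=0$ is trivial since then $P_0\equiv 1$ and the same computation as in Lemma \ref{n5} applies, or one simply notes $\mu_0=2$ is excluded), which in both cases reads
\begin{equation}\nonumber
w(r)=-\left(\int_0^{+\infty}P_n\left(\frac{8-t^2}{8+t^2}\right)tg(t)\,dt\right)\log r+O(1)\qquad\hbox{as }r\to+\infty.
\end{equation}
Passing to polar coordinates, $\int_{\R^2}P_n\!\left(\tfrac{8-|x|^2}{8+|x|^2}\right)g(|x|)\,dx=2\pi\int_0^{+\infty}P_n\!\left(\tfrac{8-t^2}{8+t^2}\right)tg(t)\,dt$, so the coefficient of $\log r$ in the expansion of $w$ is, up to the factor $-\tfrac1{2\pi}$, exactly the integral appearing in the statement. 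If that integral vanishes, the $\log r$ term disappears and $w(r)=O(1)$ as $r\to+\infty$; together with the local bound this yields $w\in L^{\infty}([0,+\infty))$, proving $(i)$. If the integral is nonzero, the expansion shows $|w(r)|\sim c|\log r|\to+\infty$ with $c\ne 0$, i.e. $w$ has exactly logarithmic growth at infinity, proving $(ii)$.

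I do not expect any genuine obstacle here: the corollary is essentially a repackaging of Lemmas \ref{n5}, \ref{n10} and \ref{lem3.2}. The only point requiring a word of care is the reduction of $n=1$ and $n>1$ to a single statement — one must make sure that the constant $C$ in \eqref{sol} only affects the $O(1)$ part (it multiplies the bounded function $P_n\!\left(\tfrac{8-r^2}{8+r^2}\right)$, since $P_n(\pm1)=\pm1$), so the coefficient of $\log r$ is independent of the choice of $C$ and is intrinsic to the forcing term $g$. With that observed, both parts follow immediately.
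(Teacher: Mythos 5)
Your argument is correct and is essentially the paper's own proof: local boundedness from the analysis at the zeros of $P_n$ in Lemma \ref{lem3.2}, then the dichotomy read off from the coefficient of $\log r$ in the expansions of Lemmas \ref{n5} and \ref{n10} (converted to an area integral by the factor $2\pi$). The one point you gloss over is the origin, which is also a delicate point since the factor $\frac{1}{s}$ in \eqref{sol} is singular there even though $P_n(1)=1\neq 0$; the paper disposes of it by observing that $g\in X_\a\subset L^2_{loc}$ gives $\int_0^s tP_n\left(\frac{8-t^2}{8+t^2}\right)g(t)\,dt=O(s)$, so the outer integrand stays bounded near $s=0$.
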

\begin{proof}
Since $g\in X_\a$ then $\int_0^st P_n\left(\frac{8-t^2}{8+t^2}\right)g(t)\, dt=O(s)$ for $s$ small enough. Then $\frac 1{s\left(P_n\left(\frac{8-s^2}{8+s^2}\right)\right)^2} \int_0^s tP_n\left(\frac{8-t^2}{8+t^2}\right)g(t)\, dt$ is uniformly bounded in a neighborhood of $0$ and so $w(r)$ is regular at the origin. Moreover, it is regular at a zero of the Legendre function $P_n$ and so it belongs to $L^{\infty}_{loc}([0,+\infty))$.
Finally $i)$ and $ii)$ follow by \eqref{n6} and \eqref{n11}.
\end{proof}
\begin{lemma}\label{N1}
We have that for any $g\in X_\a$ then $w\in Y_\a$ for any $n\ge1$.
\end{lemma}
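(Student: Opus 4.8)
The plan is to verify, for the function $w$ of \eqref{sol}, the three conditions defining membership in $Y_\a$: that $w\in W^{2,2}_{loc}(\R^2)$, that $\nor\Delta w\nor_{X_\a}<+\infty$, and that $\nor w/(1+|x|^{1+\frac\a2})\nor_{L^2(\R^2)}<+\infty$. The starting point is that, on $\R^2\setminus\{0\}$, the function $w$ solves classically
\[
-\Delta w=g+\frac{n(n+1)}2\,\frac{64}{(8+|x|^2)^2}\,w=:f,
\]
so the whole statement follows once we show $f\in X_\a$, and this in turn reduces to controlling the potential term $\frac{64}{(8+|x|^2)^2}\,w$.

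First I would record a uniform bound $|w(x)|\le C\big(1+\log^+|x|\big)$ on $\R^2$, with $C$ depending on $g$. This is obtained by combining two facts already established in this section: $w\in L^\infty_{loc}([0,+\infty))$ by Corollary \ref{lem3.3}, and $w(r)=-c\log r+O(1)$ as $r\to+\infty$ by Lemma \ref{n5} (for $n=1$) and Lemma \ref{n10} (for $n>1$), where $c=\int_0^{+\infty}P_n\big(\tfrac{8-t^2}{8+t^2}\big)tg(t)\,dt$. Since $\frac{64}{(8+|x|^2)^2}=O(|x|^{-4})$ as $|x|\to\infty$, the integrand $(1+|x|^{2+\a})\big(\frac{64}{(8+|x|^2)^2}\,w\big)^2$ is $O\big((\log|x|)^2|x|^{-6+\a}\big)$ for $|x|$ large, hence integrable on $\{|x|\ge1\}$ because $6-\a>2$, and it is bounded on $\{|x|\le1\}$; therefore $\frac{64}{(8+|x|^2)^2}\,w\in X_\a$. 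As $g\in X_\a$ by hypothesis, the displayed identity yields $f\in X_\a$, and in particular $\nor\Delta w\nor_{X_\a}=\nor f\nor_{X_\a}<+\infty$.

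Next I would show that $w$ coincides with $M(f)$ up to an additive constant, which produces the two remaining properties at once. By Proposition \ref{CIb-properties}, $M(f)\in Y_\a$ with $-\Delta M(f)=f$ on $\R^2$, and by Proposition \ref{CIa-properties}, $M(f)\in C^0_{loc}(\R^2)$ and satisfies the growth bound \eqref{3.5-bis}. Consider $h:=w-M(f)$: it is harmonic on $\R^2\setminus\{0\}$ and bounded near the origin (both $w$ and $M(f)$ lie in $L^\infty_{loc}$), so by the removable singularity theorem for bounded harmonic functions it extends to an entire harmonic function on $\R^2$; since $h$ grows at most logarithmically at infinity (by Step 1 together with \eqref{3.5-bis}), it must be constant. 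Hence $w=M(f)+\text{const}$; since constants lie in $Y_\a$ (their Laplacian vanishes and $1/(1+|x|^{1+\frac\a2})\in L^2(\R^2)$ because $\a>0$) and $M(f)\in Y_\a$, we conclude $w\in Y_\a$, and in particular $w\in W^{2,2}_{loc}(\R^2)$.

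The only genuinely delicate point is Step 1: one must take care to glue the local bound of Corollary \ref{lem3.3} and the asymptotics of Lemmas \ref{n5}--\ref{n10} into a truly global estimate for $w$, and to verify that the exponent $6-\a$ coming from the quadratic decay of $\frac{64}{(8+|x|^2)^2}$ indeed exceeds $2$, which is exactly where the hypothesis $\a\in(0,1)$ is used. Once $f\in X_\a$ is secured, the rest is a routine application of the mapping properties of $M$ recalled in Propositions \ref{CIa-properties}--\ref{CIb-properties}, together with the classical removable-singularity and Liouville theorems for harmonic functions in the plane.
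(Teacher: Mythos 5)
Your proof is correct, and its core step --- deducing $\nor\Delta w\nor_{X_\a}<+\infty$ from the identity $-\Delta w=g+\frac{n(n+1)}{2}\frac{64}{(8+|x|^2)^2}w$ combined with the global bound $|w(r)|\le C(1+\log(1+r))$ coming from Corollary \ref{lem3.3} --- is exactly the paper's argument. The only divergence is in how the two remaining, routine conditions are checked: the paper obtains $w\in W^{2,2}_{loc}(\R^2)$ directly from $g\in L^2_{loc}$ by interior elliptic regularity, and gets the weighted $L^2$ bound immediately from the same pointwise logarithmic estimate, whereas you take a detour through the identification $w=M(f)+\mathrm{const}$, justified by a removable-singularity and Liouville argument. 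Your route is valid (and, since everything here is radial, the harmonic difference is simply $a+b\log r$ with $b=0$ forced by boundedness at the origin), but it is heavier than necessary: the weighted $L^2$ condition already follows from the bound you establish in Step 1, with no need to invoke Proposition \ref{CIb-properties}. One small overstatement: the integrability at infinity of $(1+|x|^{2+\a})\bigl(\frac{64}{(8+|x|^2)^2}w\bigr)^2$ only requires $\a<4$, so this is not really the place where the hypothesis $\a\in(0,1)$ is used.
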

\begin{proof}
First we observe that, by assumption $g(|x|)\in L^{2}_{loc}(\R^2)$ so that $w(|x|)\in W^{2,2}_{loc}(\R^2)$.
Next, by Corollary \ref{lem3.3} we have that there exists $C$ such that $|w(r)|\leq C(1+\log(1+r))$. 
Then
$$\int_{\R^2}\frac{w^2}{\left( 1+|x|^{1+\frac {\a}2}\right)^2}\, dx<+\infty$$
and  
$$\int_{\R^2}\left(\Delta w\right)^2\left(1+|x|^{2+\a}\right)\le C\left( \int_{\R^2}\left( g(|x|)\right)^2\left(1+|x|^{2+\a}\right)+
\int_{\R^2}\frac{w^2}{\left(8+r^2\right)^4}\right)<+\infty$$
since $g\in X_\a$. This ends the proof.\\
\end{proof}
\begin{lemma}\label{lem3.6}
Set $w\in \widetilde K$. Then we have that 
\begin{equation}\label{q-fin}
\int_{\R^2} \frac {8-|x|^2}{(8+|x|^2)^3}w \, dx=0.
\end{equation}
\end{lemma}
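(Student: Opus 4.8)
The statement asserts that every element $w$ of the space $\widetilde K$ is automatically orthogonal (in the appropriate $L^2$-sense with weight $(8+|x|^2)^{-3}$) to the dilation eigenfunction $\frac{8-|x|^2}{8+|x|^2}$, even though the defining constraint of $\widetilde K$ is phrased in terms of $\int_{\R^2}(\Delta w)\frac{8-|x|^2}{8+|x|^2}\,dx=0$. The plan is therefore to convert the constraint on $\Delta w$ into the constraint \eqref{q-fin} by an integration-by-parts argument, using the fact that $\frac{8-|x|^2}{8+|x|^2}$ is itself a (radial, bounded) solution of the linearized Liouville equation $-\Delta \varphi=\frac{64}{(8+|x|^2)^2}\varphi$. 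Concretely, I would write $\varphi(x)=\frac{8-|x|^2}{8+|x|^2}$, note that $-\Delta\varphi=e^{U_0}\varphi=\frac{64}{(8+|x|^2)^2}\varphi$, and compute, on the ball $B_R$,
\begin{equation}\nonumber
\int_{B_R}(\Delta w)\,\varphi\,dx=\int_{B_R} w\,(\Delta\varphi)\,dx+\int_{\de B_R}\left(\varphi\,\de_\nu w-w\,\de_\nu\varphi\right)\,d\sigma
=-\int_{B_R}\frac{64}{(8+|x|^2)^2}w\,\varphi\,dx+\text{(boundary)}.
\end{equation}
Since $\varphi(x)=\frac{8-|x|^2}{8+|x|^2}$ we have $\frac{64}{(8+|x|^2)^2}\varphi=64\frac{8-|x|^2}{(8+|x|^2)^3}$, so the bulk term is exactly $-64\int_{B_R}\frac{8-|x|^2}{(8+|x|^2)^3}w\,dx$, i.e. a nonzero multiple of the quantity in \eqref{q-fin}.

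The substance of the argument is then to show that the boundary term $\int_{\de B_R}(\varphi\,\de_\nu w-w\,\de_\nu\varphi)\,d\sigma$ tends to $0$ along a suitable sequence $R\to+\infty$. Here I would use the decay properties built into $\widetilde Y_\a$ and the additional $L^\infty$-bound: $w$ is bounded, while $\varphi(x)=\frac{8-|x|^2}{8+|x|^2}\to -1$ and $\de_\nu\varphi=O(|x|^{-3})$, so the term $\int_{\de B_R}w\,\de_\nu\varphi\,d\sigma=O(R\cdot R^{-3})=O(R^{-2})\to0$. For the other term, $\varphi$ is bounded by $1$, so it suffices that $\int_{\de B_R}|\de_\nu w|\,d\sigma=o(1)$ along some sequence $R_k\to\infty$; this follows because $w\in Y_\a$ forces $\Delta w\in X_\a\subset L^1(\R^2)$ (Proposition \ref{CIa-properties}), hence $\int_{\R^2}|\Delta w|<\infty$, and combined with the radial identity $\frac{d}{dr}\big(r\,w'(r)\big)=r\,\Delta w$ one gets that $r\,w'(r)$ has a finite limit, while integrability of $|\Delta w|$ over $\R^2$ (in polar coordinates $\int_0^\infty |\Delta w(r)|\,r\,dr<\infty$) forces that limit to be $0$; thus $2\pi R\,|w'(R)|=2\pi|R\,w'(R)|\to0$. (If one prefers to avoid the radial reduction, a Fubini/mean-value argument picks out a sequence $R_k$ with $R_k|w'(R_k)|\to0$ directly from $\int|\Delta w|<\infty$.)

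Passing to the limit $R=R_k\to\infty$ in the Green identity then gives
\begin{equation}\nonumber
0=\int_{\R^2}(\Delta w)\,\frac{8-|x|^2}{8+|x|^2}\,dx=-64\int_{\R^2}\frac{8-|x|^2}{(8+|x|^2)^3}\,w\,dx,
\end{equation}
the left-hand side vanishing by the definition of $\widetilde K$, which is exactly \eqref{q-fin}.

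The main obstacle is the rigorous justification of the vanishing of the boundary flux: $Y_\a$-membership only gives logarithmic growth of $w$ itself (so $w$ need not decay), and it does not a priori control $\de_\nu w$ pointwise. The cleanest route, which I would follow, is to reduce to the radial ODE $(r w')'=r\,\Delta w$ and exploit $\Delta w\in X_\a\subset L^1$ to conclude $r w'(r)\to0$; this is where the extra hypothesis $w\in\widetilde Y_\a$ (bounded) rather than merely $w\in Y_\a$ is genuinely convenient, since it kills the $\int_{\de B_R}w\,\de_\nu\varphi$ term without any fuss.
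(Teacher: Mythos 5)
Your proposal follows essentially the same route as the paper: Green's identity on $B_R$ with $\varphi=\frac{8-|x|^2}{8+|x|^2}$, the observation that $-\Delta\varphi=\frac{64}{(8+|x|^2)^2}\varphi=64\frac{8-|x|^2}{(8+|x|^2)^3}$ turns the bulk term into the desired integral, and a vanishing-flux argument as $R\to\infty$. The treatment of the term $\int_{\de B_R}w\,\de_\nu\varphi$ via the boundedness of $w$ and the $O(R^{-3})$ decay of $\de_\nu\varphi$ matches the paper exactly.

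There is, however, one misstep in the justification of the flux term $\int_{\de B_R}\varphi\,\de_\nu w$. From $(rw')'=r\,\Delta w$ and $\Delta w\in X_\a\subset L^1(\R^2)$ you correctly get that $rw'(r)=\int_0^r s\,\Delta w(s)\,ds$ converges to a \emph{finite} limit $\ell=\int_0^\infty s\,\Delta w(s)\,ds$, but it is not true that integrability of $\Delta w$ ``forces that limit to be $0$'': for example $w=\log(1+r^2)$ has $\Delta w=4(1+r^2)^{-2}\in X_\a$ (so $w\in Y_\a$) and $rw'(r)\to 2$. What kills $\ell$ is precisely the $L^\infty$ bound on $w$: if $\ell\neq0$ then $w'(r)\sim \ell/r$ and $w$ grows logarithmically, contradicting $w\in\widetilde Y_\a$. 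This is exactly the contradiction the paper runs (assuming $|w'(R)|\geq\a/R$ for all large $R$ forces $w$ unbounded), except the paper only extracts a subsequence $R_k$ with $R_kw'(R_k)\to0$, which suffices; your version, once repaired by invoking boundedness of $w$ here as well, actually gives the full limit $rw'(r)\to0$. With that one-line fix the proof is complete and equivalent to the paper's.
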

\begin{proof}
Let us denote by $B_R$ the ball centered at the origin and radius $R$. Integrating by parts twice we get
\begin{equation}\label{***}
\int_{B_R}\Delta w \frac {8-|x|^2}{8+|x|^2}\, dx=2\pi\left( Rw'(R)+Rw(R)\frac{16R}{(8+R^2)^2}\right)-\int_{B_R}\frac {64(8-|x|^2)}{(8+|x|^2)^3}w \, dx.
\end{equation}
We claim that there exists $R_k\to +\infty$ as $k\to +\infty$ such that
\begin{equation}\label{****}
R_kw'(R_k)\to 0.
\end{equation}
By contradiction let us assume that there exists $\a>0$ such that $|w'(R)|\geq \frac{\a}R$ for any $R\geq R_0$. This implies that $w$ is not bounded and so \eqref{****} holds.\\
Putting $R=R_k$ in \eqref{***} we obtain (since $w\in \widetilde K$) 
\begin{eqnarray}
&&0=\int_{\R^2} \Delta w \frac {8-|x|^2}{8+|x|^2}\, dx=\lim_{k\to +\infty}\int_{B_{R_k}}\Delta w \frac {8-|x|^2}{8+|x|^2}\, dx\nonumber\\
&&=\lim_{k\to +\infty}2\pi\left( R_kw'(R_k)+w(R_k)\frac{16R^2_k}{(8+R_k^2)^2}\right)-\int_{B_{R_k}}\frac {64(8-|x|^2)}{(8+|x|^2)^3}w \, dx\nonumber\\
&&=-\int_{\R^2} \frac {64(8-|x|^2)}{(8+|x|^2)^3}w \, dx\nonumber
\end{eqnarray}
and the proof is complete.
\end{proof}
\begin{lemma}\label{lem3.5}
We have that the range $R\left(T'_{(\phi,\psi)}(\mu_n,0,0)\right)\subset K\times Y_{\a}$ has codimension one and it is given by the functions $(f_1,f_2)\in K\times Y_{\a}$ that satisfies 
\begin{equation}\label{condition}
\int_{\R^2}P_n\left(\frac{8-|x|^2}{8+|x|^2}\right)\Delta f_2\, dx=0.
\end{equation} 
\end{lemma}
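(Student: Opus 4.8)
The plan is to compute the range of $T'_{(\phi,\psi)}(\mu_n,0,0)$ componentwise. Recall from \eqref{3.8}-\eqref{3.9} that
$$
T'_{(\phi,\psi)}(\mu_n,0,0)\binom{w_1}{w_2}=\binom{w_1-P_K\bigl(M((2+\mu_n)e^{U_{\mu_n}}w_1)\bigr)}{w_2-M((2-\mu_n)e^{U_{\mu_n}}w_2)},
$$
so that $(f_1,f_2)$ lies in the range iff the system
$$
w_1-P_K\bigl(M((2+\mu_n)e^{U_{\mu_n}}w_1)\bigr)=f_1,\qquad w_2-M((2-\mu_n)e^{U_{\mu_n}}w_2)=f_2
$$
is solvable for $(w_1,w_2)\in\widetilde K\times\widetilde Y_\a$. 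The two equations are uncoupled, so I would treat them separately. For the first equation, I expect to show it is \emph{always} solvable, i.e.\ $w_1\mapsto w_1-P_K(M((2+\mu_n)e^{U_{\mu_n}}w_1))$ is onto $K$: the point is that the obstruction to inverting $-\Delta w_1-(2+\mu_n)e^{U_{\mu_n}}w_1=f_1$ in the full space is exactly the eigenfunction $\frac{8-|x|^2}{8+|x|^2}$ (by Lemma \ref{6}, case $i)$ with $z\equiv0$, this is the only bounded homogeneous solution), and the projection $P_K$ together with the restriction to $\widetilde K$ is precisely designed to kill that cokernel direction; Lemma \ref{lem3.6} guarantees the compatibility condition $\int \frac{8-|x|^2}{(8+|x|^2)^3}w_1\,dx=0$ is automatic on $\widetilde K$, so the Lagrange-multiplier term $64L\frac{8-|x|^2}{(8+|x|^2)^3}$ can be chosen to restore solvability. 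Hence the first component imposes no condition on $(f_1,f_2)$.

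The work is therefore in the second equation. Writing $-\Delta w_2-\frac{n(n+1)}{2}\frac{64}{(8+|x|^2)^2}w_2=-\Delta f_2=:g$, and noting $g=-\Delta f_2\in X_\a$ since $f_2\in Y_\a$ (Proposition \ref{CIa-properties}(iii)), this is exactly the ODE \eqref{ordinary} in radial form. By Lemma \ref{lem3.2} it has the explicit solution \eqref{sol}, and by Lemma \ref{N1} that solution lies in $Y_\a$; I still need it in $\widetilde Y_\a=Y_\a\cap L^\infty$. By Corollary \ref{lem3.3}, the solution $w_2$ is bounded — i.e.\ lies in $\widetilde Y_\a$ — precisely when $\int_{\R^2}P_n\bigl(\frac{8-|x|^2}{8+|x|^2}\bigr)g(|x|)\,dx=0$, that is, when
$$
\int_{\R^2}P_n\Bigl(\frac{8-|x|^2}{8+|x|^2}\Bigr)\,\Delta f_2\,dx=0,
$$
which is \eqref{condition} (up to the sign, which is irrelevant for the vanishing). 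Conversely, if this integral is nonzero the solution grows logarithmically (Corollary \ref{lem3.3}(ii)) and is not in $\widetilde Y_\a$, while the homogeneous solutions that could be added, $P_n(\frac{8-|x|^2}{8+|x|^2})$, are bounded and cannot cancel a logarithm — so no solution in $\widetilde Y_\a$ exists. This shows \eqref{condition} is both necessary and sufficient, exhibiting the range as the kernel of the bounded linear functional $(f_1,f_2)\mapsto\int P_n(\frac{8-|x|^2}{8+|x|^2})\Delta f_2\,dx$ on $K\times Y_\a$, which is nonzero (e.g.\ test against an $f_2$ whose Laplacian is a bump), hence the range is closed of codimension one.

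The main obstacle I anticipate is the careful verification that the first component is genuinely surjective onto $K$ within the constrained space $\widetilde K$: one must check that for given $f_1\in K$ (bounded, since we work in $\widetilde Y_\a$) the solution $w_1$ produced by the variation-of-constants formula, after absorbing the logarithmically growing homogeneous mode into the choice of $L$, is actually bounded and satisfies the constraint $\int(\Delta w_1)\frac{8-|x|^2}{8+|x|^2}\,dx=0$ defining $\widetilde K$ — this is where the bookkeeping between $P_K$, the multiplier $L$, and Lemma \ref{lem3.6} has to be done cleanly. Once that is settled, the codimension-one conclusion for the second component is a direct application of Corollary \ref{lem3.3}, and combining the two gives the statement.
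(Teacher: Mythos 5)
Your proposal is correct and follows essentially the same route as the paper: treat the two uncoupled equations separately, show the first component is always solvable by combining the projection $P_K$, the placement of $w_1$ in $\widetilde K$, and Lemma \ref{lem3.6} (which forces the multiplier $L(w_1)$ to vanish), and then read off condition \eqref{condition} for the second component from Corollary \ref{lem3.3}. The one point you flag as an obstacle is handled in the paper exactly as you anticipate, by solving the $n=1$ ODE with $g=-\Delta f_1$ (bounded since $f_1\in K$) and adding a multiple of $\frac{8-|x|^2}{8+|x|^2}$ to land in $\widetilde K$.
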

\begin{proof}
Let $(f_1,f_2)\in K\times Y_{\a}$. Our claim is equivalent to prove that 
\begin{equation}\label{range}
T'_{(\phi,\psi)}(\mu_n,0,0)\left(\begin{array}{l} 
w_1\\
w_2
\end{array}\right)=\left(\begin{array}{l} 
f_1\\
f_2
\end{array}\right)
\end{equation}
has a solution $(w_1,w_2)\in \widetilde K\times \widetilde Y_{\a}$ for any $f_1\in K$ and  $f_2$ satisfying \eqref{condition}.
On the other hand \eqref{range} is equivalent to find $(w_1,w_2)\in \widetilde K\times \widetilde Y_{\a}$  that solves
\begin{eqnarray}
&-\Delta w_1-(2+\mu_n)e^{U_{\mu_n}}w_1=-\Delta f_1 +L\frac {8-|x|^2}{(8+|x|^2)^3}& \hbox{ in }\R^2\label{eq-1}\\
&-\Delta w_2-(2-\mu_n)e^{U_{\mu_n}}w_2=-\Delta f_2 & \hbox{ in }\R^2\label{eq-2}
\end{eqnarray}
for any $(f_1,f_2)\in K\times Y_{\a}$ 
and for some $L=L(w_1)\in \R$. From the definition of the space $K$ and of the operator $M$  we have that 
$$L(w_1)=-\frac{\int_{\R^2}\frac {8-|x|^2}{(8+|x|^2)^3}w_1\, dx}{\int_{\R^2}\frac {(8-|x|^2)^2}{(8+|x|^2)^4}\, dx}.$$
Let us consider the function $\widetilde w_1(r)$ which solves \eqref{ordinary} with $n=1$ and $g(r)=-\Delta f_1$. Since $f_1\in K$ we get from Corollary \ref{lem3.3} and Lemma \ref{N1} that $\widetilde w_1\in \widetilde Y_\a$.\\
Finally, setting $w_1=\widetilde w_1+A\frac {8-|x|^2}{8+|x|^2}$ with $A=-\frac{\int_{\R^2}\Delta \widetilde w_1 \frac {8-|x|^2}{8+|x|^2}\, dx}{\int_{\R^2}\frac {64(8-|x|^2)^2}{(8+|x|^2)^4}\, dx}$ we get that $w_1\in \widetilde K$ and satisfies $-\Delta w_1-(2+\mu_n)e^{U_{\mu_n}}w_1=-\Delta f_1$ in $\R^2$. On the other hand, by Lemma \ref{lem3.6}, since $w_1\in \widetilde K$ we have that $\int_{\R^2} \frac {8-|x|^2}{(8+|x|^2)^3}w_1 
\, dx=0$ and then $L(w_1)=0$. So $w_1$ satisfies \eqref{eq-1}.\\
For what concerns equation \eqref{eq-2}, using again Corollary \ref{lem3.3}, we have that the solution $w_2$ belongs to $\widetilde Y_\a$ if and only if $f_2$ satisfies \eqref{condition}.
This concludes the proof. 
\end{proof}
\begin{lemma}\label{lem3.7}
We have that 
\begin{equation}\label{T''}
T''_{\mu,(\phi,\psi)}(\mu_n,0,0)\left(\begin{array}{c} 0\\
P_n\left(\frac{8-r^2}{8+r^2}\right)\end{array}\right)\notin R\left(T'_{(\phi,\psi)}(\mu_n,0,0)\right).
\end{equation}
\end{lemma}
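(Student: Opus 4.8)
The plan is to compute the mixed second Fr\'echet derivative $T''_{\mu,(\phi,\psi)}(\mu_n,0,0)$, apply it to the kernel generator $w_0=\left(0,P_n\left(\frac{8-r^2}{8+r^2}\right)\right)$ produced in Lemma \ref{lem3.1}, and then check that the resulting pair fails the range characterization \eqref{condition} of Lemma \ref{lem3.5}. Concretely, I would show that the second component $f_2$ of $T''_{\mu,(\phi,\psi)}(\mu_n,0,0)w_0$ satisfies $\int_{\R^2}P_n\!\left(\frac{8-|x|^2}{8+|x|^2}\right)\Delta f_2\,dx\neq 0$, which by Lemma \ref{lem3.5} is exactly the obstruction that places $(0,f_2)$ outside $R\left(T'_{(\phi,\psi)}(\mu_n,0,0)\right)$.

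First I would write $T'_{(\phi,\psi)}(\mu,0,0)$ at a general $\mu\in(-2,2)$, which (exactly as in the proof of Lemma \ref{lem3.1}, linearizing \eqref{3.9} at $(\phi,\psi)=(0,0)$) is the map sending $(w_1,w_2)$ to $\bigl(w_1-P_K(M((2+\mu)e^{U_{\mu}}w_1)),\,w_2-M((2-\mu)e^{U_{\mu}}w_2)\bigr)$, and then differentiate in $\mu$. The crucial observation is that, by \eqref{4}, the coefficient $(2+\mu)e^{U_{\mu}}=\frac{64}{(8+|x|^2)^2}$ of the first equation is \emph{independent} of $\mu$, so the first component of $T''_{\mu,(\phi,\psi)}$ vanishes identically. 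For the second component $(2-\mu)e^{U_{\mu}}=\frac{2-\mu}{2+\mu}\cdot\frac{64}{(8+|x|^2)^2}$ and $\frac{d}{d\mu}\frac{2-\mu}{2+\mu}=-\frac{4}{(2+\mu)^2}$, which yields
$$T''_{\mu,(\phi,\psi)}(\mu_n,0,0)\left(\begin{array}{c}0\\ P_n\left(\frac{8-r^2}{8+r^2}\right)\end{array}\right)=\left(\begin{array}{c}0\\[2mm]\dfrac{256}{(2+\mu_n)^2}\,M\!\left(\dfrac{P_n\left(\frac{8-|x|^2}{8+|x|^2}\right)}{(8+|x|^2)^2}\right)\end{array}\right).$$
Since $P_n$ is bounded on $[-1,1]$, the argument of $M$ lies in $X_\a$, so this pair indeed belongs to $K\times Y_\a$ by Propositions \ref{CIa-properties}--\ref{CIb-properties} (the first component being $0\in K$).

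Then I would test $f_2:=\frac{256}{(2+\mu_n)^2}M\!\left(\frac{P_n\left(\frac{8-|x|^2}{8+|x|^2}\right)}{(8+|x|^2)^2}\right)$ against \eqref{condition}. From $-\Delta M(f)=f$ one gets $-\Delta f_2=\frac{256}{(2+\mu_n)^2}\cdot\frac{P_n\left(\frac{8-|x|^2}{8+|x|^2}\right)}{(8+|x|^2)^2}$, hence
$$\int_{\R^2}P_n\!\left(\frac{8-|x|^2}{8+|x|^2}\right)\Delta f_2\,dx=-\frac{256}{(2+\mu_n)^2}\int_{\R^2}\frac{\left(P_n\left(\frac{8-|x|^2}{8+|x|^2}\right)\right)^2}{(8+|x|^2)^2}\,dx<0,$$
because $2+\mu_n>0$ (as $\mu_n>-2$) and the integrand is nonnegative and not identically zero. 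By Lemma \ref{lem3.5} this shows $T''_{\mu,(\phi,\psi)}(\mu_n,0,0)w_0\notin R\left(T'_{(\phi,\psi)}(\mu_n,0,0)\right)$, i.e.\ \eqref{T''}.

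I do not expect a real obstacle here: once one notices the $\mu$-independence of $(2+\mu)e^{U_{\mu}}$ the whole computation is essentially one line of differentiation, followed by the strict positivity of an explicit integral. The only points requiring (routine) care are the existence and continuity of the Fr\'echet derivatives so that $T''_{\mu,(\phi,\psi)}$ is well defined — this is hypothesis b) of Theorem \ref{CR}, and follows from differentiating under the sign of $M$ together with the local boundedness of the exponential terms on bounded subsets of $\widetilde Y_\a$ — and the verification that $f_2\in Y_\a$, which is immediate from the $|x|^{-4}$ decay of $(8+|x|^2)^{-2}$. The genuinely essential input is the range description already established in Lemma \ref{lem3.5}.
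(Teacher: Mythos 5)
Your proposal is correct and follows essentially the same route as the paper: compute $T''_{\mu,(\phi,\psi)}$ (whose first component vanishes since $(2+\mu)e^{U_\mu}$ is $\mu$-independent), apply it to the kernel generator, and show the second component violates the range condition \eqref{condition} because $\int_{\R^2}\frac{1}{(8+|x|^2)^2}\left(P_n\left(\frac{8-|x|^2}{8+|x|^2}\right)\right)^2dx>0$. The only (immaterial) discrepancy is an overall sign in the formula for the second component, which does not affect the non-vanishing of the integral.
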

\begin{proof}
We have that $T''_{\mu,(\phi,\psi)}(\tilde \mu, \tilde\phi,\tilde \psi)$ is given by
\begin{eqnarray}
&\left(\begin{array}{cc}
0 & 0\\
M\left(-\frac{128}{(2+\tilde \mu)^2(8+|x|^2)^2}\left(e^{\frac{ \tilde\phi+\tilde\psi}2}- e^{\frac{ \tilde\phi-\tilde\psi}2}\right)I\right) & M\left(-\frac{128}{(2+\tilde \mu)^2(8+|x|^2)^2}\left(e^{\frac{ \tilde\phi+\tilde\psi}2}+ e^{\frac{ \tilde\phi-\tilde\psi}2}\right)I\right)
\end{array}\right) &\nonumber
\end{eqnarray}
so that 
$$T''_{\mu,(\phi,\psi)}(\mu_n,0,0)\left(\begin{array}{c} 0\\
P_n\left(\frac{8-|x|^2}{8+|x|^2}\right)\end{array}\right)=\left(\begin{array}{c} 0\\
M\left(-\frac{256}{(2+\tilde \mu)^2(8+|x|^2)^2}P_n\left(\frac{8-|x|^2}{8+|x|^2}\right)\right)\end{array}\right).$$
This proves \eqref{T''} since          
\begin{eqnarray}
&&\int_{\R^2}\left(-\Delta \left(M\left(-\frac{256}{(2+\tilde \mu)^2(8+|x|^2)^2}P_n\left(\frac{8-|x|^2}{8+|x|^2}\right)\right)\right)\right)P_n\left( \frac{8-|x|^2}{8+|x|^2}\right)\, dx \nonumber\\
&&=\frac{-256}{(2+\mu_n)^2}\int_{\R^2}\frac{1}{(8+|x|^2)^2}\left(P_n\left(\frac{8-|x|^2}{8+|x|^2}\right)\right)^2\, dx\neq 0\nonumber
\end{eqnarray}
and the Range of $T'_{(\phi,\psi)}(\mu_n,0,0)$ is characterized by \eqref{condition}. 
\end{proof}
Now we have all the ingredients to apply Theorem \ref{CR} and we have the following result:
\begin{theorem}\label{t3.2}
Let us fix $n \in \N$  and let $\mu_n$ be as defined in \eqref{a1}. Then the
points $(\mu_n,0,0)$ are radial  bifurcation points for the curve of solutions $(\mu,0,0)$, i.e. for $\e>0$ small enough there exist $u_\e$, $v_\e$
satisfying
\begin{equation}\label{c1}
\left\{\begin{array}{ll}
-\Delta u_\e=2e^{u_\e}+\mu_\e e^{v_\e}+64L_\e\frac{8-|x|^2}{\left(8+|x|^2\right)^3} & \hbox{ in }\R^2\\
-\Delta v_\e=2e^{v_\e}+\mu_\e e^{u_\e}+64L_\e\frac{8-|x|^2}{\left(8+|x|^2\right)^3} & \hbox{ in }\R^2\\
\int_{\R^2}e^{u_\e}\le C,\ \int_{\R^2}e^{v_\e}\le C
\end{array}\right.
\end{equation}
for some Lagrange multiplier $L_\e$. Moreover we have that, for $\e$ small enough,
\begin{equation}\label{fin}
\left\{\begin{array}{ll}
u_\e=U_{\mu_{\e,n}}+\e P_n\left(\frac{8-|x|^2}{8+|x|^2}\right)+\e(z_{1,\e}+z_{2,\e})\\
v_\e=U_{\mu_{\e,n}}-\e P_n\left(\frac{8-|x|^2}{8+|x|^2}\right)+\e(z_{1,\e}-z_{2,\e}) 
\end{array}\right.
\end{equation}
with $\mu_\e$ such that $\mu_0=\mu_n$ and some functions $z_{1,\e},z_{2,\e}$ such that $z_{1,0}=z_{2,0}=0$ and $z_{1,\e},z_{2,\e}$ are uniformly bounded in $L^\infty(\R^2)$.
\end{theorem}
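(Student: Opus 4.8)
The plan is to apply the Crandall--Rabinowitz bifurcation theorem (Theorem \ref{CR}) to the operator $T:(-2,2)\times\widetilde K\times\widetilde Y_\a\to K\times Y_\a$ of \eqref{3.9} at the point $(\mu_n,0,0)$, and then to unwind its conclusion into \eqref{c1}--\eqref{fin}. Hypotheses c) and d) of Theorem \ref{CR} are exactly the content of Lemmas \ref{lem3.1}, \ref{lem3.5} and \ref{lem3.7}: the kernel of $T'_{(\phi,\psi)}(\mu_n,0,0)$ is the one-dimensional span of $w_0:=\bigl(0,\,P_n(\tfrac{8-|x|^2}{8+|x|^2})\bigr)$, its range has codimension one in $K\times Y_\a$ (characterised by \eqref{condition}), and $T''_{\mu,(\phi,\psi)}(\mu_n,0,0)w_0\notin R\bigl(T'_{(\phi,\psi)}(\mu_n,0,0)\bigr)$. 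Hypothesis a), that $T(\mu,0,0)=0$ for $|\mu|<2$, is immediate since the nonlinear terms in \eqref{3.9} vanish at $\phi=\psi=0$ and $P_K$ fixes $0$.

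The only hypothesis requiring genuine work is b), the existence and continuity of $T'_\mu$, $T'_{(\phi,\psi)}$ and $T''_{\mu,(\phi,\psi)}$ near $(\mu_n,0,0)$, and I would obtain it by exhibiting $T$ as a composition of smooth maps. Since $\mu_n>-2$, the weight $e^{U_\mu}=\tfrac{64}{(2+\mu)(8+|x|^2)^2}$ depends real-analytically on $\mu$ with values in $X_\a$ (because $(8+|x|^2)^{-2}\in X_\a$ for $\a\in(0,1)$); the superposition map $(\phi,\psi)\mapsto e^{(\phi+\psi)/2}+e^{(\phi-\psi)/2}-2$ is $C^\infty$ from $L^\infty(\R^2)\times L^\infty(\R^2)$ into $L^\infty(\R^2)$, using the elementary bound $|e^t-1-t|\le \tfrac12 t^2 e^{|t|}$ and its analogues for the higher-order remainders; multiplication $X_\a\times L^\infty(\R^2)\to X_\a$ is bilinear and bounded since $\|e^{U_\mu}g\|_{X_\a}\le \|g\|_\infty\,\|e^{U_\mu}\|_{X_\a}$; and finally $M\in L(X_\a,Y_\a)$ by Proposition \ref{CIb-properties}, while $P_K$ is bounded linear. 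Composing, $T$ is $C^\infty$ jointly in $(\mu,\phi,\psi)$, which gives b) with room to spare.

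With a)--d) in hand, Theorem \ref{CR} yields $\e_0>0$, a complement $Z$ of $\mathrm{span}\{w_0\}$ in $\widetilde K\times\widetilde Y_\a$, and continuous maps $\eta:(-\e_0,\e_0)\to\R$ and $z=(z_{1,\e},z_{2,\e}):(-\e_0,\e_0)\to Z$ with $\eta(0)=0$, $z(0)=0$, such that the non-trivial zeros of $T$ near $(\mu_n,0,0)$ are precisely
$$\bigl(\mu_n+\eta(\e),\ \e z_{1,\e},\ \e P_n(\tfrac{8-|x|^2}{8+|x|^2})+\e z_{2,\e}\bigr),\qquad |\e|<\e_0.$$
Putting $\mu_\e:=\mu_n+\eta(\e)$, $\phi_\e:=\e z_{1,\e}$, $\psi_\e:=\e P_n(\tfrac{8-|x|^2}{8+|x|^2})+\e z_{2,\e}$ and recalling the relations \eqref{n3}--\eqref{n4}, i.e. $u=U_\mu+\tfrac12(\phi+\psi)$ and $v=U_\mu+\tfrac12(\phi-\psi)$, one reads off functions $u_\e,v_\e$ of the form \eqref{fin} (after the inessential rescaling of $\e$ and of the $z_{i,\e}$ absorbing the factors $\tfrac12$). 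Since these triples are zeros of $T$, they satisfy the Euler--Lagrange system \eqref{3.10}, which via \eqref{n3}--\eqref{n4} is exactly \eqref{c1} with Lagrange multiplier $L_\e:=L(\phi_\e,\psi_\e)\in\R$.

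It remains to record the qualitative statements. From $z(0)=0$ we get $z_{1,0}=z_{2,0}=0$; from the continuity of $\e\mapsto z(\e)\in\widetilde Y_\a\times\widetilde Y_\a$ together with the inclusion $\widetilde Y_\a\subset L^\infty(\R^2)$ (with $\|\cdot\|_\infty\le\|\cdot\|_{\widetilde Y_\a}$), the functions $z_{1,\e},z_{2,\e}$ are uniformly bounded in $L^\infty(\R^2)$ as $\e\to0$ (indeed they tend to $0$), and hence so are $\phi_\e$ and $\psi_\e$. Finally $e^{u_\e}=e^{U_{\mu_\e}}e^{(\phi_\e+\psi_\e)/2}$ and $e^{v_\e}=e^{U_{\mu_\e}}e^{(\phi_\e-\psi_\e)/2}$ with $e^{U_{\mu_\e}}\in L^1(\R^2)$ uniformly for $\mu_\e$ near $\mu_n$ and the remaining factors uniformly bounded, so $\int_{\R^2}e^{u_\e}\le C$ and $\int_{\R^2}e^{v_\e}\le C$. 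The genuine difficulty of the theorem lies entirely upstream, in forcing the kernel of the linearization to be one-dimensional --- this is the purpose of the projection $P_K$, which excises the scaling eigenfunction $\tfrac{8-|x|^2}{8+|x|^2}$; once Lemmas \ref{lem3.1}, \ref{lem3.5} and \ref{lem3.7} are available, the remaining steps are the smoothness check of b) and the bookkeeping above.
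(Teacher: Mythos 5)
Your proposal is correct and follows essentially the same route as the paper: verify hypotheses a)--d) of Theorem \ref{CR} for the projected operator $T$ on $\widetilde K\times\widetilde Y_\a$ (with c) and d) supplied by Lemmas \ref{lem3.1}, \ref{lem3.5} and \ref{lem3.7}), then translate the Crandall--Rabinowitz branch back through \eqref{n3}--\eqref{n4} into \eqref{c1}--\eqref{fin}. Your verification of hypothesis b) by factoring $T$ through the superposition map on $L^\infty$, multiplication into $X_\a$, and the bounded operators $M$ and $P_K$ is in fact more explicit than the paper, which only sketches the continuity of the derivatives and ``skips the tedious details.''
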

\begin{proof}
We apply Theorem \ref{CR} with $X=\widetilde K\times \widetilde Y_{\a}$, $Y=K\times Y_{\a}$ and $F=T(\mu,\phi,\psi)$ as defined in \eqref{3.9}.
We have that $a)$ clearly holds.\\
Concerning $b)$ let us consider a sequence $(\phi_k,\psi_k)\rightarrow(\phi,\psi)$ in $\widetilde K\times \widetilde Y_{\a}$. Then we have that $\phi_k\rightarrow\phi$ and $\psi_k\rightarrow\psi$ in $Y_\a$ and {\em uniformly} in $\R^2$. This is enough to conclude that $T'_\mu$, $T'_{(\phi,\psi)}$ and $T''_{\mu,(\phi,\psi)}$ are continuous. We skip the tedious details.\\
Assumption $c)$ follows by Lemma \ref{lem3.1} and Lemma \ref{lem3.5}.\\
Finally $d)$ follows by Lemma \ref{lem3.7}.\\
From Theorem \ref{CR} we have that for every $n\in \N$ there is a branch of nontrivial radial solutions $(\phi_\e,\psi_\e)$ of $T(\mu,\phi,\psi)=0$ bifurcating from $(\mu_n,0,0)$. Moreover, we know that, for $\e$ small enough,
$(\phi_\e,\psi_\e)=\left(\e z_{1,\e},\e P_n\left(\frac{8-|x|^2}{8+|x|^2}\right)+\e z_{2,\e}\right)$ with $(z_{1,\e},z_{2,\e})$ belonging to a neighborhood of the origin in $\widetilde K \times \widetilde Y_\a $ such that $z_{i,0}=0$ for $i=1,2$.
Then $(\phi_\e,\psi_\e)$  satisfy, for $\e$ small enough,
\begin{equation}
\left\{\begin{array}{ll}
\phi_\e=\e z_{1,\e} \nonumber\\
\psi_\e=\e P_n\left(\frac{8-|x|^2}{8+|x|^2}\right)+\e z_{2,\e}
\end{array}\right.
\end{equation}
with $z_{1,\e},z_{2,\e}$ uniformly bounded in $L^\infty(\R^2)$. Recalling the definition of $\phi$ and $\psi$ in \eqref{n3},  \eqref{n4} we get the claim.
\end{proof}
\sezione{The Lagrange multiplier is zero}\label{s4}
\noindent In this section we will show that the Lagrange multiplier $L_\e$ appearing in  \eqref{c1} is zero.

In the sequel we denote by $C$ a generic constant (independent of $n$) which can change from line to line.\\
Let us start this section proving a bound on $L_\e$.
\begin{lemma}\label{c2}
We have that
\begin{equation}\label{d2}
|L_\e|\le C.
\end{equation}
\end{lemma}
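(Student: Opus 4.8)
The plan is to derive an \emph{explicit formula} for $L_\e$ out of the constraint $\phi_\e\in\widetilde K$ that is built into the construction of Section \ref{s3}, and then to read the bound off from the uniform $L^\infty$‑control on the bifurcating branch supplied by Theorem \ref{t3.2}. First I would set $\phi_\e:=u_\e+v_\e-2U_{\mu_\e}$ and $\psi_\e:=u_\e-v_\e$; by Theorem \ref{t3.2} these are uniformly bounded in $L^\infty(\R^2)$ for $\e$ small (indeed $\|\phi_\e\|_\infty+\|\psi_\e\|_\infty=O(\e)$), and $\phi_\e\in\widetilde K$. Summing the two equations in \eqref{c1}, using $\Delta\phi_\e=\Delta(u_\e+v_\e)-2\Delta U_{\mu_\e}$ with $2\Delta U_{\mu_\e}=-\tfrac{128}{(8+|x|^2)^2}$, and the identities $(2+\mu_\e)e^{U_{\mu_\e}}=\tfrac{64}{(8+|x|^2)^2}$, $e^{u_\e}=e^{U_{\mu_\e}}e^{(\phi_\e+\psi_\e)/2}$, $e^{v_\e}=e^{U_{\mu_\e}}e^{(\phi_\e-\psi_\e)/2}$, one obtains
\[
-\Delta\phi_\e=\frac{64}{(8+|x|^2)^2}\Big(e^{\frac{\phi_\e+\psi_\e}{2}}+e^{\frac{\phi_\e-\psi_\e}{2}}-2\Big)+128\,L_\e\,\frac{8-|x|^2}{(8+|x|^2)^3}\qquad\text{in }\R^2 .
\]
Every term on the right lies in $L^1(\R^2)$: the integrable weight $\tfrac{64}{(8+|x|^2)^2}$ multiplies a quantity bounded by $2e^{(\|\phi_\e\|_\infty+\|\psi_\e\|_\infty)/2}+2$, and $\tfrac{8-|x|^2}{(8+|x|^2)^3}\in L^1(\R^2)$.

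The next step is to multiply this identity by the bounded function $\tfrac{8-|x|^2}{8+|x|^2}$ and integrate over $\R^2$, which the previous observation makes legitimate. On the left I would \emph{not} integrate by parts (as in Lemma \ref{lem3.6}) but simply invoke the definition of $\widetilde K$: since $\phi_\e\in\widetilde K$, one has $\int_{\R^2}(\Delta\phi_\e)\,\tfrac{8-|x|^2}{8+|x|^2}\,dx=0$. Hence
\[
128\,L_\e\int_{\R^2}\frac{(8-|x|^2)^2}{(8+|x|^2)^4}\,dx=-\int_{\R^2}\frac{64}{(8+|x|^2)^2}\Big(e^{\frac{\phi_\e+\psi_\e}{2}}+e^{\frac{\phi_\e-\psi_\e}{2}}-2\Big)\frac{8-|x|^2}{8+|x|^2}\,dx .
\]
The constant $\int_{\R^2}\frac{(8-|x|^2)^2}{(8+|x|^2)^4}\,dx$ is finite and strictly positive (its integrand is nonnegative and not a.e.\ zero), so dividing and estimating crudely gives
\[
|L_\e|\le C\Big(2e^{(\|\phi_\e\|_\infty+\|\psi_\e\|_\infty)/2}+2\Big)\int_{\R^2}\frac{64}{(8+|x|^2)^2}\,dx\le C ,
\]
the last bound being uniform in $\e$ by Theorem \ref{t3.2}. (The same computation in fact shows $L_\e\to0$ as $\e\to0$, although only boundedness is claimed.)

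I do not expect a serious obstacle in this lemma. The only points needing a word of care are the $L^1$‑integrability that makes the test function $\tfrac{8-|x|^2}{8+|x|^2}$ admissible, and the remark that the orthogonality $\int(\Delta\phi_\e)\tfrac{8-|x|^2}{8+|x|^2}\,dx=0$ comes for free from $\phi_\e\in\widetilde K$, so none of the boundary‑term analysis of Lemma \ref{lem3.6} is needed here; also one should keep in mind that the constant ultimately depends only on the uniform $L^\infty$‑bounds for $\phi_\e,\psi_\e$. The genuinely delicate step of Section \ref{s4} — upgrading ``$|L_\e|\le C$'' to ``$L_\e=0$'' — lies elsewhere and will rely on the mass‑quantization identity $\int_{\R^2}e^{u_\e}=\int_{\R^2}e^{v_\e}=\tfrac{8\pi}{2+\mu_\e}$.
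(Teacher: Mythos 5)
Your proof is correct, and it reaches the bound by the same core device as the paper — pairing the equation with $\frac{8-|x|^2}{8+|x|^2}$ so that $L_\e$ appears multiplied by the fixed positive constant $\int_{\R^2}\frac{(8-|x|^2)^2}{(8+|x|^2)^4}\,dx$ — but the execution differs in two genuine ways. The paper tests the \emph{first} equation of \eqref{c1} alone, keeps the term $\int_{\R^2}(-\Delta u_\e)\frac{8-|x|^2}{8+|x|^2}\,dx$, and controls it by Cauchy--Schwarz in the $X_\a$ pairing together with the uniform bound $\|u_\e\|_{Y_\a}\le C$; the remaining term is then handled by the a priori mass bounds $\int_{\R^2}e^{u_\e},\int_{\R^2}e^{v_\e}\le C$ from \eqref{c1}. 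You instead test the \emph{sum} of the two equations after subtracting the background $2U_{\mu_\e}$, which lets you kill the Laplacian term exactly from the defining orthogonality of $\widetilde K$ (note $\Delta\phi_\e\in X_\a\subset L^1$, so the constraint integral is absolutely convergent and your appeal to the definition is legitimate), and you control the nonlinear remainder by the uniform $L^\infty$ bounds on $(\phi_\e,\psi_\e)$ rather than by the mass bounds. Your route buys a slightly stronger conclusion for free — the right-hand side is $O(\e)$, so $L_\e\to 0$ — whereas the paper's version is more self-contained in that it uses only the statement of Theorem \ref{t3.2} (the $Y_\a$ and mass bounds) and not the fact that $\phi_\e$ was constructed inside $\widetilde K$. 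One small bookkeeping point: summing the two equations of \eqref{c1} produces the multiplier term $128L_\e\frac{8-|x|^2}{(8+|x|^2)^3}$, i.e.\ twice the coefficient appearing in \eqref{3.10}; you have this right, and it only rescales the constant.
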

\begin{proof}
Let us multiply the first equation of \eqref{c1} by $\frac{8-|x|^2}{8+|x|^2}$ and integrate over $\R^2$. We get
$$64 L_\e\int_{\R^2}\frac{\left(8-|x|^2\right)^2}{\left(8+|x|^2\right)^4}\, dx=\int_{\R^2}\left(-\Delta u_\e\right)\frac{8-|x|^2}{8+|x|^2}\, dx-\int_{\R^2}\left(2e^{u_\e}+\mu_{\e,n} e^{v_\e}\right)\frac{8-|x|^2}{8+|x|^2}\, dx.$$
We know that $||u_\e||_{Y_\a}\leq ||U_{\mu_{\e,n}} ||_{Y_\a}+C\leq C$ 
since  $\mu_{\e,n}>-2+\delta$ for $\e$ small enough. Then we have that
\begin{equation}
\int_{\R^2}\left(-\Delta u_\e\right)\frac{8-|x|^2}{8+|x|^2}\leq||u_\e||_{Y_\a}\left(\int_{\R^2}\frac 1{1+|x|^{2+\a}}\left(\frac{8-|x|^2}{8+|x|^2}\right)^2\right)^\frac12\leq C.
\end{equation}
Then the uniform bound on $\int_{\R^2}e^{u_\e}\,,\int_{\R^2}e^{v_\e}$ in \eqref{c1} gives that
\begin{equation}
|L_\e|\le C
\end{equation}
which gives the claim.
\end{proof}
\noindent Now we prove some useful estimates.
\begin{proposition}\label{c2-bis}
If $u_\e$ and $v_\e$ satisfy \eqref{c1} we have that
\begin{equation}\label{c3}
\left\{\begin{array}{ll}
u_\e(x)=-\frac1{2\pi}\left(2\int_{\mathbb{R}^2}e^{u_\e}+\mu_{\e,n}\int_{\mathbb{R}^2}e^{v_\e}\right)\log|x|+O(1)
\\ &\hbox{as }|x|\rightarrow+\infty\\
v_\e(x)=-\frac1{2\pi}\left(2\int_{\mathbb{R}^2}e^{v_\e}+\mu_{\e,n}\int_{\mathbb{R}^2}e^{u_\e}\right)\log|x|+O(1)
\end{array}\right.
\end{equation}
\end{proposition}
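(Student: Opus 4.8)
The plan is to read off the asymptotics from the Newtonian potential representation of each equation in \eqref{c1}. Set
$$f_\e:=2e^{u_\e}+\mu_{\e,n}e^{v_\e}+64L_\e\frac{8-|x|^2}{(8+|x|^2)^3},$$
so that $-\Delta u_\e=f_\e$ (classically, by elliptic regularity, since the right-hand side of \eqref{c1} is continuous). First I would check that $f_\e\in X_\a$ and compute its total mass. By Theorem \ref{t3.2} we have $u_\e=U_{\mu_{\e,n}}+\frac12(\phi_\e+\psi_\e)$ and $v_\e=U_{\mu_{\e,n}}+\frac12(\phi_\e-\psi_\e)$ with $\phi_\e,\psi_\e$ uniformly bounded, hence $e^{u_\e},e^{v_\e}\le C(8+|x|^2)^{-2}$; together with $|L_\e|\le C$ (Lemma \ref{c2}) this yields the pointwise decay $|f_\e(x)|\le C(1+|x|)^{-4}$, which makes $(1+|x|^{2+\a})f_\e^2$ integrable, i.e. $f_\e\in X_\a$. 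Moreover a direct computation gives $\int_{\R^2}\frac{8-|x|^2}{(8+|x|^2)^3}\,dx=0$ (for instance because $\frac{64(8-|x|^2)}{(8+|x|^2)^3}=-\Delta\frac{8-|x|^2}{8+|x|^2}$, whose flux at infinity vanishes), so the Lagrange term carries no mass and
$$\int_{\R^2}f_\e(y)\,dy=2\int_{\R^2}e^{u_\e}+\mu_{\e,n}\int_{\R^2}e^{v_\e},$$
which is exactly the coefficient appearing in the first line of \eqref{c3}.

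Next I would prove that $u_\e=M(f_\e)+c_\e$ for some constant $c_\e\in\R$. By Proposition \ref{CIb-properties}(ii) the function $h_\e:=u_\e-M(f_\e)$ is harmonic on all of $\R^2$. On one hand $U_{\mu_{\e,n}}(x)=-4\log|x|+O(1)$ and $\phi_\e,\psi_\e$ are bounded, so $u_\e(x)=O(\log|x|)$; on the other hand Proposition \ref{CIb-properties}(iii) gives $|M(f_\e)(x)|\le C_2\|f_\e\|_{X_\a}(\log^+|x|+1)$. Thus $h_\e$ is an entire harmonic function with $h_\e(x)=O(\log|x|)$, and the interior gradient estimate $|\nabla h_\e(x_0)|\le\frac{2}{R}\sup_{B_R(x_0)}|h_\e|$, letting $R\to+\infty$, forces $\nabla h_\e\equiv0$; hence $h_\e$ is constant.

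Finally I would extract the leading term. Splitting
$$M(f_\e)(x)=-\frac1{2\pi}\log|x|\int_{\R^2}f_\e(y)\,dy-\frac1{2\pi}\int_{\R^2}\log\frac{|x-y|}{|x|}f_\e(y)\,dy,$$
the first summand equals $-\frac1{2\pi}\bigl(2\int_{\R^2}e^{u_\e}+\mu_{\e,n}\int_{\R^2}e^{v_\e}\bigr)\log|x|$ by the mass computation above, and it only remains to show the second integral is $O(1)$. On $\{|y|\le|x|/2\}$ one has $|\log(|x-y|/|x|)|\le\log 2$, so that part is bounded by $\frac{\log 2}{2\pi}\|f_\e\|_{L^1(\R^2)}$; on $\{|y|>|x|/2\}$ the decay $|f_\e(y)|\le C(1+|y|)^{-4}$ makes both $\int_{|y|>|x|/2}|\log|x-y||\,|f_\e(y)|\,dy$ and $\log|x|\int_{|y|>|x|/2}|f_\e(y)|\,dy$ tend to $0$ as $|x|\to+\infty$ (after isolating the integrable singularity on $\{|x-y|\le1\}$, where $|f_\e|\le C|x|^{-4}$). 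This establishes the first line of \eqref{c3}; applying the same argument to the second equation of \eqref{c1} — whose right-hand side integrates to $2\int_{\R^2}e^{v_\e}+\mu_{\e,n}\int_{\R^2}e^{u_\e}$ — yields the second line. The only mildly delicate point is the $O(1)$ control of the remainder integral, but it is routine given the $|y|^{-4}$ decay of $f_\e$; everything else is standard potential theory combined with the information already collected in Theorem \ref{t3.2} and Lemma \ref{c2}.
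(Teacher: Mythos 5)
Your argument is correct, but it takes a genuinely different route from the paper's. The paper exploits radial symmetry directly: it writes \eqref{c1} as an ODE in $r$ and integrates twice, obtaining
$-u_\e(r)+u_\e(0)=\log r\int_0^r\left(2e^{u_\e}+\mu_{\e,n}e^{v_\e}\right)s\,ds-\int_0^r\left(2e^{u_\e}+\mu_{\e,n}e^{v_\e}\right)s\log s\,ds+2L_\e\frac{r^2}{8+r^2}$,
so the Lagrange term is integrated explicitly into the manifestly bounded function $2L_\e r^2/(8+r^2)$, and the decay $e^{u_\e},e^{v_\e}\le C(1+|x|^4)^{-1}$ (your estimate, which is the paper's \eqref{n20}, deduced from \eqref{fin}) closes the argument. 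You instead use the planar Newtonian potential $M$; this costs you one extra step --- identifying $u_\e$ with $M(f_\e)$ up to a constant via Weyl's lemma and a Liouville-type theorem for entire harmonic functions of logarithmic growth --- which the ODE integration avoids because the data at $r=0$ are built into the representation. In exchange, your proof does not use radial symmetry at all, and it isolates cleanly the fact that the Lagrange term carries zero total mass (via $-\Delta\frac{8-|x|^2}{8+|x|^2}=\frac{64(8-|x|^2)}{(8+|x|^2)^3}$ and the vanishing flux at infinity), a fact the paper's computation encodes only implicitly in the boundedness of $2L_\e r^2/(8+r^2)$. Both proofs rest on the same two inputs, namely the pointwise decay of $e^{u_\e},e^{v_\e}$ coming from \eqref{fin} and the bound $|L_\e|\le C$ of Lemma \ref{c2}, and both identify the coefficient of $\log|x|$ as $-\frac1{2\pi}$ times the total integral of the corresponding right-hand side; your handling of the remainder integral in the splitting of $M(f_\e)$ is routine but complete.
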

\begin{proof}
Recalling that $u_\e$ and $v_\e$ are radial, we write the ODE corresponding to \eqref{c1},
\begin{equation}\label{n13}
\left\{\begin{array}{ll}
- u_\e''-\frac1r u_\e'=2e^{u_\e}+\mu_{\e,n} e^{v_\e}+64L_\e
\frac{8-r^2}{\left(8+r^2\right)^3} & \hbox{ in }(0,+\infty)\\
- v_\e''-\frac1r v_\e'=2e^{v_\e}+\mu_{\e,n} e^{v_\e}+64L_\e
\frac{8-r^2}{\left(8+r^2\right)^3} & \hbox{ in }(0,+\infty)
\end{array}\right.
\end{equation}
Let us prove our estimates for $u_\e$ (the case of $v_\e$ is the same). Integrating  \eqref{n13} twice  we get
\begin{eqnarray}\label{n14}
&&-u_\e(r)+u_\e(0)=\int_0^r\left(2e^{u_\e}+\mu_{\e,n} e^{v_\e}\right)sds\log r-
\int_0^r\left(2e^{u_\e}+\mu_{\e,n} e^{v_\e}\right)s\log sds\nonumber\\
&&+2L_\e\frac {r^2}{8+r^2}.
\end{eqnarray}
Then, from \eqref{fin} we deduce that
\begin{equation}\label{n20}
e^{u_\e},e^{v_\e}\le \frac C{1+|x|^4}
\end{equation}
So from  \eqref{n14} we derive
\begin{equation}\label{n21}
-u_\e(r)+u_\e(0)=\left(\int_0^{+\infty}\left(2e^{u_\e}+\mu_{\e,n} e^{v_\e}\right)sds\right)\log r+O(1),
\end{equation}
which gives the claim.
\end{proof}
Next proposition states the important {\em mass quantizaton} of the solutions $u_\e$ and $v_\e$,
\begin{proposition}\label{c4}
We have that
\begin{equation}\label{c5}
\int_{\mathbb{R}^2}e^{u_\e}=\int_{\mathbb{R}^2}e^{v_\e}=\frac{8\pi}{2+\mu_{\e,n}}
\end{equation}
\end{proposition}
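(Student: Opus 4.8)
The plan is to compute the two masses $\int_{\R^2}e^{u_\e}$ and $\int_{\R^2}e^{v_\e}$ via a Pohozaev-type identity together with the asymptotic behaviour already established in Proposition \ref{c2-bis}. First I would introduce the abbreviations $\sigma_1=\frac1{2\pi}\int_{\R^2}e^{u_\e}$ and $\sigma_2=\frac1{2\pi}\int_{\R^2}e^{v_\e}$, so that by Proposition \ref{c2-bis}
\begin{equation}\label{asy-quant}
u_\e(x)=-(2\sigma_1+\mu_{\e,n}\sigma_2)\log|x|+O(1),\qquad v_\e(x)=-(2\sigma_2+\mu_{\e,n}\sigma_1)\log|x|+O(1)
\end{equation}
as $|x|\to+\infty$. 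Multiplying the first equation of \eqref{c1} by $x\cdot\na u_\e$, the second by $x\cdot\na v_\e$, summing, and integrating over $B_R$, I would push $R\to+\infty$. The left-hand side, after integration by parts, produces (by the classical Pohozaev computation for $-\Delta$ in $\R^2$) the boundary term $-\pi(2\sigma_1+\mu_{\e,n}\sigma_2)^2-\pi(2\sigma_2+\mu_{\e,n}\sigma_1)^2$, using \eqref{asy-quant} to evaluate $\lim_{R\to\infty}\int_{\partial B_R}[\,\cdot\,]$; the cross terms involving products of the two radial profiles combine because both behave logarithmically.

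The right-hand side splits into the contribution of the exponential nonlinearities and the contribution of the Lagrange-multiplier term. For the nonlinear part, writing $2e^{u_\e}(x\cdot\na u_\e)=2\,x\cdot\na(e^{u_\e})$ and integrating by parts gives $-4\int_{B_R}e^{u_\e}+(\text{boundary})$, and the boundary term vanishes as $R\to\infty$ by \eqref{n20}; similarly for the mixed terms $\mu_{\e,n}e^{v_\e}(x\cdot\na u_\e)$ one integrates by parts moving the divergence onto $e^{v_\e}$, again with decaying boundary terms. Collecting, the exponential part contributes $-4\pi(2\sigma_1+2\sigma_2)\cdot(\text{something})$; more precisely one obtains a quadratic relation of the form $2(2\sigma_1+\mu_{\e,n}\sigma_2)\sigma_1+\text{sym}= (2+\mu_{\e,n})(\sigma_1+\sigma_2)$-type identities. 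For the term $64L_\e\frac{8-|x|^2}{(8+|x|^2)^3}(x\cdot\na u_\e)$ I would use that $\frac{8-|x|^2}{(8+|x|^2)^3}=-\frac1{128}\na\cdot\big(x\,\frac{1}{(8+|x|^2)^2}\big)$ (or a direct radial computation), integrate by parts once more, and see that this contribution also vanishes in the limit because $u_\e$ grows only logarithmically while the kernel decays like $|x|^{-6}$. Hence $L_\e$ drops out of the Pohozaev identity entirely, which is the whole point.

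Putting the two sides together yields a system of algebraic equations in $(\sigma_1,\sigma_2)$, which — exactly as in the classical single-equation Liouville case — forces $\sigma_1=\sigma_2=\frac4{2+\mu_{\e,n}}$, i.e. $\int_{\R^2}e^{u_\e}=\int_{\R^2}e^{v_\e}=\frac{8\pi}{2+\mu_{\e,n}}$. One clean way to see the symmetry $\sigma_1=\sigma_2$ is to subtract: the function $\psi_\e=u_\e-v_\e$ solves $-\Delta\psi_\e=(2-\mu_{\e,n})(e^{u_\e}-e^{v_\e})$ with no multiplier term, and integrating gives $\frac1{2\pi}\int(-\Delta\psi_\e)=0$ forced by the logarithmic asymptotics of $\psi_\e$ (which by \eqref{fin} is $O(1)$, in fact bounded), so $\int e^{u_\e}=\int e^{v_\e}$; then only the diagonal quantization $(2+\mu_{\e,n})\sigma=4$ remains, which is the Chen--Li value for $-\Delta U=(2+\mu_{\e,n})e^{U}$ recovered from the single Pohozaev identity.

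The main obstacle I expect is the careful justification that all boundary integrals at $|x|=R$ genuinely vanish as $R\to+\infty$: this requires not just the leading-order asymptotics \eqref{asy-quant} but also control of $r u_\e'(r)$ and $r v_\e'(r)$, which one extracts by differentiating the integral representation \eqref{n14} (so that $r u_\e'(r)\to -(2\sigma_1+\mu_{\e,n}\sigma_2)$ and the remainder is $O(r^{-\a/2})$, using $g\in X_\a$-type decay of the nonlinearity from \eqref{n20}). Once those limits are in hand the Pohozaev identity is exact and the quantization \eqref{c5} follows by pure algebra; the subtlety is entirely in the decay bookkeeping, not in any deep new idea.
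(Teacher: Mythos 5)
Your overall plan (asymptotics plus an integral identity) is in the right spirit, and your subtraction argument for $\int e^{u_\e}=\int e^{v_\e}$ is correct, but the central step --- the claim that the Lagrange-multiplier term ``drops out of the Pohozaev identity entirely'' --- is false, and this is a genuine gap. After integrating by parts, the contribution of $64L_\e\frac{8-|x|^2}{(8+|x|^2)^3}\,(x\cdot\na u_\e+x\cdot\na v_\e)$ is a \emph{convergent} bulk integral, not a vanishing one: the Pohozaev identity for \eqref{c1} (Corollary \ref{cor9} with $\tilde R=+\infty$) carries the term $64(\mu_{\e,n}-2)L_\e\int_0^{+\infty}\frac{8-r^2}{(8+r^2)^3}(u_\e'+v_\e')r^2\,dr$, and since $u_\e'+v_\e'\to-\frac{8r}{8+r^2}$ this integral tends to $\int_0^{+\infty}\frac{8-r^2}{(8+r^2)^3}\bigl(-\frac{8r}{8+r^2}\bigr)r^2\,dr=\frac1{12}\neq0$. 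Your logarithmic-growth/decay bookkeeping only shows the integral converges, not that it is zero. Consequently Pohozaev yields one relation coupling the two masses \emph{and} the unknown $L_\e$, and even after imposing $\s_1=\s_2$ it cannot determine the common value. Worse, the argument is circular with respect to the paper's logic: the paper proves \eqref{c5} first, without Pohozaev, and only then uses the Pohozaev identity --- relying precisely on that integral being \emph{nonzero} --- to deduce $L_\e=0$ in Proposition \ref{c11} via \eqref{c14}. If the multiplier term really vanished from the identity, $L_\e=0$ could never be extracted from it.

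The repair is already half-contained in your own text. Set $\s_1=\frac1{2\pi}\int_{\R^2}e^{u_\e}$, $\s_2=\frac1{2\pi}\int_{\R^2}e^{v_\e}$. By \eqref{fin} and the $L^\infty$-bounds on $P_n$ and $z_{i,\e}$ one has $u_\e=U_{\mu_{\e,n}}+O(1)=-4\log|x|+O(1)$ and likewise for $v_\e$; on the other hand the multiplier contributes only the bounded term $2L_\e\frac{r^2}{8+r^2}$ to the representation \eqref{n14}, so it cannot alter the logarithmic coefficient in Proposition \ref{c2-bis}. Matching coefficients of $\log|x|$ in \emph{each} equation (exactly the argument you apply to the difference $u_\e-v_\e$) gives the linear system $2\s_1+\mu_{\e,n}\s_2=4$ and $2\s_2+\mu_{\e,n}\s_1=4$, whose unique solution for $\mu_{\e,n}\neq\pm2$ is $\s_1=\s_2=\frac4{2+\mu_{\e,n}}$, i.e.\ \eqref{c5}. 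No Pohozaev identity is needed at this stage; it should be reserved, as in Section \ref{s4}, for killing $L_\e$ afterwards.
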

\begin{proof}
Using \eqref{fin} and the $L^\infty$-boundedness of $z_1$ and $z_2$ we have that $\left|u_\e(x)+4\log x\right|\le C $ and $\left|v_\e(x)+4\log x\right|\le  C \hbox{ if }|x|\ge R$ for some  $R\in \R$ sufficiently large.
By Proposition \ref{c2-bis} we deduce that
\begin{equation}
\left\{\begin{array}{ll}
-\frac1{2\pi}\left(2\int_{\mathbb{R}^2}e^{u_\e}+\mu_{\e,n}\int_{\mathbb{R}^2}e^{v_\e}\right)+4=0
\\
-\frac1{2\pi}\left(2\int_{\mathbb{R}^2}e^{v_\e}+\mu_{\e,n}\int_{\mathbb{R}^2}e^{u_\e}\right)+4=0
\end{array}\right.
\end{equation}
which gives the claim.
\end{proof}

\begin{proposition}\label{c6}
Let us suppose that $u$ and $v$ are smooth solutions of
\begin{equation}\label{c7}
\left\{\begin{array}{ll}
-\Delta u=2e^u+\mu e^v+H(x) & \hbox{ in }\R^2\\
-\Delta v=2e^v+\mu e^u+H(x) & \hbox{ in }\R^2\\
\end{array}\right.
\end{equation}
with $H\in C^0(\R^2)$.
Then we have the following Pohozaev identity,
\begin{eqnarray}\label{c8}
&&R\mu\int_{\partial B_R}\nabla u\cdot\nabla v-2R\mu\int_{\partial B_R}\frac{\partial u}{\partial\nu}\frac{\partial v}{\partial\nu}-R\int_{\partial B_R}\left(|\nabla u|^2+|\nabla v|^2 \right)+\nonumber\\
&&2R\int_{\partial B_R}\left(\left(\frac{\partial u}{\partial\nu} \right)^2+\left(\frac{\partial v}{\partial\nu} \right)^2\right)+R(4-\mu^2)\int_{\partial B_R}\left(e^u+e^v\right)=
\nonumber\\
&&
2(4-\mu^2)\int_{B_R}\left(e^u+e^v\right)+(\mu-2)\int_{B_R}H(x)(x\cdot\nabla u+x\cdot\nabla v)
\end{eqnarray}
\end{proposition}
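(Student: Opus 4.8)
The plan is to test the system \eqref{c7} against the multiplier adapted to the symmetric coefficient matrix $\bigl(\begin{smallmatrix}2&\mu\\ \mu&2\end{smallmatrix}\bigr)$, whose determinant is $4-\mu^2$. Concretely, I would multiply the first equation by $m_1:=2\,(x\cdot\nabla u)-\mu\,(x\cdot\nabla v)$, the second by $m_2:=-\mu\,(x\cdot\nabla u)+2\,(x\cdot\nabla v)$, add, and integrate over $B_R$. This choice is dictated by the requirement that the exponential terms on the right combine into a perfect gradient: one checks that
\[
(2e^u+\mu e^v)\,m_1+(2e^v+\mu e^u)\,m_2=(4-\mu^2)\bigl(e^u\,x\cdot\nabla u+e^v\,x\cdot\nabla v\bigr)=(4-\mu^2)\,x\cdot\nabla(e^u+e^v),
\]
while the source terms yield $H\,m_1+H\,m_2=(2-\mu)\,H\,(x\cdot\nabla u+x\cdot\nabla v)$.

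On the right-hand side I would then integrate by parts, using $\nabla\cdot x=2$ in $\R^2$:
\[
\int_{B_R}(4-\mu^2)\,x\cdot\nabla(e^u+e^v)\,dx=(4-\mu^2)\Bigl(R\int_{\partial B_R}(e^u+e^v)-2\int_{B_R}(e^u+e^v)\Bigr).
\]
On the left-hand side I split $\int_{B_R}\bigl[(-\Delta u)m_1+(-\Delta v)m_2\bigr]$ into the diagonal part $2\int_{B_R}(-\Delta u)(x\cdot\nabla u)+2\int_{B_R}(-\Delta v)(x\cdot\nabla v)$ and the cross part $-\mu\int_{B_R}\bigl[(-\Delta u)(x\cdot\nabla v)+(-\Delta v)(x\cdot\nabla u)\bigr]$. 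For the diagonal part I apply the classical Rellich--Pohozaev identity, which in dimension $2$ has no interior term, obtaining (since $x=R\nu$ on $\partial B_R$)
\[
\int_{B_R}(-\Delta u)(x\cdot\nabla u)\,dx=-R\int_{\partial B_R}\Bigl(\frac{\partial u}{\partial\nu}\Bigr)^2+\frac R2\int_{\partial B_R}|\nabla u|^2,
\]
and analogously for $v$. For the cross part the key point is that the integrand is a pure divergence,
\[
\Delta u\,(x\cdot\nabla v)+\Delta v\,(x\cdot\nabla u)=\nabla\cdot\bigl[\nabla u\,(x\cdot\nabla v)+\nabla v\,(x\cdot\nabla u)-(\nabla u\cdot\nabla v)\,x\bigr],
\]
which follows from $\nabla(x\cdot\nabla v)=\nabla v+(x\cdot\nabla)\nabla v$ together with $x\cdot\nabla(\nabla u\cdot\nabla v)=\nabla\cdot\bigl((\nabla u\cdot\nabla v)x\bigr)-2\,\nabla u\cdot\nabla v$; hence by the divergence theorem and $x=R\nu$,
\[
\int_{B_R}\bigl[(-\Delta u)(x\cdot\nabla v)+(-\Delta v)(x\cdot\nabla u)\bigr]dx=R\int_{\partial B_R}\nabla u\cdot\nabla v-2R\int_{\partial B_R}\frac{\partial u}{\partial\nu}\frac{\partial v}{\partial\nu}.
\]

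Finally I would assemble the identity, multiply through by $-1$, and move the boundary integral $(4-\mu^2)R\int_{\partial B_R}(e^u+e^v)$ to the left-hand side: the result is exactly \eqref{c8}, the sign $\mu-2=-(2-\mu)$ in front of the $H$-term arising naturally from the source contribution. Since $u,v$ are assumed smooth and $H\in C^0(\R^2)$, all the integrations by parts above are classical on the bounded domain $B_R$, so there is no genuine analytic difficulty; the only steps requiring care are the bookkeeping of the numerous boundary terms and the verification of the divergence identity for the cross term, which is the one computational point I expect to be slightly delicate.
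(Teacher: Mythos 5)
Your derivation is correct: the multipliers $2(x\cdot\nabla u)-\mu(x\cdot\nabla v)$ and $-\mu(x\cdot\nabla u)+2(x\cdot\nabla v)$ chosen via the inverse of the coefficient matrix, the Rellich--Pohozaev identity for the diagonal part (with no interior term in dimension two), and the divergence form of the cross term all check out, and the assembled identity is exactly \eqref{c8}. The paper gives no computation and simply refers to \cite{CK} (see also \cite{JW1}), where this same standard Pohozaev multiplier argument is carried out, so your proof is essentially the one the paper delegates to the references.
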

\begin{proof}
It is the same as in \cite{CK}(see also \cite{JW1})
\end{proof}
\begin{corollary}\label{cor9}
Assume that $H(x)$ is a radial function and that that $u=u(r)$ and $v=v(r)$ are radial solutions of \eqref{c7}.
Let us suppose there exists $\tilde R$ such that $\int_0^{\tilde R} H(r)rdr=0$. Then we have that
\begin{eqnarray}\label{c10}
&&(4-\mu^2)\left[\left(\int_0^{\tilde R}e^urdr\right)^2+\left(\int_0^{\tilde R} e^vrdr\right)^2+\mu\left(\int_0^{\tilde R} e^urdr\right)\left(\int_0^{\tilde R} e^vrdr\right)
\right]+\nonumber\\
&&(4-\mu^2)\tilde{R}^2\left(e^{u(\tilde R)}+e^{v(\tilde R)}\right)=
2(4-\mu^2)\left(\int_0^ {\tilde R}e^urdr+\int_0^ {\tilde R}e^vrdr\right)+\nonumber\\
&&(\mu-2)\int_0^ {\tilde R}H(r)(u'+v')r^2dr.
\end{eqnarray}
\end{corollary}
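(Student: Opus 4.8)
The plan is to specialize the Pohozaev identity \eqref{c8} of Proposition \ref{c6} to the radial situation and then exploit the vanishing hypothesis $\int_0^{\tilde R} H(r)r\,dr=0$ to eliminate the boundary terms involving the gradients. First I would rewrite each boundary integral in \eqref{c8} over $\partial B_{\tilde R}$ using polar coordinates: for a radial function $u=u(r)$ one has $\nabla u=u'(r)\,\frac{x}{|x|}$, so that $\frac{\partial u}{\partial\nu}=u'(\tilde R)$ on $\partial B_{\tilde R}$ and $|\nabla u|^2=(u'(\tilde R))^2$. Consequently the four terms
$$
\tilde R\mu\int_{\partial B_{\tilde R}}\nabla u\cdot\nabla v,\quad -2\tilde R\mu\int_{\partial B_{\tilde R}}\frac{\partial u}{\partial\nu}\frac{\partial v}{\partial\nu},\quad -\tilde R\int_{\partial B_{\tilde R}}\left(|\nabla u|^2+|\nabla v|^2\right),\quad 2\tilde R\int_{\partial B_{\tilde R}}\left(\left(\tfrac{\partial u}{\partial\nu}\right)^2+\left(\tfrac{\partial v}{\partial\nu}\right)^2\right)
$$
combine, using $\int_{\partial B_{\tilde R}} 1 = 2\pi\tilde R$, into
$$
2\pi\tilde R^2\left[\mu u'(\tilde R)v'(\tilde R)-2\mu u'(\tilde R)v'(\tilde R)-\bigl((u'(\tilde R))^2+(v'(\tilde R))^2\bigr)+2\bigl((u'(\tilde R))^2+(v'(\tilde R))^2\bigr)\right]=2\pi\tilde R^2\bigl(u'(\tilde R)-v'(\tilde R)\bigr)^2\cdot\text{(coefficient)},
$$
wait—more carefully, the bracket equals $(u'(\tilde R))^2+(v'(\tilde R))^2-\mu u'(\tilde R)v'(\tilde R)$; I will keep it in that form. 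The point is that these terms are exactly the gradient boundary contributions, and I claim the hypothesis on $H$ forces them to vanish. Indeed, subtracting the two ODEs (the radial form of \eqref{c7}, as in \eqref{n13}) gives $-(u-v)''-\frac1r(u-v)'=(2-\mu)(e^u-e^v)$, while adding them and integrating against $r\,dr$ from $0$ to $\tilde R$, together with $\int_0^{\tilde R}H(r)r\,dr=0$, controls $\tilde R(u'+v')(\tilde R)$; a parallel but slightly different manipulation is needed to see that the specific combination $(u'(\tilde R))^2+(v'(\tilde R))^2-\mu u'(\tilde R)v'(\tilde R)$ appearing above is what the left-hand side of \eqref{c8} reduces to. Rather than chasing cancellations term by term, the cleaner route is: integrate \eqref{c7} over $B_{\tilde R}$ to get $-\tilde R\,u'(\tilde R)=\frac{1}{2\pi}\int_{B_{\tilde R}}(2e^u+\mu e^v)+\frac{1}{2\pi}\int_{B_{\tilde R}}H$, and likewise for $v$; by the hypothesis the $H$-term drops, leaving $\tilde R u'(\tilde R)$ and $\tilde R v'(\tilde R)$ expressed purely in terms of $\int_0^{\tilde R}e^u r\,dr$ and $\int_0^{\tilde R}e^v r\,dr$ (times $2\pi$). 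Substituting these expressions into the boundary terms of \eqref{c8} converts every gradient boundary integral into a quadratic polynomial in $\int_0^{\tilde R}e^u r\,dr$ and $\int_0^{\tilde R}e^v r\,dr$.

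Concretely, write $P:=\int_0^{\tilde R}e^u r\,dr$ and $Q:=\int_0^{\tilde R}e^v r\,dr$, so that $\tilde R u'(\tilde R)=-(2P+\mu Q)$ and $\tilde R v'(\tilde R)=-(2Q+\mu P)$. Then
$$
\tilde R^2\bigl((u'(\tilde R))^2+(v'(\tilde R))^2-\mu u'(\tilde R)v'(\tilde R)\bigr)=(2P+\mu Q)^2+(2Q+\mu P)^2-\mu(2P+\mu Q)(2Q+\mu P),
$$
and expanding this yields $(4+\mu^2)(P^2+Q^2)+4\mu PQ-\mu\bigl(4PQ+2\mu P^2+2\mu Q^2+\mu^2 PQ\bigr)$, which after simplification becomes $(4-\mu^2)(P^2+Q^2)+\mu(4-\mu^2)PQ$ up to the overall constant $2\pi$ coming from $\int_{\partial B_{\tilde R}}1$. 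This is precisely the first bracket on the left side of \eqref{c10}. The term $R(4-\mu^2)\int_{\partial B_{\tilde R}}(e^u+e^v)$ becomes $2\pi\tilde R^2(4-\mu^2)(e^{u(\tilde R)}+e^{v(\tilde R)})$, matching the second term in \eqref{c10}; the volume term $2(4-\mu^2)\int_{B_{\tilde R}}(e^u+e^v)$ becomes $2\cdot 2\pi(4-\mu^2)(P+Q)$, matching the right side; and the last term $(\mu-2)\int_{B_{\tilde R}}H(x\cdot\nabla u+x\cdot\nabla v)$ becomes $(\mu-2)\cdot 2\pi\int_0^{\tilde R}H(r)(u'+v')r^2\,dr$, matching the final term of \eqref{c10}. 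Dividing the whole identity by $2\pi$ gives \eqref{c10} exactly.

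The main obstacle I anticipate is the bookkeeping in reducing the four gradient boundary terms of \eqref{c8} to the single quadratic expression in $u'(\tilde R)$ and $v'(\tilde R)$, and then verifying that, after substituting $\tilde R u'(\tilde R)=-(2P+\mu Q)$ and $\tilde R v'(\tilde R)=-(2Q+\mu P)$, the algebra collapses to $(4-\mu^2)(P^2+Q^2+\mu PQ)$ rather than to some more complicated polynomial — this requires the coefficient $4-\mu^2$ to factor out cleanly, which it does but only after a careful expansion. A secondary point needing care is the justification that $\tilde R u'(\tilde R)$ is genuinely given by the integrated equation over $B_{\tilde R}$ with the $H$-contribution vanishing: this uses only $\int_0^{\tilde R}H(r)r\,dr=0$ together with smoothness of $u,v$ near the origin (so there is no boundary term at $r=0$), both of which are granted by hypothesis. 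Everything else is a direct polar-coordinate translation of \eqref{c8}.
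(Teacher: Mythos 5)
Your proposal is correct and follows essentially the same route as the paper: specialize the Pohozaev identity to radial functions, use $\int_0^{\tilde R}H(r)r\,dr=0$ together with the integrated equations to write $\tilde R u'(\tilde R)=-(2P+\mu Q)$ and $\tilde R v'(\tilde R)=-(2Q+\mu P)$, and expand the resulting quadratic. (Minor slip: in your intermediate expansion the cross term from the two squares should read $8\mu PQ$ rather than $4\mu PQ$; your final simplification $(4-\mu^2)(P^2+Q^2)+\mu(4-\mu^2)PQ$ is nonetheless the correct one.)
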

\begin{proof}
Using that $u$ and $v$ are radial solutions of \eqref{c7}, then \eqref{c8} becomes
\begin{eqnarray}\label{c7-bis}
&&-\mu R^2 u'(R)v'(R)+R^2\left(u'(R)^2+v'(R)^2\right)+(4-\mu^2)R^2\left(e^{u(R)}+e^{v(R)}\right)=\nonumber\\
&&2(4-\mu^2)\int_0^R\left(e^u+e^v\right)rdr+(\mu-2)\int_0^RH(r)(u'+v')r^2dr
\end{eqnarray}
for any $R$.
Integrating  \eqref{c7} and recalling that $\int_0^{\tilde R} H(r)rdr=0$  we get that
\begin{equation}\label{c10-bis}
\left\{\begin{array}{ll}
- \tilde Ru'(\tilde R)=2\int_0^{\tilde R}e^urdr+\mu\int_0^{\tilde R}e^vrdr\\
- \tilde Rv'(\tilde R)=2\int_0^{\tilde R} e^vrdr+\mu\int_0^{\tilde R}e^urdr\\
\end{array}\right.
\end{equation}
Setting $\alpha=\int_0^{\tilde R} e^urdr$, $\beta=\int_0^{\tilde R} e^vrdr$ and recalling \eqref{c5} we have that  \eqref{c7-bis} becomes
\begin{eqnarray}\label{c9}
&&-\mu(2\a+\mu\b)(2\b+\mu\a)+(2\a+\mu\b)^2+(2\b+\mu\a)^2+(4-\mu^2)\tilde R^2\left(e^{u(\tilde R)}+e^{v(\tilde R)}\right)
=\nonumber\\
&& 2(4-\mu^2)(\a+\b)+(\mu-2)\int_0^{\tilde R}H(r)(u'+v')r^2dr,
\end{eqnarray}
which gives the claim.
\end{proof}

\begin{proposition}\label{c11} Let $u_\e$ and $v_\e$ be the solutions of \eqref{c1}.
We have that $L_\e=0$ in \eqref{c1} for $\e$ small enough.
\end{proposition}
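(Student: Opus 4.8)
The plan is to use a Pohozaev-type identity together with the mass quantization of Proposition \ref{c4} to force $L_\e$ to vanish. I would apply the radial Pohozaev identity \eqref{c7-bis} (obtained in the proof of Corollary \ref{cor9} from Proposition \ref{c6}) to the radial solutions $u_\e,v_\e$ of \eqref{c1}, with $\mu=\mu_{\e,n}$ and inhomogeneous term $H_\e(r)=64L_\e\frac{8-r^2}{(8+r^2)^3}$, and then let $R\to+\infty$. All the asymptotic data needed for this limit are at hand: integrating the ODE as in Proposition \ref{c2-bis}, and using $\int_0^{+\infty}sH_\e(s)\,ds=0$ (an elementary computation, valid for every value of $L_\e$) together with $\int_0^{+\infty}s\,(2e^{u_\e}+\mu_{\e,n}e^{v_\e})\,ds=(2+\mu_{\e,n})\frac{4}{2+\mu_{\e,n}}=4$ from Proposition \ref{c4}, one gets $ru_\e'(r)\to-4$ and $rv_\e'(r)\to-4$; and \eqref{n20} yields $R^2(e^{u_\e(R)}+e^{v_\e(R)})\to0$.

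Feeding these limits into \eqref{c7-bis}, the boundary terms on the left converge to $-16\mu_{\e,n}+32=16(2-\mu_{\e,n})$, while the first term on the right converges, again by Proposition \ref{c4}, to $2(4-\mu_{\e,n}^2)\cdot\frac{8}{2+\mu_{\e,n}}=16(2-\mu_{\e,n})$; the remaining integral $\int_0^R H_\e(r)(u_\e'+v_\e')r^2\,dr$ converges since its integrand is $O(|L_\e|r^{-3})$ at infinity. Hence the two bulk terms cancel and we are left with
\[(\mu_{\e,n}-2)\int_0^{+\infty}H_\e(r)\big(u_\e'(r)+v_\e'(r)\big)r^2\,dr=0.\]
Since $\mu_{\e,n}\to\mu_n\le0<2$, for $\e$ small $\mu_{\e,n}\ne2$, so $64L_\e\int_0^{+\infty}\frac{(8-r^2)r^2}{(8+r^2)^3}\big(u_\e'+v_\e'\big)\,dr=0$.

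The decisive step -- and the one I expect to be the main obstacle -- is then to check that the last integral is nonzero for $\e$ small; this is a non-degeneracy statement, namely that the inhomogeneous term $\frac{8-|x|^2}{(8+|x|^2)^3}$ in \eqref{3.10} is not ``orthogonal'' (in this pairing) to the bifurcating solution. Here I would use the branch structure from Theorem \ref{t3.2} to write $u_\e+v_\e=2U_{\mu_{\e,n}}+\phi_\e$ with $\|\phi_\e\|_{L^\infty(\R^2)}\le C\e$. Setting $f(r)=\frac{(8-r^2)r^2}{(8+r^2)^3}$ and using that $U_\mu'(r)=-\frac{4r}{8+r^2}$ is independent of $\mu$, a direct computation gives $2\int_0^{+\infty}f(r)U_{\mu_{\e,n}}'(r)\,dr=-8\int_0^{+\infty}\frac{(8-r^2)r^3}{(8+r^2)^4}\,dr=\frac1{12}$; meanwhile, since $f(0)=0$ and $f(r),f'(r)$ decay like $r^{-2}$ and $r^{-3}$, an integration by parts gives $\big|\int_0^{+\infty}f(r)\phi_\e'(r)\,dr\big|=\big|\int_0^{+\infty}f'(r)\phi_\e(r)\,dr\big|\le\|\phi_\e\|_{L^\infty(\R^2)}\int_0^{+\infty}|f'(r)|\,dr\le C\e$. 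Therefore $\int_0^{+\infty}f(r)\big(u_\e'(r)+v_\e'(r)\big)\,dr=\frac1{12}+O(\e)\ne0$ for $\e$ small, which forces $L_\e=0$. The remaining technical points -- $C^2$-regularity of $u_\e,v_\e$ needed to apply the Pohozaev identity, convergence of the improper integrals, and the bound $\|\phi_\e\|_{L^\infty}\le C\e$ coming from Theorem \ref{t3.2} -- are routine and I would only sketch them.
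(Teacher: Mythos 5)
Your proof is correct and follows essentially the same route as the paper: the radial Pohozaev identity with $\tilde R=+\infty$, cancellation of the bulk terms via the mass quantization of Proposition \ref{c4}, reduction to $L_\e\int_0^\infty\frac{8-r^2}{(8+r^2)^3}(u_\e'+v_\e')r^2\,dr=0$, and non-vanishing of that integral for small $\e$. You are in fact somewhat more explicit than the paper at the last step, computing the limiting value $\tfrac1{12}$ and controlling the error via the $L^\infty$ bound on $\e z_{1,\e}$, where the paper only invokes pointwise convergence of $u_\e',v_\e'$ to $-\frac{4r}{8+r^2}$ and leaves the non-vanishing of the limit integral implicit.
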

\begin{proof}
From \eqref{fin}
we deduce that
\begin{equation}\label{c12}
R^2\left(e^{u_\e(R)}+e^{v_\e(R)}\right)\rightarrow0\hbox{ as }R\rightarrow+\infty.
\end{equation}
Moreover, let us observe that integrating \eqref{c1} between $0$ and $r$ we have, using \eqref{c5},\\
\begin{equation}\label{c15}
|u_\e'(r)r|\le 4+CL_\e
\end{equation}
and since by Lemma \ref{c2} $L_\e$ is uniformly bounded, we derive that $u_\e'(r)$ is bounded for $r$ large enough.\\
Then, we apply the Pohozaev identity \eqref{c8} with $H(x)=64L_\e\frac{8-|x|^2}{\left(8+|x|^2\right)^3}$ as in \eqref{c1} and $\mu=\mu_{\e,n}$. Using that
$\int_0^{+\infty}  \frac{8-r^2}{\left(8+r^2\right)^3}r dr=0$ and that $e^{u_\e}$, $e^{v_\e}$ and $\frac{8-r^2}{\left(8+r^2\right)^3}(u_\e'+v_\e')r^2$ belong to $L^1(\R^2)$,
we can apply \eqref{c7-bis} with $\tilde R=+\infty$ getting that
\begin{eqnarray}\label{c13}
&&(4-\mu_{\e,n}^2)\left[\left(\int_0^\infty e^{u_\e}rdr\right)^2+\left(\int_0^\infty e^{v_\e}rdr\right)^2+\mu_{\e,n}^2\left(\int_0^\infty e^{u_\e}rdr\right)\left(\int_0^\infty e^{v_\e}rdr\right)
\right]=\nonumber\\
&&2(4-\mu_{\e,n}^2)\int_0^\infty \left(e^{u_\e}+e^{v_\e}\right) r dr + 64(\mu_{\e,n}^2-2)L_\e\int_0^\infty\frac{8-r^2}{\left(8+r^2\right)^3}(u_\e'+v_\e')r^2dr.
\end{eqnarray}
Then using the mass quantization property of Proposition \ref{c4} we have that $\int_0^\infty e^{u_\e}rdr=\int_0^\infty e^{v_\e}rdr=\frac{4}{2+\mu_{\e,n}}$. So \eqref{c13} becomes
\begin{equation}\label{c14}
0=L_\e\int_0^\infty\frac{8-r^2}{\left(8+r^2\right)^3}(u_\e'+v'_\e)r^2dr.
\end{equation}

In the previous section we showed that $u_\e,v_\e\rightarrow U_{\mu_n}$ as $\e\rightarrow0$. This implies that $u_\e',v_\e'\rightarrow-\frac{4r}{8+r^2}$ pointwise in $\R$.\\
Passing to the limit as $\e\rightarrow0$ in \eqref{c14} we get that $L_\e$=0 for $\e$ small.
This ends the proof.
\end{proof}
{\bf Proof of Theorem \ref{i6}}\\
It follows by Theorem \ref{t3.2} and Proposition \ref{c11}.

\sezione{A perturbation result: proof of Theorem \ref{thm2}}\label{s5}

The proof of Theorem \ref{thm2} follows simply from a perturbation argument. To this end, we  fix $a_1>0, a_2>0$ and $\mu= -1 + \delta$. Denote $ (U_0, V_0)= (u_{a_1, a_2},v_{a_1, a_2})$. We write $ (u, v)= ( U_0, V_0) + (\phi, \psi)$ and substitute into the system (\ref{1}) to obtain
\begin{equation}
\label{nm1}
\left\{\begin{array}{l}
\Delta \phi + 2e^{U_0} \phi - e^{V_0} \psi = -\mu (e^{V_0+\psi} -e^{V_0} -e^{V_0} \psi ) -(\mu+1) e^{V_0}  -2 (e^{U_0+\phi}-e^{U_0}- e^{U_0}\phi) \\
\Delta \psi + 2e^{V_0} \psi - e^{U_0} \phi = -\mu (e^{U_0+\phi} -e^{U_0} -e^{U_0} \phi ) -(\mu+1) e^{U_0}  -2 (e^{V_0+\phi}-e^{V_0}- e^{V_0}\phi) \\
\phi= \phi (r),\psi= \psi (r)
\end{array}
\right.
\end{equation}

Here we work in the radial class.  As in Section \ref{s2} we consider the invertibility of the linearized operator in some suitable Sobolev spaces.  To this end, recall $X_\alpha$ and $ Y_\alpha$ introduced in Section \ref{s2}. We denote $ X_{\alpha, r}$ and $Y_{\alpha, r}$ as the radial functions in $X_\alpha$ and $Y_\alpha$ respectively.

On $X_{\alpha, r}$ and $Y_{\alpha,r}$, we equip with two norms respectively:
\begin{equation}
\label{norm12}
\| f \|_{**}= \sup_{y \in \R^2} (1+|y|)^{2+\alpha}|f(y)|, \ \ \| h \|_{*}=\max (\| h\|_{Y_\alpha}, \sup_{y \in \R^2} (\log (2+|y|))^{-1} |h(y)|).
\end{equation}

The proof of Theorem \ref{thm2} follows from the following invertibility of the linearized operator (\ref{nm1}) in the radial class and contraction mapping theorem.

\begin{lemma}
\label{lemma2}
Assume that $h=(h_1, h_2) \in X_{\alpha, r}$. Then  one can find a unique solution $ (\phi, \psi)=T^{-1}(h) \in Y_{\alpha, r}$ such that
\begin{equation}\label{e305}
\left\{\begin{array}{c}
\Delta \phi+ 2 e^{U_0}\phi- e^{V_0}\psi=h_1\\
\Delta \psi+ 2 e^{V_0} \psi - e^{U_0} \phi=h_2\\
\phi= \phi (r), \ \  \psi =\psi (r)
\end{array},
\right. \,
\| \phi \|_{*} +\| \psi\|_{*} \leq C ( \| h_1 \|_{**} + \| h_2 \|_{**}) .
\end{equation}
Moreover, the map $ (h_1, h_2) \stackrel{T} {\longrightarrow} (\phi, \psi)$ can be made continuous and smooth.
\end{lemma}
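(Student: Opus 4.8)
The plan is to invert $T$ from \eqref{e305} by regarding it as a $2\times2$ system of second-order ODEs in $r=|x|$ and applying variation of constants, in the same spirit as the analysis of the projected operator in Section~\ref{s3}. From the explicit Jost--Wang solution $(U_0,V_0)=(u_{a_1,a_2},v_{a_1,a_2})$ of \eqref{i5} with $b=c=d=0$ one reads off that $e^{U_0}$ and $e^{V_0}$ are smooth, bounded, and decay like $|x|^{-4}$ at infinity, so the zeroth-order coefficients of $T$ belong to $X_\alpha$ and $T$ is a compact perturbation of the radial Laplacian; the Fredholm alternative then reduces the lemma to constructing the solution explicitly with the quantitative bound and to handling the finite-dimensional degeneracy.

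First I would fix a fundamental system for the homogeneous problem $\Delta(w_1,w_2)=-\bigl(2e^{U_0}w_1-e^{V_0}w_2,\,2e^{V_0}w_2-e^{U_0}w_1\bigr)$. Two independent solutions that are regular at $r=0$ and bounded as $r\to\infty$ are $\partial_{a_1}(U_0,V_0)$ and $\partial_{a_2}(U_0,V_0)$: they lie in the kernel because $(u_{a_1,a_2},v_{a_1,a_2})$ solves \eqref{i4} for every $a_1,a_2>0$, and differentiating \eqref{i5} shows that each component tends to a constant at infinity; the remaining two solutions of the associated first-order system are singular at the origin. Variation of constants against this fundamental matrix yields a particular solution $(\phi,\psi)$ of \eqref{e305}, the free constants being chosen so as to suppress the origin-singular modes, which makes $(\phi,\psi)$ regular at $r=0$; as in Lemma~\ref{lem3.2} the integrand is integrable across the simple zeros of the components of the homogeneous solutions, so that $(\phi,\psi)\in L^\infty_{loc}([0,\infty))$.

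The crux is the quantitative estimate $\|\phi\|_*+\|\psi\|_*\le C(\|h_1\|_{**}+\|h_2\|_{**})$. Since $\|h\|_{**}=\sup_y(1+|y|)^{2+\alpha}|h(y)|$, one has $|h(y)|\le\|h\|_{**}(1+|y|)^{-2-\alpha}$, so the variation-of-constants integrals converge absolutely; moreover, because the bounded homogeneous solutions tend to nonzero constants, the relevant reduction-of-order weight behaves like $\frac{1}{r}\bigl(1+O((1+r)^{-3})\bigr)$ as in \eqref{comp-asint}, and the integration-by-parts argument of Lemmas~\ref{n5} and~\ref{n10} then shows that $(\phi,\psi)$ grows at most like $\log r$, with the coefficient of $\log r$ bounded by $\int_0^{\infty}|h|\,r\,dr\lesssim\|h\|_{**}$. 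Since $h$ is bounded, this places $(\phi,\psi)$ in $Y_{\alpha,r}$ with $\|\phi\|_*+\|\psi\|_*\lesssim\|h_1\|_{**}+\|h_2\|_{**}$, exactly as in Lemma~\ref{N1} and Corollary~\ref{lem3.3}.

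Finally, the solution is determined only up to the two bounded homogeneous modes $\partial_{a_1}(U_0,V_0)$ and $\partial_{a_2}(U_0,V_0)$ --- the degeneracies coming from the scaling invariance of \eqref{i4} and from the shape parameter of the Jost--Wang family --- which span $\ker T$ in $Y_{\alpha,r}$. As in Section~\ref{s3} I would remove them by fixing a closed complement $Z$ of $\ker T$, for instance the functions orthogonal in $X_\alpha$ to $e^{U_0}\partial_{a_i}U_0+e^{V_0}\partial_{a_i}V_0$, $i=1,2$, and taking $T^{-1}h$ to be the unique solution lying in $Z$; the estimate then survives the bounded projection, and since $T^{-1}h$ is built from absolutely convergent integrals depending smoothly on $h$, the map $h\mapsto T^{-1}h$ is continuous and $C^\infty$. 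The main obstacle is the simultaneous control of the variation-of-constants solution in both ingredients of $\|\cdot\|_*$ --- the $Y_\alpha$-norm and the $(\log(2+|y|))^{-1}$-weighted supremum --- near the zeros of the fundamental solutions and at infinity; once that is in place the remaining points are bookkeeping patterned on Section~\ref{s3}.
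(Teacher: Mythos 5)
Your proposal and the paper part ways at the very first step: the paper's entire proof of Lemma \ref{lemma2} is the single sentence that it ``follows directly from Lemma 2.3 of \cite{ALW} by restricting to the radial class,'' so any self-contained argument is by construction a different route. Your architecture --- identify the bounded radial kernel elements $\partial_{a_1}(U_0,V_0)$, $\partial_{a_2}(U_0,V_0)$, build a particular solution by variation of constants, estimate it in the weighted norms, and restore uniqueness by working in a complement of the kernel --- is a reasonable skeleton, and you correctly spot something the statement glosses over: since the two parameter-derivatives of the Jost--Wang family \eqref{i5} are bounded radial elements of $Y_{\alpha,r}$ lying in $\ker T$, the solution cannot literally be unique in $Y_{\alpha,r}$; uniqueness only holds after fixing a complement (which is how the cited lemma of \cite{ALW} is actually formulated, via orthogonality conditions). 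That is a genuine gain in precision over the statement as written.

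The gap is in the quantitative step. You justify the estimate $\|\phi\|_*+\|\psi\|_*\le C(\|h_1\|_{**}+\|h_2\|_{**})$ by appealing to \eqref{sol}, \eqref{comp-asint}, Lemmas \ref{n5}, \ref{n10}, \ref{N1} and Corollary \ref{lem3.3}. But all of those results concern a \emph{scalar} second-order ODE, obtained in Sections \ref{s2}--\ref{s4} by the sum/difference substitution, which decouples the system precisely because there the background is symmetric, $U_\mu=V_\mu$. Here $\mu=-1$ and $U_0\neq V_0$, so the coefficient matrix $\bigl(\begin{smallmatrix}2e^{U_0}&-e^{V_0}\\-e^{U_0}&2e^{V_0}\end{smallmatrix}\bigr)$ cannot be diagonalized by a constant linear change of unknowns, and the reduction-of-order formula \eqref{sol} (one bounded homogeneous solution, one explicit weight $1/(sP_n^2)$) has no direct analogue. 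Variation of constants now requires the full $4\times 4$ fundamental matrix of the associated first-order system, and the weighted estimate hinges on the asymptotics of its \emph{inverse}, i.e., on controlling all four independent homogeneous solutions at $r=0$ and $r=\infty$, including the two logarithmically growing ones, not just the two bounded kernel elements you exhibit. None of that is supplied, and the phrase ``near the zeros of the fundamental solutions'' suggests you are still picturing the scalar mechanism: the Wronskian of the full system never vanishes, so the real difficulty is not interior zeros but the matrix asymptotics in both ingredients of the $\|\cdot\|_*$ norm. This missing estimate is exactly the content of Lemma 2.3 of \cite{ALW} (established there by an a priori estimate argument rather than explicit integration), so what is absent is the heart of the lemma rather than bookkeeping.
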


\begin{proof} This follows directly from Lemma 2.3 of \cite{ALW}  by restricting  to the radial class. We omit the details.
\end{proof}


\begin{thebibliography}{99}



\bibitem[ALW]{ALW} {\sc W. Ao, C.S. Lin, J. Wei}, On non-topological solutions of the $A_2$ and $B_2$ Chern-Simons system, Memoirs of American Mathematical Society, to appear.


\bibitem[CI]{CI} {\sc D. Chae, O. Imanuvilov} The existence of non-topological multivortex solutions in the relativistic self-dual Chern-Simons theory, Communications in Mathematical Physics, 215, (2000), 119-142.

\bibitem[CK]{CK} {\sc S. Chanillo, M.K.-H Kiessling}, Conformally Invariant Systems of Nonlinear
PDE of Liouville Type, Geom. Funct. Anal. 5.6 (1995), 924-947.


\bibitem[CL1]{CL1} {\sc W. Chen, C. Li}, Classification of solutions of some nonlinear elliptic equations, Duke Math. J, 63, 615-622.

\bibitem[CSW]{CSW} {\sc M. Chipot, I. Shafrir, G. Wolansky}, On the solutions of Liouville systems, J. Diff. Eqns 140 (1997), 59-105.
\bibitem[CR]{CRJFA}{\sc M. Crandall, P. Rabinowitz}, Bifurcation from simple eigenvalues, J. Funct. Anal. 8 (1971) 321-340.
\bibitem[GG]{GG} {\sc F. Gladiali, M. Grossi}, On the spectrum of a nonlinear planar problem, Ann. I. H. Poincar\'e, 26 (2009), 191-222.

\bibitem[GGN]{GGN} {\sc F. Gladiali, M. Grossi, S. Neves}, Nonradial solutions for the H\'enon
equation in $R^N$, Adv. Math., 20 (2013), 1-36.
\bibitem[HLTY]{HLTY}{\sc X. Han, C.S. Lin, G. Tarantello, Y. Yang}, Chern Simons Vortices in the Gudnason Model, preprint Arxiv, http://arxiv.org/abs/1307.6951.
\bibitem[JW1]{JW1} {\sc J. Jost, G. Wang}, Analytic aspects of the Toda system I: A Moser-Trudinger inequality, Comm. Pure Appl. Math., 54 (2001), 1289-1319 .

\bibitem[JW2]{JW2} {\sc J. Jost, G. Wang}, Classification of solutions of a Toda system in $R^2$,
Int. Math. Res. Not. (2002), 277-290.

\bibitem[LWY]{LWY}{\sc C.S. Lin, J. Wei, D. Ye}, Classification and nondegeneracy of $SU(n+1)$ Toda system with singular sources, Inventiones Mathematicae 190, (2012), 169-207.



   \bibitem[LZ1]{LZ1}{\sc C.S. Lin, L. Zhang},  Profile of bubbling solutions to a Liouville system, Ann. Inst. H. Poincar\'e Anal. Non Lin\'eaire 27 (2010), 117-143.


\bibitem[LZ2]{LZ2}  {\sc C.S. Lin, L. Zhang}. A topological degree counting for some Liouville systems of mean field type, Comm. Pure Appl. Math. 64 (2011), 556-590.


\bibitem[R]{R}{\sc P.H. Rabinowitz}, Some global results for nonlinear eigenvalue problems, J. Functional Analysis 7 (1971), 487-513.

\bibitem[WZZ]{WZZ} {\sc J. Wei, C.Y. Zhao and F. Zhou}, On nondegeneracy of solutions of $SU(3)$-Toda system, CRAS 349(2011), 185-190.


 \end{thebibliography}
\end{document}